\newtheorem{theoremintro}{Theorem}
\newtheorem{theorem}{Theorem}[section] 
\newtheorem{proposition}[theorem]{Proposition} 
\newtheorem{lemma}[theorem]{Lemma}
\theoremstyle{definition}
\newtheorem{definition}[theorem]{Definition}
\newtheorem{example}[theorem]{Example}
\newtheorem{remark}[theorem]{Remark}
\DeclareMathOperator{\initial}{in}
\DeclareMathOperator{\interior}{int}
\DeclareMathOperator{\conv}{conv}
\DeclareMathOperator{\supp}{supp}
\DeclareMathOperator{\diag}{diag}
\DeclareMathOperator{\relint}{relint}
\DeclareMathOperator{\Newt}{Newt}
\DeclareMathOperator{\Log}{Log}
\DeclareMathOperator{\Exp}{Exp}
\newcommand{\ZZ}{\mathbb{Z}}
\newcommand{\NN}{\mathbb{N}}
\newcommand{\QQ}{\mathbb{Q}}
\newcommand{\RR}{\mathbb{R}}
\newcommand{\CC}{\mathbb{C}}
\newcommand{\PP}{\mathbb{P}}
\newcommand{\cC}{\mathcal{C}}
\newcommand{\cox}{\mathrm{cox}}
\title{Toric extensions of Pólya's theorem}
\author{Lorenzo Baldi, Rainer Sinn, Máté L. Telek, Julian Weigert}
\date{November 2025}
\subjclass[2020]{Primary 14M25, 14P99; Secondary 81Q30.}
\address{Universit\"at Leipzig and Max Plank Institute for Mathematics in the Sciences, Leipzig, Germany}
\begin{document}

\begin{abstract}
The classical version of Pólya's theorem provides a simple method for certifying that a homogeneous polynomial of degree $d$ is strictly copositive, that is, it takes only positive values on the nonnegative real orthant. However, this method might fail to detect copositivity of polynomials that are missing certain degree $d$ monomials. In this paper, we present extensions and converses to Pólya's theorem for sparse polynomials, using techniques from positive toric geometry. Furthermore, we explore how this method can be used to study the convergence of Feynman integrals in particle physics.
\end{abstract}

\maketitle

\section{Introduction}
Certifying the nonnegativity of polynomials has a rich history in real algebraic geometry, dating back at least to Hilbert's 17th problem \cite{Hilbert}. Hilbert asked whether every globally nonnegative polynomial can be represented as a sum of squares of rational functions --- a question that was answered affirmatively by Artin \cite{Artin}. Around the same time, a similar representation theorem was established by Pólya \cite{Polya}. Pólya proved that for every \emph{strictly copositive} form $f \in \RR[t_1,\dots,t_n]$ --- that is, every homogeneous polynomial $f$ that takes only positive values on $\RR^n_{\geq0}\setminus \{0\}$ --- there exists an integer $N \in \NN$ such that $(t_1+\dots+t_n)^Nf$ has positive coefficients.

The term \emph{copositive} traces back to the work of Motzkin \cite{Motzkin}. Since Pólya and Motzkin, copositive polynomials have remained an active area of research in optimization \cite{BomzeDur,Dur}. It is known that deciding membership in the cone $\cC_{n,d}$ of copositive polynomials in $n$ variables of degree $d$ is NP-hard in general, and that $\cC_{n,2}$ is not a spectrahedral shadow for $n \geq 5$ \cite{BodirskyKummerThom}. For a detailed overview on copositivity, we refer the reader to \cite{Vargas,Parrilo} and the references therein.

For a strictly copositive $f$, lower bounds on the smallest exponent $N$ such that $(t_1+ \dots + t_n)^N f$ has positive coefficients were provided in \cite{PowersReznick,deLoeraSantos}.
Having  positive coefficients for $(t_1+ \dots + t_n)^N f$ certifies that $f \in \cC_{n,d}$.
However, such a Pólya representation does not imply that $f$ is strictly copositive, or equivalently that $f \in \interior(\cC_{n,d})$.
The question of which copositive polynomials admit a Pólya representation has been studied in several works \cite{PowersReznick2006,CastlePowersReznick2009,MokTo,Burgdorf2012}, and a full characterization was given in~\cite{CastlePowersReznick}.

In this paper, we take a sparse approach to copositivity and consider the \emph{sparse copositive cone} $\cC_{k \cdot A}$, defined as the cone of nonnegative polynomials on $\RR^n_{>0}$ whose support is contained in the $k$-fold Minkowski sum $k\cdot A= A + \dots +A$ for a fixed finite set $A \subseteq \ZZ^n$. To derive a Pólya-type certificate for such sparse polynomials, we apply methods from (positive) toric geometry and interpret the polynomials as homogeneous forms on the affine cone $Y_{\hat{A}} \subseteq \CC^m$ over the projective toric variety $X_A \subseteq \mathbb{P}^{m-1}$ associated to the set $A$ (see Section~\ref{Sec:Background} for more details). We say that $f$ is \emph{strictly $A$-copositive} if the induced function is positive on $Y_{\hat{A}} \cap \RR^m_{\geq0}\setminus\{0\}$, or equivalently if $f$ lies in the interior of the sparse copositive cone $\cC_{k\cdot A}$.
Our first main result says that, by modifying the multiplier in Pólya’s theorem, one can certify strict $A$-copositivity.
\begin{theoremintro}[{see Theorem~\ref{Thm:SparsePolya}}]
\label{Thm:Main}
Let $A \subseteq \ZZ^n$ be a finite set and $k \in \NN$.
Let $f$ be a (Laurent) polynomial whose support is contained in $k \cdot A$.
Then $f$ is strictly $A$-copositive if and only if there exists $N \in \NN$ such that $(\sum_{a \in A}t^a)^N f$ has nonnegative coefficients and $\Newt(f) = \conv(k \cdot A)$.
\end{theoremintro}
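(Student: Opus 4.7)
The plan is to transport the statement to the affine toric variety $Y_{\hat A}$ and treat the two implications separately. Write $\hat A = A\times\{1\}$, $m=|A|$, and let $\phi\colon t\mapsto (t^a)_{a\in A}$. A Laurent polynomial $f$ with $\supp(f)\subseteq k\cdot A$ corresponds to a class of homogeneous degree-$k$ forms $F\in\RR[x_a]_{a\in A}$ modulo the toric ideal $I(Y_{\hat A})$, satisfying $f(t)=F(\phi(t))$. Under this dictionary $\sum_{a\in A} t^a$ pulls back the linear form $\ell:=\sum_a x_a$, and strict $A$-copositivity of $f$ is by definition strict positivity of $F$ on $(Y_{\hat A}\cap\RR^m_{\geq 0})\setminus\{0\}$.

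For the $(\Leftarrow)$ direction I would induct on the dimension of the faces of $\conv(A)$. Assume the Pólya identity and $\Newt(f)=\conv(k\cdot A)$. At any $\phi(t)$ with $t\in\RR^n_{>0}$ we have $(\sum t^a)^N>0$, so $f(t)\geq 0$; if $f(t_0)=0$ at some $t_0>0$, each nonnegative term of the expansion of $(\sum t^a)^N f$ must vanish at $t_0$, forcing all its coefficients to be zero and hence $f\equiv 0$, contradicting the Newton polytope assumption. Boundary points of $Y_{\hat A}\cap\RR^m_{\geq 0}$ sit on torus orbits indexed by proper faces $F\subsetneq\conv(A)$; restricting $f$ to such an orbit gives the face polynomial $f_F$ supported on $k\cdot(A\cap F)$. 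Both hypotheses pass to the face: the Pólya identity truncates to $(\sum_{a\in A\cap F} t^a)^N f_F$ with nonnegative coefficients (no cross terms contribute because $k\cdot F$ is a face of $\conv(k\cdot A)$), and $\Newt(f_F)=\conv(k\cdot(A\cap F))$ follows from the identity $k\cdot F=\conv(k\cdot(A\cap F))$ for faces $F$ of $\conv(A)$. Induction on $\dim F$ then yields strict positivity on every boundary orbit.

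The $(\Rightarrow)$ direction is the hard part. Necessity of the Newton polytope condition is immediate: each vertex $kv$ of $\conv(k\cdot A)$ corresponds to a torus-fixed point of $X_A$ at which $F$ evaluates (up to a positive scalar) to the coefficient of $t^{kv}$ in $f$, so omitting this monomial would violate strict $A$-copositivity. To produce the Pólya multiplier I would adapt the concentration argument behind the classical Pólya theorem (as refined by Powers--Reznick and de Loera--Santos) to the toric setting. Expanding
\[
\Bigl(\sum_{a\in A} t^a\Bigr)^N f(t) \;=\; \sum_{c\in(N+k)\cdot A}\gamma_{c,N}\,t^c,
\]
each normalized coefficient $\gamma_{c,N}$ is a positively weighted average of values of $F$ at lattice approximants to the point $c/(N+k)\in\conv(A)$ (viewed via a section of the moment map), and strict $A$-copositivity combined with compactness of the positive part of $X_A$ provides a uniform lower bound $F\geq\epsilon>0$ there.

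The main technical obstacle is making this lower bound uniform in $c$ as $c/(N+k)$ approaches the boundary of $\conv(A)$, where the evaluation points drift toward boundary strata of $Y_{\hat A}\cap\RR^m_{\geq 0}$ and the multinomial weights degenerate. I would address this by stratifying the multi-indices $c$ according to which face of $\conv(k\cdot A)$ contains $c/(N+k)$ in its relative interior and running the concentration estimate face by face. On each face the problem reduces to the interior case on a lower-dimensional positive toric variety, using the already-established necessity of the Newton polytope condition to guarantee that the restricted problem is genuinely nondegenerate, so that induction on $\dim\conv(A)$ closes the argument.
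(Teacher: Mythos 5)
Your ``if'' direction is sound and is essentially the argument the paper uses: truncating the identity $(\sum_{a\in A}t^a)^N f$ to a face of $\conv((N+k)\cdot A)$ and using that the truncation of a product is the product of truncations (multiplicativity of initial forms) reduces positivity on each boundary orbit of $(Y_{\hat A})_{\geq 0}$ to the interior case, and the hypothesis $\Newt(f)=\conv(k\cdot A)$ guarantees the restricted polynomials are nonzero. Your observation that the Newton polytope condition in the ``only if'' direction follows from evaluating at the torus-fixed points is also correct (and slightly more direct than the paper's derivation).

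The genuine gap is in the ``only if'' direction, which is the substantive half of the theorem. Your plan is a quantitative concentration argument modelled on Powers--Reznick, but the key claim --- that each coefficient $\gamma_{c,N}$ of $(\sum_{a\in A}t^a)^N f$ is (approximately) a positive multiple of $f$ evaluated at a point of $\RR^n_{>0}$ attached to $c/(N+k)$ --- is asserted, not proved, and it does not generalize routinely. The coefficient is $\sum_{b\in k\cdot A}c_b\,r_N(c-b)$, where $r_N(v)$ counts ordered representations of $v$ as a sum of $N$ elements of $A$; in the classical case $A=\{e_1,\dots,e_n\}$ these are multinomial coefficients with an exact product formula, and the whole proof rests on the resulting falling-factorial identity being uniform up to the boundary of the simplex. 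For arbitrary finite $A\subseteq\ZZ^n$ no such formula exists: turning $r_N$ into useful weights requires uniform local limit/lattice-point asymptotics for the coefficients of $(\sum_{a\in A}t^a)^N$, including sublattice and non-saturation issues, and these degenerate precisely in the regime you flag. Your proposed fix --- stratify by the face of $\conv(k\cdot A)$ containing $c/(N+k)$ in its relative interior and induct on dimension --- only handles exponents lying \emph{exactly} on proper faces (where indeed all decompositions of $c$ are forced into the face); it says nothing about interior exponents whose normalizations drift toward a face as $N\to\infty$, which is exactly the boundary layer where the concentration estimate must still be made uniform and where the classical argument leans on the explicit multinomial identity. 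So as written the argument does not close. The paper avoids all quantitative estimates by a soft route: it applies Krivine's Representation Theorem to the Archimedean preprime $T=\RR_{\geq0}[t^{a_1},\dots,t^{a_m}]+\langle 1-\sum_{i=1}^m t^{a_i}\rangle$ on the affine toric variety $Y_{\hat A}$ (strict $A$-copositivity gives strict positivity on the compact set $\mathcal{K}(T)$), and then converts the resulting representation $f=g+h\,(1-\sum_i t^{a_i})$ into a Pólya certificate by the substitution $t^{a_i}\mapsto t^{a_i}/\sum_j t^{a_j}$ and clearing denominators, using the degree-$k$ homogeneity of $f$ as a function on $Y_{\hat A}$. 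If you want to salvage your quantitative approach, you would need to prove the uniform asymptotics for $r_N$ near faces of $\conv(A)$, which is a serious piece of work in itself and is not supplied by citing the classical Pólya bounds.
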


When $A$ consists of the standard basis vectors in $\RR^n$, the only if part of Theorem~\ref{Thm:Main} specializes to the classical version of Pólya’s theorem. In fact, to prove this result, we generalize the proof of Pólya's Theorem given in \cite[Theorem 5.5.1]{Marshall}, \cite[Theorem 5.4.1]{Scheiderer2024} and apply the Representation Theorem by Krivine~\cite{Krivine} combined with results from toric geometry.

Similar generalizations of Pólya's theorem have appeared in the context of exponential sums. 
Note that questions about nonnegativity of exponential sums of the form $\sum_{a \in A} c_{a} \exp(a \cdot z)$  can be translated into questions about the nonnegativity of the corresponding polynomial  $\sum_{a \in A} c_{a} t^a$  (possibly with real exponents) in the positive real orthant, via a logarithmic change of variables.
For example, by \cite[Theorem 4.1]{ChandrasekaranShah}, we have the following result.
Let $A \subseteq \QQ^n$ be such that $\conv(A)$ is a simplex, and every $a \in A$ is either a vertex of the simplex or lies in its interior.
Then, for every $f = \sum_{a \in A} c_{a} t^a$ that is positive on $\RR^n_{>0}$, there exists an $N \in \NN$ such that $(\sum_{a \in A}t^a)^Nf$ admits a SAGE decomposition -- that is, it can be written as a sum of nonnegative polynomials, each with at most one negative coefficient.
Theorem~\ref{Thm:Main} generalizes this result by removing the combinatorial assumptions on $\conv(A)$ and strengthening the conclusion.

Similar results appear in \cite[Theorem 7]{WangJainiYuPoupart} and \cite[Theorem 4.1]{DresslerMurray} for nonnegativity of polynomials on relatively compact sets $X \subseteq \RR^n_{>0}$. Notably, \cite[Theorem 4.1]{DresslerMurray} also holds for polynomials with real exponents, i.e., $A \subseteq \RR^n$. 
While Theorem~\ref{Thm:Main} can be directly extended to polynomials with rational exponents, it was shown in~\cite{Delzell} that a Pólya-type certificate does not exists for polynomials with irrational exponents.

In the second part of the paper, we investigate a new approach for certifying copositivity, based on the \emph{Cox ring} of the toric variety $X_A$, as an alternative to Theorem~\ref{Thm:Main}. 
We show that $f$ is strictly $A$-copositive if and only if its \emph{Cox homogenization} $f_\cox$ is nonnegative on the nonnegative real orthant and its zeros are given precisely by the \emph{irrelevant ideal} (Proposition~\ref{prop:interiorCAcox}).
The variables of $f_\cox$ correspond to the rays in the inner normal fan $\Sigma_A$ of $\conv(A)$.
Under the assumption that $\conv(A)$ is a product of simplices, we show that both the \emph{primitive collection of rays} and the generators of the irrelevant ideal give rise to a Pólya-type certificate of copositivity.
\begin{theoremintro}[{see Theorem~\ref{prop:polyaInCox_standard}}]
\label{Prop:Intro} 
   Let $A \subseteq \ZZ^n$ be a finite set such that $\conv(A) = \Delta_1 \times \dots \times \Delta_k$ is a product of full-dimensional dilated standard simplices. Denote by $C_1,\dots,C_k \subseteq [r]$ the primitive collections of rays in $\Sigma_A$, and write $\Sigma_A(n)$ for the set of its maximal cones. For a (Laurent) polynomial $f$, whose support is contained in $A$, the following are equivalent:
\begin{enumerate}
    \item[(i)] $f$ is strictly $A$-copositive;
    \item[(ii)]  $\exists N_1,\ldots,N_k\in \NN\colon\,\prod_{j=1}^k\left(\sum_{i\in C_j}x_i\right)^{N_j }f_\cox \in \RR_{\geq0}[x_1,\dots,x_r]$ and $\Newt(f) = \conv(A)$;
    \item[(iii)] $\exists N\in \NN\colon \, \left(  \sum_{\sigma \in \Sigma_A(n)} \prod_{i \notin \sigma} x_i  \right)^N f_\cox  \in \RR_{\geq0}[x_1,\dots,x_r]$ and $\Newt(f) = \conv(A)$.
\end{enumerate}
\end{theoremintro}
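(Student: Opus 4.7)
My plan is to prove the theorem via the equivalences $(\text{i}) \Leftrightarrow (\text{ii})$ and $(\text{ii}) \Leftrightarrow (\text{iii})$, using Proposition~\ref{prop:interiorCAcox} as the bridge from strict $A$-copositivity of $f$ to positivity properties of its Cox homogenization $f_\cox$. The equivalence $(\text{ii}) \Leftrightarrow (\text{iii})$ is purely combinatorial: since $\conv(A) = \Delta_1 \times \dots \times \Delta_k$ is a product of simplices, every maximal cone $\sigma \in \Sigma_A(n)$ omits exactly one ray from each primitive collection $C_j$, so $\prod_{i \notin \sigma} x_i$ is a monomial $\prod_{j=1}^k x_{i_j}$ with $i_j \in C_j$, and summation over $\sigma$ factorises as
\[
\sum_{\sigma \in \Sigma_A(n)} \prod_{i \notin \sigma} x_i \;=\; \prod_{j=1}^k \sum_{i \in C_j} x_i.
\]
Thus (iii) is (ii) with $N_1 = \cdots = N_k = N$, and from (ii) one obtains (iii) with $N := \max_j N_j$ by multiplying by the nonnegative-coefficient polynomial $\prod_j (\sum_{i \in C_j} x_i)^{N - N_j}$.

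For $(\text{ii}) \Rightarrow (\text{i})$, multihomogeneity of $f_\cox$ of multi-degree $(d_1, \ldots, d_k)$ with all $d_j \geq 1$ forces $f_\cox$ to vanish on the irrelevant locus $\bigcup_{j=1}^k \{x \in \RR^r_{\geq 0} : x_i = 0 \text{ for all } i \in C_j\}$. To get strict positivity off this locus, I fix $x \in \RR^r_{\geq 0}$ with $x_{i_j} > 0$ for some $i_j \in C_j$, for every $j$. The monomial $\prod_j x_{i_j}^{N_j + d_j}$ has support $\{i_1, \ldots, i_k\}$; by the multihomogeneous structure it can only arise in $\prod_j (\sum_{i \in C_j} x_i)^{N_j} f_\cox$ from $\prod_j x_{i_j}^{N_j}$ (from the multiplier) paired with the vertex monomial $\prod_j x_{i_j}^{d_j}$ (from $f_\cox$). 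Its coefficient is therefore the coefficient $c$ of the corresponding vertex of $\conv(A)$ in $f$, which is nonnegative by hypothesis and nonzero by $\Newt(f) = \conv(A)$, hence strictly positive. This yields $[\prod_j (\sum_{i \in C_j} x_i)^{N_j} f_\cox](x) \geq c \prod_j x_{i_j}^{N_j + d_j} > 0$, and dividing by the strictly positive multiplier gives $f_\cox(x) > 0$. Proposition~\ref{prop:interiorCAcox} then delivers (i).

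The main content is $(\text{i}) \Rightarrow (\text{ii})$, a multigraded version of Pólya's theorem. By Proposition~\ref{prop:interiorCAcox} and multihomogeneity, strict $A$-copositivity is equivalent to $f_\cox > 0$ on the compact product of standard simplices $\Delta := \prod_{j=1}^k \{y \in \RR^{C_j}_{\geq 0} : \sum_{i \in C_j} y_i = 1\}$. The representation sought then reduces to the following multigraded statement: if $F \in \RR[x_1, \ldots, x_r]$ is multihomogeneous of multi-degree $(d_1, \ldots, d_k)$ with $F > 0$ on $\Delta$, there exist $N_1, \ldots, N_k$ making $\prod_j (\sum_{i \in C_j} x_i)^{N_j} F$ have nonnegative coefficients. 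I would establish this by adapting the Krivine-based proof of Theorem~\ref{Thm:SparsePolya} to the product geometry of $\Delta$, or equivalently by iterating the classical single-simplex Pólya theorem one block at a time while using compactness of the remaining factors to choose uniform exponents. The central obstacle is exactly this multigraded representation with uniform exponents: it is the place where the product-of-simplices structure of $\conv(A)$ is used in an essential way, and it is what makes the Cox-ring multipliers in (ii) and (iii) combinatorially simpler than the sparse multiplier $\sum_{a \in A} t^a$ of Theorem~\ref{Thm:Main}.
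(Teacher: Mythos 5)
The parts you actually carry out are correct. Your (ii)$\Leftrightarrow$(iii) is the same factorization $\sum_{\sigma\in\Sigma_A(n)}\prod_{i\notin\sigma}x_i=\prod_{j=1}^k\sum_{i\in C_j}x_i$ as Lemma~\ref{Lemma:ProductSimplices}(iii). Your (ii)$\Rightarrow$(i) is a genuinely different, and arguably more elementary, argument than the paper's: you isolate the monomial $\prod_j x_{i_j}^{N_j+d_j}$, use blockwise homogeneity of $f_\cox$ to see that it can only arise from the vertex monomial of $f_\cox$ (whose coefficient is nonzero by $\Newt(f)=\conv(A)$ and nonnegative by the certificate), and thereby get a pointwise positive lower bound for $f_\cox$ off $Z_A$; the paper instead proves (iii)$\Rightarrow$(i) by an initial-form argument, $\initial_w(hf_\cox)=\initial_w(h)\initial_w(f_\cox)$, showing that the nonnegative zero locus of $f_\cox$ is contained in $Z_A$. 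Both routes rest on the same inputs ($\Newt(f)=\conv(A)$ and Proposition~\ref{prop:interiorCAcox}), and your version checks out, granted the routine verification that $f_\cox$ is multihomogeneous of blockwise degrees $d_j\geq 1$, which follows from Lemma~\ref{lem:simplicesKernel} and Lemma~\ref{Lemma:fCoxHomog}.

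The genuine gap is the central direction (i)$\Rightarrow$(ii). Your reduction of it to a multigraded Pólya statement on a product of simplices is fine, but you then only name two possible strategies and execute neither, and this is exactly where the theorem's real work lies. The paper's route is your first option made precise: Proposition~\ref{Prop:CoxPolyaRepresentation} shows the preprime $T=\RR_{\geq0}[x_1,\dots,x_r]+\langle\,1-\sum_{i\in C_j}x_i \mid j\in[k]\,\rangle$ is Archimedean (this needs $C_1\cup\dots\cup C_k=[r]$, which holds for products of simplices), applies the Representation Theorem on $\mathcal{K}(T)$ --- where $f_\cox>0$ because $\mathcal{K}(T)$ avoids $Z_A$, via Proposition~\ref{prop:interiorCAcox} --- and then eliminates the ideal part by substituting $x_i\mapsto x_i/\sum_{l\in C_j}x_l$; clearing denominators afterwards is legitimate only because each block indicator vector lies in $\ker F$ (block-diagonal ray matrix of dilated standard simplices), so $f_\cox$ is homogeneous in each block separately. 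None of these steps appear in your proposal. Your second option (iterating the classical Pólya theorem block by block) can be made to work, but "compactness of the remaining factors" alone does not hand you uniform exponents: you need either a quantitative bound à la Powers--Reznick, or the observation that coefficient positivity persists under multiplication by $\sum_{i\in C_j}x_i$ and is open in the remaining block variables, followed by a finite subcover. Finally, (ii) also asserts $\Newt(f)=\conv(A)$, which you never derive from (i); it is quick (vertex truncations of $f$ are monomials, so Proposition~\ref{Prop:InteriorOfCA_bis} forces positive vertex coefficients, or simply cite Theorem~\ref{Thm:SparsePolya}), but it has to be said.
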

The same result, with a slightly more technical statement in terms of primitive collections and generators of the irrelevant ideal, holds true more generally 
for a product of arbitrary (not necessarily standard) simplices, see Theorem~\ref{thm:polyaInCox}. Moving from products of simplices to arbitrary polytopes, the situation becomes more complicated. In Section~\ref{sec:Examples}, we discuss several examples where neither (ii) nor (iii) in Theorem~\ref{Prop:Intro} can be used to certify copositivity.

\medskip

\paragraph{\textbf{Applications of copositivity.}} Our motivation for extending Pólya’s method to sparse polynomials comes from the fact that certifying the positivity of a polynomial on $\RR^n_{>0}$ has several applications across the sciences.
For example, copositivity plays an important role in studying multistationarity properties of biochemical reaction networks. In \cite[Corollary 1]{ConradiFeliuWiuf}, the authors show how to associate a polynomial $F$, called the critical polynomial, to a reaction network, such that copositivity of $F$ precludes multistationarity in the network.

In this paper, we focus on an application of copositivity in particle physics, more precisely in the study of Feynman integrals. In this setting, possible interactions in a scattering process are represented by Feynman diagrams. Loosely speaking, a Feynman integral computes the probability amplitude for these interactions to occur. The integrands of Feynman integrals are rational functions constructed from Symanzik polynomials, whose structure is determined by the combinatorics of the underlying Feynman diagram. 
By \cite[Theorem 3]{Borinsky}, the convergence of the integral can be guaranteed if the second Symanzik polynomial $\mathcal{F}$ is strictly $A$-copositive with respect to its support.
Assuming all particles are massive, this condition holds if and only if $(t_1 + \dots + t_n)^N\mathcal{F}$ has positive coefficients for some $N \in \NN$ \cite[Theorem 6.1]{SturmfelsTelek}.
The proof of this result relies on Pólya’s theorem with zeros \cite[Theorem 2]{CastlePowersReznick}, as well as the combinatorial structure of the Newton polytope of $\mathcal{F}$.
When some of the particles are massless, which is a physically relevant scenario, the Newton polytope of $\mathcal{F}$ changes, and \cite[Theorem 6.1]{SturmfelsTelek} is silent. In Section \ref{sec:Feynman} we discuss how Theorem~ \ref{Thm:Main} provides a method for making the copositivity of Symanzik polynomials manifest even in the massless setting. 

\medskip

\paragraph{\textbf{Organization. }}The paper is organized as follows. In Section~\ref{Sec:Background}, we review background on sparse copositivity and its connection to toric geometry. In Section~\ref{sec:SparsePolya}, we discuss the Representation Theorem and prove Theorem~\ref{Thm:Main}. Section~\ref{sec:Cox} focuses on Cox coordinates and contains the proof of Theorem~\ref{thm:polyaInCox}, a generalization of Theorem~\ref{Prop:Intro}. Section~\ref{sec:Examples} presents several examples comparing the Pólya-type certificates obtained from Theorem~\ref{Thm:Main} and Theorem~\ref{thm:polyaInCox}, as well as examples demonstrating that Theorem~\ref{thm:polyaInCox} fails without additional assumptions on the support set of the copositive polynomial. Finally, Section~\ref{sec:Applications} discusses Symanzik polynomials and how Theorem~\ref{Thm:Main} can be applied in this setting.

\medskip

\paragraph{\textbf{Notation. }}
We write $\mathbb{R}_{>0}^n$ and $\mathbb{R}^n_{\geq0}$ for the positive and nonnegative orthant in $\mathbb{R}^n$ respectively. For variables $t_1, \dots , t_n$ and $a \in \mathbb{Z}^n$, we use the shorthand notation $t^a = t_1^{a_1}\dots t_n^{a_n}$.
For two vectors $v,w \in \RR^n$, we denote their standard Euclidean scalar product by $v \cdot w \coloneqq v_1 w_1 + \dots + v_n w_n$.
Furthermore, we write $[n]$ for the set $\{1,\dots,n\}$.

\medskip

\paragraph{\textbf{Acknowledgments. }} We thank Simon Telen for his suggestion of looking at positivity representations in the Cox Ring. L. Baldi was funded by the Humboldt Research Fellowship for postdoctoral researchers. J. Weigert was supported by the SPP 2458 “Combinatorial Synergies”, funded by the Deutsche Forschungsgemeinschaft (DFG, German Research Foundation),
project ID: 539677510. M.~L. Telek was funded by the European Union under the Grant Agreement no. 101202522 --- POSSIS. Views and opinions expressed are however those of the author(s) only and do not necessarily reflect those of the European Union or European Research Executive Agency (REA). Neither the European Union nor the granting authority can be held responsible for them.

\section{Sparse copositivity}
\label{Sec:Background}
\subsection{Sparse copositive polynomials}
The adjective \emph{copositive} was coined by Motzkin in 1952 to describe quadratic forms that are nonnegative over the nonnegative real orthant \cite{Motzkin}.
The name was later extended to homogeneous polynomials of arbitrary degree. 
We denote by
\begin{align}
    \label{Eq:DefCnd}
 \mathcal{C}_{n,d} \coloneqq \left\{\, f \in \mathbb{R}[t_1, \dots , t_n]_d \, \mid \,  \text{ for all } t \in \mathbb{R}^n_{\geq0}, \ f(t) \geq 0 \,\right\},
\end{align}
the \emph{cone of copositive homogeneous polynomials}. Here, $\RR[t_1, \dots , t_n]_d$ denotes the space of homogeneous polynomials in $n$ variables of degree $d$.
The interior of $\mathcal{C}_{n,d}$ consists of homogeneous polynomials $f$ that are \emph{strictly copositive}, that is, $f(t) > 0$ for all $t \in \mathbb{R}^n_{\geq0} \setminus \{ 0\}$.
To certify whether a polynomial lies in $\interior \mathcal{C}_{n,d}$, one might use the following classical theorem by Pólya.
\begin{theorem}[Pólya's theorem \cite{Polya}]
\label{Thm:ClassicalPolya}
    Let $f \in \mathbb{R}[t_1, \dots , t_n]_d$.
    If $f(x) > 0$ for all $x \in \mathbb{R}^n_{\geq0} \setminus \{ 0\}$
    then there exists $N \in \mathbb{N}$ such that $(t_1 + \dots + t_n)^Nf$ has only positive coefficients.
\end{theorem}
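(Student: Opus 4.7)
My plan is to combine the homogeneity of $f$ with a compactness argument on the standard simplex. By degree-$d$ homogeneity, the hypothesis $f > 0$ on $\RR^n_{\geq 0} \setminus \{0\}$ is equivalent to $f > 0$ on the compact simplex $\Delta_{n-1} = \{t \in \RR^n_{\geq 0} : \sum_i t_i = 1\}$, so continuity delivers a uniform lower bound $f \geq c$ on $\Delta_{n-1}$ for some constant $c > 0$.

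The heart of the argument is to read off the coefficient of $t^\alpha$ in $(t_1 + \cdots + t_n)^N f$ for $|\alpha| = N+d$ and to recognize it, after normalization, as a Bernstein-style approximation of $f$ evaluated at the rational point $\alpha/(N+d) \in \Delta_{n-1}$. Writing $f = \sum_{|\gamma|=d} a_\gamma t^\gamma$ and applying the multinomial theorem gives
\[
c_\alpha^{(N)} \;=\; \sum_{\substack{|\gamma|=d \\ \gamma \leq \alpha}} \frac{N!}{(\alpha-\gamma)!}\, a_\gamma.
\]
Multiplying by $\alpha!/(N+d)!$ rewrites each summand as $a_\gamma$ times the product of falling factorials $\prod_i \alpha_i^{\underline{\gamma_i}}$ divided by the rising factorial $(N+1)(N+2)\cdots(N+d)$. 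I would then compare this to the monomial $\prod_i (\alpha_i/(N+d))^{\gamma_i}$ appearing in $f(\alpha/(N+d))$: since the difference between $\alpha_i^{\underline{\gamma_i}}$ and the pure power $\alpha_i^{\gamma_i}$ is a polynomial in $\alpha_i$ of degree at most $\gamma_i-1$, the overall error after normalization is $O(1/N)$.

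The main obstacle --- more a bookkeeping issue than a conceptual one --- is to show that this $O(1/N)$ error is bounded \emph{uniformly} in $\alpha$, so that for $N$ sufficiently large the approximation beats $c$ simultaneously for every multi-index $\alpha$ with $|\alpha| = N+d$. Once this uniform estimate is in place, combining it with $f(\alpha/(N+d)) \geq c$ forces $c_\alpha^{(N)} > 0$ for every such $\alpha$, which is precisely the statement that $(t_1+\cdots+t_n)^N f$ has positive coefficients.
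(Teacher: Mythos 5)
Your argument is correct, and it is essentially Pólya's original quantitative proof (the form refined by Powers and Reznick): after normalizing by $\alpha!/(N+d)!$, the coefficient $c_\alpha^{(N)}$ becomes $\sum_{|\gamma|=d} a_\gamma \prod_i \alpha_i^{\underline{\gamma_i}}/\bigl((N+1)\cdots(N+d)\bigr)$, which is a Bernstein-type approximation of $f$ at the point $\alpha/(N+d)$ of the simplex; since each $\alpha_i/(N+d)$ lies in $[0,1]$ and there are only finitely many fixed coefficients $a_\gamma$, the $O(1/N)$ error you describe is indeed uniform over all $\alpha$ with $|\alpha|=N+d$, so the compactness bound $f\ge c>0$ on the simplex closes the argument. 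Note, however, that this is a genuinely different route from the paper's: there the classical statement is quoted from Pólya and re-obtained only as the special case $A=\{e_1,\dots,e_n\}$ of Theorem~\ref{Thm:SparsePolya}, whose proof is non-quantitative and goes through Krivine's Representation Theorem (Theorem~\ref{Thm:RepThm}) --- one shows the preprime $T=\RR_{\geq 0}[t^{a_1},\dots,t^{a_m}]+\langle 1-\sum_i t^{a_i}\rangle$ is Archimedean (Lemma~\ref{Lemma:Archimedian}), concludes $f\in T$, clears denominators using homogeneity (Proposition~\ref{prop:polya}), and upgrades nonnegative to strictly positive coefficients by an $\varepsilon$-perturbation. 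Your approach buys effectivity: it yields explicit bounds on $N$ in terms of $d$, the lower bound $c$, and the size of the coefficients, which the Positivstellensatz-style argument does not. On the other hand it leans on the special combinatorics of the standard simplex (explicit multinomial coefficients, lattice points $\alpha/(N+d)$ filling the simplex), whereas the paper's abstract route is precisely what survives the passage to an arbitrary support set $A$ and the toric cone $Y_{\hat A}$, which is the point of the generalization.
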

The main goal of this work is to provide a certificate for copositivity, similar to Theorem~\ref{Thm:ClassicalPolya}, for polynomials with a fixed set of exponent vectors.
For a Laurent polynomial $f = \sum_{a \in \mathbb{Z}^n} c_a t^a \in \mathbb{R}[t_1^\pm, \dots, t_n^\pm]$, we write $\supp(f) := \{ \, a \in \mathbb{Z}^n \, \mid \, c_a \neq 0 \, \}$ for the \emph{support} of $f$.
For a finite set $A \subseteq \mathbb{Z}^n$, we denote by
\begin{align}
    \RR[t_1^\pm,\ldots,t_n^\pm]_A := \big\{ f  \in \mathbb{R}[t_1^\pm,\dots,t_n^\pm] \, \mid \, \supp(f) \subseteq A \big\}
\end{align}
the vector space of Laurent polynomials whose support is contained in $A$. 
Moreover, we  define the \emph{sparse copositive cone} as
\begin{align}
\label{Eq:DefSparseCoposCone}
\mathcal{C}_A \coloneqq \left\{ f \in \RR[t_1^\pm,\ldots,t_n^\pm]_A \mid \text{ for all } t \in \mathbb{R}^n_{>0},\ f(t) \geq 0 \right\}.    
\end{align}

In contrast to the definition of the cone $\mathcal{C}_{n,d}$ in \eqref{Eq:DefCnd}, the definition of the sparse copositive cone~$\mathcal{C}_{A}$ allows negative integer exponents and does not require homogeneity. To justify the use of the term copositive for $\mathcal{C}_{A}$, we recall the following simple fact.

\begin{lemma}
    Let $A \subseteq \mathbb{Z}^n$ be a finite set and $f \in \RR[t_1^\pm,\ldots,t_n^\pm]_A$. Let $a_* \in \mathbb{Z}^n$ be such that $a_* + A \subseteq \mathbb{N}^n$ and denote $\widetilde{f} \in \RR[t_0, \dots, t_n]_d$ the homogenization of $t^{a_*} f$ in $t_0$.
    Then $f$ lies in the sparse copositive cone~$\mathcal{C}_A$ if and only if $\widetilde{f} \in \mathcal{C}_{n+1,d}$.
\end{lemma}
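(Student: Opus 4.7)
The plan is to reduce everything to the standard (de)homogenization trick, using that multiplication by $t^{a_*}$ is a positive function on $\mathbb{R}^n_{>0}$. Write $g \coloneqq t^{a_*} f \in \RR[t_1,\ldots,t_n]$; by hypothesis on $a_*$ this is an ordinary (non-Laurent) polynomial, and the equality $\widetilde{f}(t_0,t_1,\ldots,t_n) = t_0^d\, g(t_1/t_0,\ldots,t_n/t_0)$ holds for $t_0 > 0$. Since $t^{a_*} > 0$ on $\mathbb{R}^n_{>0}$, we have $f \in \cC_A$ if and only if $g(t) \geq 0$ for all $t \in \mathbb{R}^n_{>0}$.

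For the ``if'' direction, assume $\widetilde{f} \in \cC_{n+1,d}$. For any $t \in \mathbb{R}^n_{>0}$, specialising $t_0 = 1$ gives $g(t) = \widetilde{f}(1,t) \geq 0$, which by the reduction above yields $f \in \cC_A$.

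For the ``only if'' direction, assume $f \in \cC_A$, so that $g \geq 0$ on $\mathbb{R}^n_{>0}$. For any $(t_0,t) \in \mathbb{R}_{>0}^{n+1}$ the point $t/t_0$ lies in $\mathbb{R}^n_{>0}$, hence
\begin{equation*}
\widetilde{f}(t_0,t) \;=\; t_0^{d}\, g(t/t_0) \;\geq\; 0.
\end{equation*}
Since $\widetilde{f}$ is a polynomial and $\mathbb{R}^{n+1}_{>0}$ is dense in $\mathbb{R}^{n+1}_{\geq 0}$, continuity extends the inequality to all of $\mathbb{R}^{n+1}_{\geq 0}$, giving $\widetilde{f} \in \cC_{n+1,d}$.

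There is no real obstacle here; the only point worth stating carefully is that one must invoke density/continuity to handle the boundary stratum $\{t_0 = 0\} \cup \{t_i = 0\}$ of $\mathbb{R}^{n+1}_{\geq 0}$, since the scaling identity between $\widetilde{f}$ and $g$ is only available on the open orthant. Everything else is bookkeeping about supports and exponents.
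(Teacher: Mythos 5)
Your proposal is correct and follows essentially the same route as the paper: reduce $f\in\cC_A$ to nonnegativity of $t^{a_*}f$ on $\RR^n_{>0}$, relate it to $\widetilde{f}$ on the open orthant via the homogenization/scaling identity, and pass to $\RR^{n+1}_{\geq 0}$ by continuity and density. No issues.
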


\begin{proof}
   Since a continuous function is nonnegative on $\mathbb{R}_{>0}^{n+1}$ if and only if it is nonnegative on $\mathbb{R}_{\geq0}^{n+1}$, we have 
    \[ \mathcal{C}_{n+1,d} = \left\{\, g \in \mathbb{R}[t_0, \dots , t_n]_d \, \mid \,  \text{ for all } t \in \mathbb{R}^{n+1}_{> 0}, g(t) \geq 0 \,\right\}.\]
    Now, the statement follows, since $f(t) \geq 0$ for $t = (t_1, \dots, t_n) \in \RR^n_{>0}$ if and only if $\widetilde{f}(1,t_1,\dots ,t_n) =  t^{a_*} f(t) \geq 0$, which is equivalent to $\widetilde{f}(t_0,t_1,\dots ,t_n)  \geq 0$ for any $(t_0,t_1,\dots,t_n) \in \mathbb{R}_{>0}^{n+1}$.
\end{proof}

\begin{example}
\label{Ex:Running}
We conclude this subsection with an example that illustrates the differences between homogeneous and sparse copositive polynomials.
A simple computation shows that the homogeneous polynomial
    \begin{align*}
         h &=t_1^3 + t_2^3 +t_3^3 +t_4^3+t_1t_4^2-1.9t_2t_4^2+t_2^2t_4+  t_3t_4^2 + t_1t_3t_4-1.9t_2t_3t_4+t_2^2t_3 \\
         &=t_1^3 + t_2^3 +t_3^3  + t_4 (t_4 - t_2)^2 + 0.1t_2t_4^2 + t_3 (t_4 - t_2)^2 + 0.1t_2t_3t_4 +t_1t_4^2 + t_1t_3t_4
    \end{align*}
    takes only positive values on $\mathbb{R}^4_{\geq 0} \setminus \{0\}$, which implies that $h \in \interior \mathcal{C}_{4,3}$. To certify copositivity of $h$, one can apply Pólya's theorem, which guarantees that $(t_1 + t_2 + t_3 +t_4)^Nh$ has positive coefficients for some $N \in \mathbb{N}$. In this example, the smallest such $N$ is $11$.

We slightly modify $h$ by removing the monomials $t_1^3,t_2^3,t_3^3$, obtaining the polynomial
\begin{equation}
     \label{Eq:Running}
  \begin{aligned}
         f &= t_4^3+t_1t_4^2-1.9t_2t_4^2+ t_2^2t_4  +t_3t_4^2 + t_1t_3t_4-1.9t_2t_3t_4+t_2^2t_3 \\
         &=t_4 (t_4 - t_2)^2 + 0.1t_2t_4^2 + t_3 (t_4 - t_2)^2 + 0.1t_2t_3t_4 +t_1t_4^2 + t_1t_3t_4.
    \end{aligned}
    \end{equation}
Since too many degree $3$ monomials are missing (the polynomial $f$ is too sparse), the Newton polytope of $f$ is not the $3$-dilated simplex that one would expect from a homogeneous polynomial of degree $3$. For an illustration, we refer to Figure~\ref{FIG1:NewtonPoly}. Even though $f$ is positive on $\mathbb{R}^4_{>0}$, 
the sparsity of $f$ causes zeros on the boundary of $\mathbb{R}^4_{\geq0}$: for example,
we have $f(t_1,0,t_3,0) = 0$ for any $t_1,t_3 \geq 0$. Thus $f \notin \interior \mathcal{C}_{4,3}$ and Theorem~\ref{Thm:ClassicalPolya} does not guarantee the existence of a Pólya certificate. In fact the coefficient of the monomial $t_2t_3^{N+1}t_4$ in the polynomial $(t_1 + t_2+ t_3+t_4)^Nf$ is negative and equal to $-1.9$ for any $N\in \NN$.

\end{example}

\begin{figure}[t]
\centering
\includegraphics[scale=0.4]{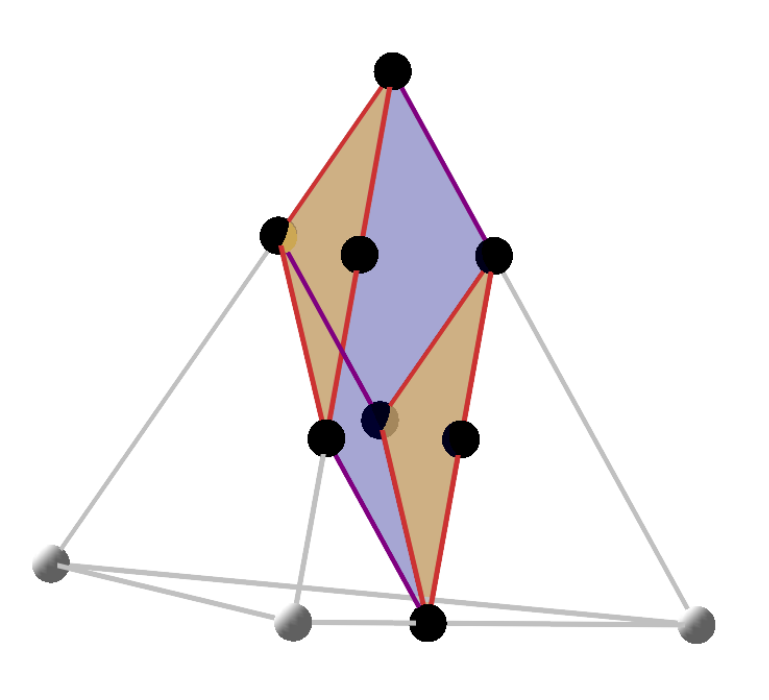}
\caption{{\small  Newton polytope of the polynomial $f$ from \eqref{Eq:Running} sitting inside the $3$-dilated simplex.}}\label{FIG1:NewtonPoly}
\end{figure}

\subsection{Positive toric geometry}
\label{Sec:PosToricGeom}
The goal of this subsection is to define a suitable compactification of $\mathbb{R}^n_{>0}$ and to describe the interior of the sparse copositive cone given in \eqref{Eq:DefSparseCoposCone}.
Even though some versions of this latter result may exist in the literature or be known to experts, we are not aware of a reference. Hence, we provide full arguments in Proposition~\ref{Prop:InteriorOfCA}, which will play a crucial role in the proof of Theorem~\ref{Thm:Main}.

We first recall some notions from toric geometry, following the notation in \cite{telen2022introductiontoricgeometry,TelenBook}.
For a finite set $A = \{ a_1, \dots , a_m\} \subseteq \mathbb{Z}^n$, we consider the monomial maps
\begin{equation}
    \label{Eq:MonomialMap}
\begin{aligned}
\phi_A &\colon (\mathbb{C}^*)^n \to \mathbb{P}^{m-1}, \quad t=(t_1,\dots,t_n) \mapsto [t^{a_1}\colon \dots \colon t^{a_m}],\\
\varphi_{\hat{A}}&\colon (\mathbb{C}^*)^{n+1} \to \mathbb{C}^{m}, \quad (u,t) = (u,t_1,\dots,t_n) \mapsto (u t^{a_1}, \dots , u t^{a_m}).
\end{aligned}
\end{equation}
The Zariski closure of the image of $\phi_A$ in $\mathbb{P}^{m-1}$ is called the \emph{projective toric variety} $X_A$.
Its affine cone is the \emph{affine toric variety} $Y_{\hat{A}}$, which is the Zariski closure of the image of $\varphi_{\hat{A}}$ in $\mathbb{C}^m$, see e.g. \cite[Proposition 3.7]{telen2022introductiontoricgeometry}.
Hence $\CC[X_A] = \CC[Y_{\hat{A}}]$.
The homogeneous coordinate ring of $X_A$ is more precisely the graded ring
\begin{align}
\label{Eq:CoordinateRingXA}
    \mathbb{C}[X_A] = \mathbb{C}[t^{a_1}, \dots , t^{a_m}] = \bigoplus_{k=0}^\infty \bigoplus_{ \substack{b = \sum_{i=1}^m v_ia_i \\ v_1, \dots, v_m \in \mathbb{N}, \sum v_i = k }} \mathbb{C} \; t^b.
\end{align}

We are interested in the positive part of the projective toric variety $X_A$.
To that end, we consider the \emph{nonnegative} and \emph{positive projective orthants}
\begin{align*}
    \mathbb{P}^{m-1}_{\geq 0} &\coloneqq \{  [x_1 \colon \dots \colon x_m] \in \mathbb{P}^{m-1}(\RR) \, \mid \, x_1, \dots, x_m \in \mathbb{R}_{\geq0}  \},\\
    \mathbb{P}^{m-1}_{> 0}  & \coloneqq \{  [x_1 \colon \dots \colon x_m] \in \mathbb{P}^{m-1}(\RR) \, \mid \, x_1, \dots, x_m \in \mathbb{R}_{>0}  \}.
\end{align*}
Using the map $\PP^{m-1}_{\geq 0} \to \RR^m, \; x \mapsto \tfrac{1}{x_1+ \dots +x_m}(x_1, \dots, x_m)$, we see that $\mathbb{P}^{m-1}_{\geq 0} $ is homeomorphic to the $m-1$-dimensional probability simplex in $\RR^m$, and $\mathbb{P}_{>0}^{m-1} = \interior \PP^{m-1}_{\geq 0}$ to its relative interior.

The \emph{positive part} of a projective toric variety $X_A$ is defined as $(X_A)_{>0} \coloneqq X_A \cap \mathbb{P}_{>0}^{m-1}$ and the \emph{nonnegative part} as $(X_A)_{\ge 0} \coloneqq X_A \cap \mathbb{P}_{\ge 0}^{m-1}$. The nonnegative part $(X_A)_{\geq0}$ is also the Euclidean closure of $(X_A)_{>0}$ inside $\mathbb{P}_{\geq0}^{m-1}$. 
This follows from the fact that the algebraic moment map is a homeomorphism when restricted to the nonnegative part, see e.g. \cite[Th.~12.5.2]{CoxLittleSchenck}. The positive part $(X_A)_{>0}$ can alternatively be described as the image of $\mathbb{R}^n_{>0}$ under the monomial map $\phi_{A}$ in~\eqref{Eq:MonomialMap}. 
The nonnegative part has a similar description in terms of faces of the polytope $\conv(A)$, see \cite[Section 5.1]{TelenBook}. 
We also notice that the cone over $(X_A)_{>0}$ (respectively, $(X_A)_{\ge 0}$) is equal to $(Y_{\hat{A}})_{>0} \coloneqq Y_{\hat{A}} \cap \RR_{>0}^m$ (respectively $(Y_{\hat{A}})_{\geq 0} \coloneqq Y_{\hat{A}} \cap \RR_{\geq 0}^m$).

Since $\CC[t^{a_1}, \dots, t^{a_m}] = \CC[Y_{\hat{A}}]$
every polynomial $f \in \CC[t^{a_1}, \dots, t^{a_m}]$ induces a function on the affine toric variety~$Y_{\hat{A}}$.
In the proof of Proposition~\ref{Prop:InteriorOfCA} and of Theorem~\ref{Thm:Main} we will need the precise construction, so we discuss this correspondence in more detail.
For $k \in \mathbb{N}$, we write
\begin{align}
\label{Eq:MinkowskiAk}
 k \cdot A := \underbrace{A + \dots + A}_{k \text{ times}} = \big\{ \, b \in \mathbb{Z}^n \, \mid \, \exists v_{1 b}, \dots, v_{m b} \in \mathbb{N}\colon b = \sum_{j=1}^m v_{j b} a_j \text{ and }  \sum_{j=1}^m v_{j b} = k \big\} 
\end{align}
for the $k$-times Minkowski sum of $A$. Notice that in general both inclusions in $\{ka\,|\,a\in A\}\subsetneq k \cdot A\subsetneq \{kx\,|\,x\in \conv(A)\}\cap \ZZ^d$ can be strict.
Write now $y_i \coloneqq u t^{a_i}$. If $f = \sum_{b \in k \cdot A} c_b t^b \in \RR[t_1^{\pm 1}, \dots , t_n^{\pm 1}]_{k \cdot A}$ then by definition there exists a (non necessarily unique) integer matrix $V = (v_{ij})$ such that
\begin{align}
\label{eq:DefV}
    u^k f(t_1, \dots , t_n) = \sum_{b \in k\cdot A} c_b y_1^{v_{1 b}} \dots y_m^{v_{m b}}.
\end{align}
We will write $\hat{f}_V = f(y_1, \dots , y_m) \in \RR[y_1, \dots , y_m]$ for such a polynomial depending on $V$. Of course, the value of $\hat{f}_V$ on $Y_{\hat{A}} \subseteq \RR^m$ only depends on $f$ and not on the choice of the matrix $V$.

\begin{example}
\label{ex:Vmatrix}
Let $A = \{ \, (0,0), \, (1,0), \, (0,1), \, (1,1) \, \}$. For $k = 2$, we have
\begin{align*}
    2 \cdot A = \{ \, (0,0), \, (1,0), \, (2,0), \, (0,1), \, (1,1), \, (2,1),  (0,2), \, (1,2), \, (2,2) \, \}.
\end{align*}
Fix a Laurent polynomial $f=\sum_{b\in k\cdot A}c_bt^b\in \RR[t_1^\pm,\ldots,t_n^\pm]_{k\cdot A}$. 
To explicitly write the function $\hat{f}_V$ we express every point in $k\cdot A$ as a sum of $k$ points in $A$ and record these decompositions in a matrix $V$.
There are two different ways to express $(1,1)$ as sum of two elements of $A$: either $(1,1) = (0,0) + (1,1)$, or $(1,1) = (1,0) + (0,1)$.
  These give rise to two choices for the $V$ matrix
  \begin{align*}
   V_1 = \begin{pmatrix}
       2 & 1 & 0 & 1 & \mathbf{1} & 0 & 0 & 0& 0 \\
       0 & 1 & 2 & 0 & \mathbf{0} & 1 & 0 & 0& 0  \\
       0 & 0 & 0 & 1 & \mathbf{0} & 0 & 2 & 1 & 0 \\
       0  & 0 & 0 & 0 & \mathbf{1} & 1 & 0 & 1& 2 
   \end{pmatrix}, \quad    V_2 = \begin{pmatrix}
       2 & 1 & 0 & 1 & \mathbf{0} & 0 & 0 & 0& 0 \\
       0 & 1 & 2 & 0 & \mathbf{1} & 1 & 0 & 0& 0  \\
       0 & 0 & 0 & 1 & \mathbf{1} & 0 & 2 & 1 & 0 \\
       0  & 0 & 0 & 0 & \mathbf{0} & 1 & 0 & 1& 2 
   \end{pmatrix}.
\end{align*}
The two polynomials $\hat{f}_{V_1}$, $\hat{f}_{V_2} \in \RR[y_1, \dots , y_m]$ are then different, but they agree on $Y_{\hat{A}}$.
\end{example}

The restriction of the monomial map $\varphi_{\hat{A}}$ to $\RR^{n+1}_{>0}$ is surjective on $(Y_{\hat{A}})_{>0}$ \cite[Proposition 5.1.3]{TelenBook}. 
For $f \in \RR[t_1^\pm, \dots , t_n^\pm]_{k\cdot A}$, we summarize the relevant maps in the following commutative diagram
\begin{equation}
     \label{eq:ftildeCD}
                  \begin{tikzcd}
\RR_{>0}^{n+1}\arrow[rr,two heads,"\varphi_{\hat{A}}"]\arrow[d,"u^k \cdot f"] & & (Y_{\hat{A}})_{>0}\arrow[d,hookrightarrow] \\
           \RR  & & \arrow[ll,"f = \hat{f}_V"] (Y_{\hat{A}})_{\geq0}
         \end{tikzcd}
     \end{equation}
\begin{definition}
    We say that $f \in \RR[t_1^\pm,\ldots,t_n^\pm]_{k \cdot A}$ is \emph{$A$-copositive} (respectively, \emph{strictly $A$-copositive}) if $f \ge 0$ on $(Y_{\hat{A}})_{\geq0}$ (respectively, if $f > 0$ on $(Y_{\hat{A}})_{\geq0} \setminus \{ 0 \}$).
\end{definition}
In the above definition, if we set $A = \{\,e_1,  \dots , e_n\, \}$ (where $e_i$ is the standard $i$-th basis vector) we recover the usual notion of (strict) copositivity for homogeneous polynomials.
Strict $A$-copositivity is an equivalent condition to $f \in \interior \mathcal{C}_{k\cdot A}$, as we show in the following proposition.

\begin{proposition}
\label{Prop:InteriorOfCA}
Let $A \subseteq \mathbb{Z}^n$ be a finite set, $k \in \mathbb{N}$ and $f \in \RR[t_1^{\pm 1}, \dots , t_n^{\pm 1}]_{k \cdot A}$. 
Then the following are equivalent.
\begin{itemize}
    \item[(i)] $f$ belongs to $\interior \mathcal{C}_{k\cdot A}$, the interior of the sparse copositive cone.
    \item[(ii)] $f$ is strictly  $A$-copositive.
\end{itemize}
\end{proposition}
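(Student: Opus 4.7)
The plan is to translate both conditions into positivity statements for $\hat{f}_V$ on $(Y_{\hat{A}})_{\geq 0}$ and then read off the result from homogeneity and compactness. The key preliminary observation is that $f\in \mathcal{C}_{k\cdot A}$ if and only if $\hat{f}_V\geq 0$ on all of $(Y_{\hat{A}})_{\geq 0}$: setting $u=1$ in \eqref{eq:DefV} gives $f(t)=\hat{f}_V(t^{a_1},\dots,t^{a_m})$, and since $\varphi_{\hat{A}}$ restricted to $\RR^{n+1}_{>0}$ surjects onto $(Y_{\hat{A}})_{>0}$, nonnegativity of $f$ on $\RR^n_{>0}$ is equivalent to nonnegativity of $\hat{f}_V$ on $(Y_{\hat{A}})_{>0}$; continuity together with the closure identity $(Y_{\hat{A}})_{\geq 0}=\overline{(Y_{\hat{A}})_{>0}}$ (inherited from the analogous statement for $X_A$ recalled in the excerpt) extends this to the whole nonnegative part. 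I would also record that $\hat{f}_V$ is homogeneous of degree $k$ on $Y_{\hat{A}}$, since every monomial $y_1^{v_{1b}}\cdots y_m^{v_{mb}}$ in \eqref{eq:DefV} satisfies $\sum_j v_{jb}=k$ by the very definition of $k\cdot A$.

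For the implication (ii)$\Rightarrow$(i) I would use compactness. By homogeneity of $\hat{f}_V$, strict $A$-copositivity is equivalent to the existence of $\delta>0$ with $\hat{f}_V\geq \delta$ on the compact slice $K\coloneqq (Y_{\hat{A}})_{\geq 0}\cap S^{m-1}$. The assignment $h\mapsto \hat{h}_V|_K$ is a continuous linear map from the finite-dimensional space $\RR[t_1^\pm,\dots,t_n^\pm]_{k\cdot A}$ to $C(K)$ (its restriction to $K$ is independent of the choice of $V$), so any perturbation $g$ of $f$ of sufficiently small norm satisfies $\|\hat{g}_V\|_\infty < \delta$ on $K$. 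Hence $\widehat{(f+g)}_V>0$ on $K$, and homogeneity propagates this to all of $(Y_{\hat{A}})_{\geq 0}\setminus\{0\}$, giving $f+g\in \mathcal{C}_{k\cdot A}$.

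For (i)$\Rightarrow$(ii) I would argue by contradiction. Suppose there exists $y^\ast\in (Y_{\hat{A}})_{\geq 0}\setminus\{0\}$ with $\hat{f}_V(y^\ast)=0$. Since $y^\ast\in \RR^m_{\geq 0}\setminus\{0\}$, some coordinate $y^\ast_i$ is strictly positive. The point $ka_i$ lies in $k\cdot A$ as the $k$-fold sum of $a_i$ with itself, so $h\coloneqq -t^{ka_i}$ belongs to $\RR[t_1^\pm,\dots,t_n^\pm]_{k\cdot A}$, and a natural choice of $V$ makes $\hat{h}_V=-y_i^k$ (even without fixing such a choice, the value $\hat{h}_V(y^\ast)=-(y^\ast_i)^k$ is forced, since it depends only on the point in $Y_{\hat{A}}$). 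Because $f\in \interior \mathcal{C}_{k\cdot A}$, one has $f+\epsilon h\in \mathcal{C}_{k\cdot A}$ for some $\epsilon>0$, hence by the preliminary step $\hat{f}_V(y^\ast)+\epsilon \hat{h}_V(y^\ast)\geq 0$. But this reads $-\epsilon (y^\ast_i)^k\geq 0$, a contradiction.

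The only delicate point in carrying this out is to keep straight the distinction between $\hat{f}_V$ as a polynomial in $\RR[y_1,\dots,y_m]$ (depending on the choice of $V$) and its value on $Y_{\hat{A}}$ (independent of $V$); all the topological statements above concern only the restriction to $Y_{\hat{A}}$, where the ambiguity in $V$ disappears.
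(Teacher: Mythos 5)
Your proof is correct and follows essentially the same route as the paper: reduce both conditions to positivity of the induced function on $(Y_{\hat{A}})_{\geq 0}$, use homogeneity plus a compact slice for (ii) $\Rightarrow$ (i), and for (i) $\Rightarrow$ (ii) perturb $f$ by a small negative multiple of a monomial supported in $k\cdot A$ to contradict interiority at a boundary zero. The only cosmetic differences are the sphere slice instead of the simplex slice $y_1+\dots+y_m=1$ and the perturbation $-\epsilon t^{k a_i}$ in place of the paper's $-\epsilon\bigl(\sum_i t^{a_i}\bigr)^k$, both of which work equally well.
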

\begin{proof}
Using  the surjection $\mathbb{R}^{n+1}_{>0} \twoheadrightarrow (Y_{\hat{A}})_{>0}$ and the commutative diagramm (\ref{eq:ftildeCD}), we have that $f$ is nonnegative on $\mathbb{R}^n_{>0}$, if and only if it is nonnegative on $(Y_{\hat{A}})_{>0}$. By continuity, it follows that
\begin{align}
\label{Eq:CkA_correspondence}
\mathcal{C}_{k \cdot A} = \{\, f \in \mathbb{R}[t_1^\pm,\ldots,t_n^\pm]_{k \cdot A} \, \mid \, \forall y \in (Y_{\hat{A}})_{\ge 0}, \ f(y) \geq 0  \,\}.
\end{align}
The convex cone $(Y_{\hat{A}})_{\ge 0}$ has a compact section, (e.g. given by $y_1 + \dots + y_m = 1$,) which we identify with  $(X_{A})_{\ge 0} \subseteq (\PP^{m-1})_{\ge 0}$. For $f \in \mathbb{R}[t_1^\pm,\ldots,t_n^\pm]_{k \cdot A}$, $y \in (Y_{\hat{A}})_{\ge 0} \setminus \{0\}$ and $\lambda \in \RR_{>0}$, the sign of $f(\lambda y) = \lambda^k f(y)$ is constant in $\lambda$ and therefore determined only by its value on $(X_{A})_{\ge 0}$. 
Assume first that $f(y)>0$ for all $y\in (X_A)_{\geq 0}$, then since  $(X_A)_{\geq 0}$ is compact, any small perturbation of $f$ remains strictly positive on $(X_A)_{\geq 0}$, proving that $f\in \interior(\mathcal{C}_{k\cdot A})$. On the other hand if for $f\in \mathcal{C}_{k\cdot A}$ there exists $y\in (X_A)_{\geq 0}$ with $f(y)=0$, then for each $\epsilon>0$ we find that $f-\epsilon (\sum_i t^{a_i})^k$ is negative at $y$, proving that $f$ is not in the interior of $\mathcal{C}_{k\cdot A}$ in this case.\end{proof}

We introduce a notation following e.g. \cite{Bihan2002}, necessary to give another characterization of strict $A$-copositivity.
For $f = \sum_{b \in \ZZ^n} c_b t^b \in \RR[t_1^{\pm 1}, \dots , t_1^{\pm 1}]$ and $S \subseteq \RR^n$, we write
    \[
        f^S \coloneqq \sum_{b \in S \cap \ZZ^n} c_b t^b
    \]
    for the \emph{truncation} of $f$ to $S$.
If $S = F_w$ is a face of the Newton polytope of $f$ with inner normal vector $w \in \mathbb{R}^n$, then $f^{F_w}$ is also called the \emph{initial form} of $f$ with respect to $w$, and is denoted by $\initial_w(f)$. By \cite[Lemma~2.6.2]{MaclaganSturmfels}, initial forms are multiplicative, that is, for every $f,g \in \mathbb{R}[t_1^\pm, \dots , t_n^\pm]$ and $w \in \mathbb{R}^n$ we have 
\begin{align}
\label{Eq:InitialFormsFactor}
    \initial_w(fg) = \initial_w(f) \initial_w(g).
\end{align}
This crucial property of initial forms will be used in the proof of Theorem~\ref{Thm:Main}.
We can now state another equivalent characterizations of $\interior \mathcal{C}_{k\cdot A}$.

\begin{proposition}
\label{Prop:InteriorOfCA_bis}
Let $A \subseteq \mathbb{Z}^n$ be a finite set, $k \in \mathbb{N}$ and $f \in \RR[t_1^{\pm 1}, \dots , t_n^{\pm 1}]_{k \cdot A}$. 
Then the following are equivalent.
\begin{itemize}
    \item[(ii)] $f$ is strictly  $A$-copositive,
    \item[(iii)]  $f^{k\cdot(F \cap A)}(t) > 0$ for all $t \in \mathbb{R}^n_{>0}$ and all nonempty faces $F \subseteq \conv(A)$,
    \item[(iv)]  $f^G(t) > 0$ for all $t \in \mathbb{R}^n_{>0}$ and all nonempty faces $G \subseteq \conv(k \cdot A)$.
\end{itemize}
\end{proposition}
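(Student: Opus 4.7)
The overall strategy is to relate the decomposition of the nonnegative toric variety $(Y_{\hat{A}})_{\geq 0}$ into torus strata indexed by faces of $\conv(A)$ with the truncations of $f$ to those faces, then identify faces of $\conv(k\cdot A)$ with faces of $\conv(A)$.

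\textbf{Equivalence (iii) $\Leftrightarrow$ (iv).} This is a purely combinatorial step. Since $\conv(k\cdot A) = k\cdot\conv(A)$, every face $G$ of $\conv(k\cdot A)$ is of the form $G = kF$ for a unique face $F$ of $\conv(A)$. The key identity is $(k\cdot A) \cap kF = k\cdot (A\cap F)$: the inclusion $\supseteq$ is clear, and for the converse, if $b = \sum_j v_j a_j \in k\cdot A$ with $b/k \in F$, then writing $b/k$ as the convex combination of the $a_j$ with weights $v_j/k$ forces every $a_j$ with $v_j > 0$ to lie in $F$, since $F$ is a face. Hence $f^G = f^{k\cdot(A\cap F)}$, and (iii) and (iv) are literally the same collection of conditions.

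\textbf{Equivalence (ii) $\Leftrightarrow$ (iii).} The plan is to use the stratification
\[
(Y_{\hat{A}})_{\geq 0} \setminus \{0\} \;=\; \bigsqcup_{\emptyset\neq F \preceq \conv(A)} (Y_{\widehat{A\cap F}})_{>0},
\]
which follows from the general description of the nonnegative part of a toric variety by faces of its polytope (cf.\ \cite[Section 5.1]{TelenBook}). On the open stratum $F = \conv(A)$ this is $(Y_{\hat{A}})_{>0}$, on which $f$ is positive iff $f(t)>0$ for all $t \in \RR^n_{>0}$ by diagram~\eqref{eq:ftildeCD}. For a proper face $F$, I would parametrise $(Y_{\widehat{A\cap F}})_{>0}$ via the monomial map $\varphi_{\widehat{A\cap F}}(u,t) = (u t^{a_j})_{a_j \in F}$ padded with zeros for $a_j \notin F$, and evaluate $\hat f_V$ on such a point. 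Choosing a decomposition matrix $V$ compatible with $F$ (for every $b \in k\cdot(A\cap F)$, pick a decomposition supported in $A\cap F$; such a decomposition exists by the identity above), all monomials of $\hat f_V$ corresponding to $b \notin k\cdot(A\cap F)$ vanish at $y$, since any such $b$ requires at least one factor $a_j \notin F$ so $y_j = 0$, while the surviving monomials collapse to $u^k t^b$. Thus $f(y) = u^k f^{k\cdot(A\cap F)}(t)$ on the image of the stratum, and since $u^k > 0$, $f > 0$ on $(Y_{\widehat{A\cap F}})_{>0}$ is equivalent to $f^{k\cdot(A\cap F)}(t) > 0$ for all $t \in \RR^n_{>0}$. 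Taking the disjoint union over all nonempty faces $F$ yields (ii) $\Leftrightarrow$ (iii).

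\textbf{Expected obstacle.} The main subtlety lies in the dependence of $\hat f_V$ on the decomposition matrix $V$. While the value of $\hat f_V$ on $Y_{\hat{A}}$ is independent of $V$, the polynomial itself is not, so in order to read off the restriction of $f$ to a boundary stratum one must be careful to pick a $V$ adapted to the face under consideration, and then verify that the surviving monomials reassemble exactly to the truncation $f^{k\cdot(A\cap F)}$. Once the combinatorial identity $(k\cdot A)\cap kF = k\cdot(A\cap F)$ is in hand, this is a routine verification. The only other point worth attention is a clean justification of the stratification of $(Y_{\hat{A}})_{\geq 0}$, but this is standard material.
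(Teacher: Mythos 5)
Your proposal is correct and takes essentially the same route as the paper: the same face stratification of $(Y_{\hat{A}})_{\geq 0}\setminus\{0\}$ with padded monomial parametrizations, the same evaluation of $\hat{f}_V$ on each positive stratum yielding $f(y)=u^k f^{k\cdot(F\cap A)}(t)$, and the same combinatorial identity $(k\cdot A)\cap kF = k\cdot(F\cap A)$ for (iii) $\Leftrightarrow$ (iv). The only cosmetic difference is that you sidestep the $V$-dependence by choosing a face-adapted decomposition matrix for each face, whereas the paper works with one fixed $V$ and uses the face property to see that any decomposition of $b\in k\cdot(F\cap A)$ is automatically supported in $F\cap A$; both resolutions are valid.
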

\begin{proof}
The proof relies on the following observations.
For every $y = (y_a \colon a \in A) \in (Y_{\hat{A}})_{\geq0} \setminus \{0\}$, there exists a nonempty face $F$ of the convex hull of $A$  and $(u,t) \in \mathbb{R}^{n+1}_{>0}$ such that
\begin{align}
\label{Eq:FacialSets}
    y_a = u t^{a} \quad \text{ if } a \in F, \qquad \text{ and } \qquad  y_a = 0 \quad \text{ if } a \notin F.
\end{align}
Conversely, for every nonempty face $F \subseteq \conv(A)$ and $(u,t) \in \RR_{>0}^{n+1}$ there exists a point $y = (y_a \colon a \in A) \in (Y_{\hat{A}})_{\geq0}$ such that \eqref{Eq:FacialSets} holds true. These claims follow from the discussion in \cite[Section 5.1]{TelenBook}. For the sake of completeness, we recall the arguments here.

By \cite[Corollary 4.1.3]{TelenBook} the affine toric variety $Y_{\hat{A}}$ has the stratifiaction
\[ Y_{\hat{A}} = \bigsqcup_{\substack{F \subseteq \conv(A)\\ F \neq \emptyset}} Y_{\hat{A},F}^\circ\]
where $Y_{\hat{A},F}^\circ = \{\, y \in Y_{\hat{A}} \, \mid \, y_a \neq 0 \text{ if and only if }  a \in F \cap A\,\}$. Intersecting with $\RR^m_{\geq0}$ gives
\begin{align}
\label{eq:proofToric1}
  ( Y_{\hat{A}})_{\geq 0} =  Y_{\hat{A}} \cap  \RR^m_{\geq0}  = \bigsqcup_{\substack{F \subseteq \conv(A)\\ F \neq \emptyset}} 
 \Big( Y_{\hat{A},F}^\circ  \cap  \RR^m_{\geq0} \big).
 \end{align}
By \cite[Proposition 4.1.5]{TelenBook}, the map
\begin{align}
\label{eq:proofToric2} \varphi_{A \cap F} \colon \; (\CC^*)^{n+1} \to Y_{\hat{A},F}^\circ, \qquad (u,t) \mapsto  \varphi_{A \cap F}(u,t) =  \begin{cases}
    u t^{a} & \text{ if } a \in A\cap F \\
    0  & \text{ if } a \notin A\cap F 
\end{cases} 
\end{align}
is well-defined and surjective. This gives the second claim above, that is, for every nonempty face $F \subseteq \conv(A)$ and $(u,t) \in \RR^{n+1}$, we have $\varphi_{A\cap F}(u,t) \in (Y_{\hat{A}})_{\geq 0}$.
To prove the first claim, let $y \in (Y_{\hat{A}})_{\geq 0}$. By \eqref{eq:proofToric1} and \eqref{eq:proofToric2}, there exists a nonempty face $F\subseteq \conv(A)$ and $(u,t) \in (\CC^*)^{n+1}$ such that $\varphi_{A \cap F} (u,t) = y$. Taking absolute values we also find a preimage of $y$ with positive entries, since  $y = |y| = |\varphi_{A \cap F} (u,t)| = \varphi_{A \cap F} (|u|,|t|)$.

We now prove the equivalence of  (ii) and (iii).
Let $y \in (Y_{\hat{A}})_{\geq 0}, \, F \subseteq \conv(A)$ and $(u,t) \in \RR^{n+1}_{\geq 0}$ satisfying \eqref{Eq:FacialSets}.
For a fixed matrix $V = (v_{ab})$ as in \eqref{eq:DefV}, we have the equality
\[
    u^kf(t) = \hat{f}_V(y) = \sum_{b \in k\cdot A} c_b \prod_{a \in A} y_a^{v_{a b}} = \sum_{ {b \in k\cdot A}} c_b \prod_{a \in F \cap A} y_a^{v_{a b}} = \sum_{b \in k\cdot(F \cap A) } c_b \prod_{a \in A} y_a^{v_{a b}} = f^{k\cdot(F \cap A)}(t),
\]
where the third equality follows from \eqref{Eq:FacialSets}, and in the fourth equality we use that $v_{ab} = 0$ for all $b\in k \cdot A$ and $a \notin F$.
This shows (ii) $\Leftrightarrow$ (iii).

In the last part of the proof, we show the equivalence of (iii) and (iv). A vector lies on a face $G \subseteq\conv(k\cdot A)$ with inner normal vector $w$ if and only if it belongs to $k \cdot F$, where $F$ is the face of $\conv(A)$ with inner normal vector $w$. In particular, we have $G \cap (k \cdot A) = k \cdot (F \cap A)$, and (iii) $\Leftrightarrow$ (iv) follows.
\end{proof}

\begin{example}
\label{Ex:Running:Interior}
    We revisit the polynomial $f$ from Example~\ref{Ex:Running}. Let $A \subseteq \mathbb{Z}^4$ denote the support of $f$. First, we consider the following edges of $\conv(A)$ (cf. Figure~\ref{FIG1:NewtonPoly})
    \[F_1 =\conv(\, (0,0,0,3), \, (0,2,0,1) \,) \quad F_2 =\conv(\, (0,0,1,2), \, (0,2,1,0) \,).  \]
    Since the truncated polynomials 
      \begin{align*}
         f^{F_1} = t_4^3-1.9t_2t_4^2+ t_2^2t_4, \quad f^{F_2} = t_3t_4^2 -1.9t_2t_3t_4 + t_2^2 t_3
    \end{align*}
    are essentially quadratic and univariate, we have that  $f^{F_1}(t) > 0$ and $f^{F_2}(t) > 0$ for all $t \in \mathbb{R}^4_{>0}$. Since $F_1, F_2$ are the only faces of $\conv(A)$ that contains in its relative interior exponent vectors of $f$ whose corresponding coefficient has a negative sign and $F_1 \cap F_2 = \emptyset$, we conclude that $f^{F}(t) > 0$ for all $t \in \mathbb{R}^4_{>0}$.
    Thus, $f \in \interior \mathcal{C}_{A}$ by Proposition \ref{Prop:InteriorOfCA}.
\end{example}

\section{Pólya's theorem for sparse polynomials}
\label{sec:SparsePolya}
In this section, we prove the first extension of Pólya's theorem (Theorem~\ref{Thm:SparsePolya}). Our approach generalizes the argument given in \cite[Theorem 5.5.1]{Marshall} or \cite[Theorem~5.4.1]{Scheiderer2024}, which is used to prove the classical version of Pólya's theorem (Theorem~\ref{Thm:ClassicalPolya}). We adapt this strategy to our setting by combining it with tools from positive toric geometry (cf. Section~\ref{Sec:PosToricGeom}).

\subsection{The Representation Theorem}
\label{Sec:ReprThm}
Let $V$ be an affine $\RR$-variety and $\RR[V]$ its real coordinate ring. We denote $V(\RR) \cong \mathrm{Hom}_\RR(\RR[V], \RR)$ the set of its $\RR$-rational points.

\begin{definition}
    A subset $T \subseteq \RR[V]$ is called \emph{preprime} if $T+T \subseteq T$, $T \cdot T \subseteq T$ and $\QQ_{\ge 0} \subseteq T$.
    A preprime $T \subseteq \RR[V]$ is called \emph{Archimedean} if for all $f \in \RR[V]$ there exists $0< n \in \NN$ such that $n+f, n-f  \in T$.
\end{definition}
We will need the following lemma to prove Archimedeanity of preprimes.
\begin{lemma}[{\cite[Prop.~5.1.3]{Marshall}}]
\label{Lemma:Archimedean}
    Let $T \subseteq \RR[V]$ be a preprime. Then
    \[H_T \coloneqq  \{\,f \in \RR[V] \, \mid \, \exists n \in \mathbb{N} \colon n + f, n - f \in T\,\}\]
    is a subring of $\RR[V]$, and
    $T$ is Archimedean if and only if $H_T = \RR[V]$.
\end{lemma}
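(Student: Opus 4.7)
The plan: the equivalence "$T$ Archimedean $\iff H_T=\RR[V]$" is essentially tautological --- both assertions say that every $f\in \RR[V]$ admits some $n\in\NN$ with $n\pm f\in T$. So the real content of the lemma is verifying that $H_T$ is a subring of $\RR[V]$, which I would do by checking the ring axioms one at a time, using only the three defining closure properties of a preprime ($T+T\subseteq T$, $T\cdot T\subseteq T$, $\QQ_{\ge 0}\subseteq T$).

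For any $q\in\QQ$, choosing $n\in\NN$ large enough ensures $n\pm q\in\QQ_{\ge 0}\subseteq T$, so $\QQ\subseteq H_T$; in particular $0,1\in H_T$. Closure under negation is immediate: if $n\pm f\in T$ then $n\pm (-f)\in T$, so $-f\in H_T$. Closure under addition uses $T+T\subseteq T$: from witnesses $n\pm f,\,m\pm g\in T$ one obtains $(n+m)\pm (f+g) = (n\pm f)+(m\pm g)\in T$, so $f+g\in H_T$.

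The one nontrivial step is closure under multiplication, which I would handle by the polarization identities
\begin{align*}
(n+f)(m+g) + (n-f)(m-g) &= 2(nm + fg),\\
(n+f)(m-g) + (n-f)(m+g) &= 2(nm - fg).
\end{align*}
Both right-hand sides lie in $T$ since $T$ is closed under sums and products. Multiplying each by $\tfrac{1}{2}\in \QQ_{\ge 0}\subseteq T$ and invoking $T\cdot T\subseteq T$ once more yields $nm\pm fg\in T$, so $fg\in H_T$ with bound $k=nm$.

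The sole conceptual obstacle is recognizing the right bilinear identity to isolate $fg$ from products of the witnesses $n\pm f$ and $m\pm g$; once this polarization trick is in hand, the rest is direct bookkeeping with the preprime axioms, and I expect no additional technical difficulty.
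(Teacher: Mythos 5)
Your proof is correct: the equivalence is indeed immediate from the definitions (up to the harmless discrepancy between $n\in\NN$ and $0<n\in\NN$, fixed by replacing $n$ with $n+1$ using $1\in\QQ_{\ge 0}\subseteq T$), and the polarization identities together with multiplication by $\tfrac{1}{2}\in\QQ_{\ge 0}$ are exactly the standard argument. The paper does not prove this lemma but cites \cite[Prop.~5.1.3]{Marshall}, whose proof proceeds along the same lines as yours, so there is nothing to add.
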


\begin{theorem}[{Representation Theorem, \cite[Th.~5.4.4]{Marshall}, \cite[Cor.~5.3.3]{Scheiderer2024}}]
    \label{Thm:RepThm}
    Let $V$ be an affine $\RR$-variety and $T \subseteq \RR[V]$ be an Archimedean preprime. If $\mathcal{K}(T) \coloneqq \{ \, x \in V(\RR) \mid f(x) \ge 0 \ \forall f \in T  \, \}$ and $g(x)>0$ for all $x \in \mathcal{K}(T)$, then $g \in T$.
\end{theorem}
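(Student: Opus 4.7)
The plan is a proof by contradiction via a Zorn's lemma argument, in the spirit of the classical Kadison--Dubois / Krivine representation theorem. Assume $g \notin T$. Apply Zorn's lemma to the poset of preprimes of $\RR[V]$ that contain $T$ but do not contain $g$ (ordered by inclusion), obtaining a maximal such preprime $P$. The Archimedean property is inherited from $T \subseteq P$. The goal is to construct a ring homomorphism $\varphi \colon \RR[V] \to \RR$ with $\varphi(T) \subseteq \RR_{\geq 0}$ and $\varphi(g) \leq 0$; since $V$ is an affine $\RR$-variety, $\varphi$ corresponds to some $x \in V(\RR) \cong \mathrm{Hom}_\RR(\RR[V], \RR)$, and then $x \in \mathcal{K}(T)$ with $g(x) \leq 0$ contradicts the hypothesis $g > 0$ on $\mathcal{K}(T)$.

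The first and main technical step is to prove the dichotomy $P \cup (-P) = \RR[V]$. Given $a \in \RR[V]$ with $a \notin P$ and $-a \notin P$, maximality of $P$ forces the preprimes $P\langle a\rangle = \{\sum_i t_i a^i : t_i \in P\}$ and $P\langle -a\rangle$ to both contain $g$, yielding two polynomial identities expressing $g$ in terms of $\pm a$ with coefficients in $P$. Combining them and using Archimedeanity of $P$ to bound the higher powers of $a$ by integers (which, after suitable multiplication, converts the two polynomial identities into a single linear identity of the form $(q+q')g \in P$ modulo elements already in $P$), one deduces $g \in P$, a contradiction. With the dichotomy established, the set $I := P \cap (-P)$ is shown to be an ideal of $\RR[V]$ using $P - P = \RR[V]$, so the quotient $A := \RR[V]/I$ becomes a totally ordered commutative ring whose positive cone is the image $\overline{P}$ of $P$, and in which $\overline{g} \leq 0$.

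The final step uses the Archimedean property to exhibit an order-preserving ring embedding $A \hookrightarrow \RR$: every element of $A$ is squeezed by integers in the total order, and a standard Dedekind-cut construction produces the corresponding real number, yielding the embedding. Composing $\RR[V] \twoheadrightarrow A \hookrightarrow \RR$ gives the desired $\varphi$, and the resulting point $x \in V(\RR)$ completes the contradiction. The principal obstacle is the dichotomy argument of the second paragraph, where Archimedeanity of $P$ is the crucial lever that turns the two polynomial identities for $g$ into a forced membership $g \in P$; the passage to the ordered quotient and the embedding into $\RR$ are then essentially bookkeeping in the theory of Archimedean ordered rings.
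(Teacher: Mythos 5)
The paper itself gives no proof of this statement --- it is quoted verbatim from Marshall [Th.~5.4.4] and Scheiderer [Cor.~5.3.3] --- so what you are really attempting is a proof of the classical Kadison--Dubois/Becker--Schwartz representation theorem. Your overall architecture (Zorn's lemma, totality of a maximal object, passage to an Archimedean totally ordered quotient, a H\"older-type embedding into $\RR$, and an evaluation point in $\mathcal{K}(T)$) is the right architecture for such theorems. However, the step you yourself call the principal obstacle --- the dichotomy $P \cup (-P) = \RR[V]$ for a preprime $P$ maximal subject to $g \notin P$ --- contains a genuine gap, and it is exactly where the real content of the theorem lies.

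Concretely: since a preprime need not contain squares, the preprime generated by $P$ and $a$ is $P\langle a\rangle=\{\sum_i p_i a^i : p_i \in P\}$, so maximality only gives you two representations $g=\sum_i p_i a^i$ and $g=\sum_j q_j(-a)^j$ of unbounded degree. Your proposed reduction --- ``bound the higher powers of $a$ by integers'' so as to convert these into a single linear relation --- has no actual mechanism behind it: from $n\pm a\in P$ one gets identities such as $a^2=n^2-(n+a)(n-a)$, and substituting them reintroduces terms of uncontrolled sign, so powers of $a$ cannot simply be traded for integers inside a membership statement for $P$. Worse, even if you did reach a relation of the shape $s\,g\in P$ with $s\in P$ (your ``$(q+q')g\in P$ modulo $P$''), the final inference $g\in P$ is unjustified: a preprime admits no division, $s$ need not be bounded below by a positive rational, and this ``division'' step is precisely the crux that the classical proofs spend their effort on. This is also why the cited sources do not argue with maximal preprimes at all: Marshall's Theorem 5.4.4 is proved in the setting of Archimedean $T$-modules, where adjoining an element $c$ produces $M+Tc$, linear in $c$, so the two representations combine into a relation of the form $(t_1+t_2)a\in M$, and even then a separate, nontrivial lemma exploiting Archimedeanity and maximality is needed to conclude; Scheiderer's treatment likewise does not run through maximal preprimes. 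The remaining steps of your outline (that $P-P=\RR[V]$, that $P\cap(-P)$ is an ideal, the embedding of the Archimedean ordered quotient into $\RR$, and that the resulting ring homomorphism is automatically an $\RR$-algebra homomorphism, hence a point of $V(\RR)$) are standard and fine as bookkeeping --- but they all presuppose the dichotomy, and your argument for it would not survive being written out. To repair the proof you should either work with modules over $T$ as in the cited references, or cite the theorem as the paper does.
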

We denote by $\RR_{\ge 0}[t^{a_1},\ldots,t^{a_m}]$ the set of polynomials in $t^{a_1},\ldots,t^{a_m}$ with nonnegative coefficients, i.e. the preprime generated by $\RR_{\ge 0}$ and $t^{a_1},\ldots,t^{a_m}$.
\begin{lemma}
\label{Lemma:Archimedian}
     Let $A = \{ \, a_1, \dots, a_m \, \}$. The preprime
     \[
     T = \RR_{\geq 0}[t^{a_1},\ldots,t^{a_m}]+\langle 1-\sum_{i=1}^m t^{a_i}\rangle \subseteq \RR[t^{a_1}, \dots , t^{a_m}]
     \]
     generated by $\RR_{\geq0}$, $t^{a_1}, \dots , t^{a_m}$ and by the ideal $\langle 1-\sum_{i=1}^m t^{a_i}\rangle$ 
     is Archimedean.
\end{lemma}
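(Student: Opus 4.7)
The plan is to invoke Lemma~\ref{Lemma:Archimedean}: it suffices to show that the ring $H_T$ equals the full coordinate ring $\RR[t^{a_1}, \dots , t^{a_m}]$. Since $H_T$ is a subring (by Lemma~\ref{Lemma:Archimedean}), and $\RR[t^{a_1}, \dots , t^{a_m}]$ is generated as a ring by $\RR$ together with the monomials $t^{a_1}, \dots, t^{a_m}$, I only need to verify two things:
\begin{itemize}
    \item[(a)] every constant $r \in \RR$ lies in $H_T$;
    \item[(b)] each generator $t^{a_i}$ lies in $H_T$.
\end{itemize}

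For (a), given $r \in \RR$ pick any integer $n \geq |r|$. Then $n+r$ and $n-r$ are nonnegative real numbers, hence elements of $\RR_{\geq 0}[t^{a_1},\ldots,t^{a_m}] \subseteq T$, so $r \in H_T$.

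For (b), first note that $n + t^{a_i} \in T$ for every $n \in \NN$ since both summands have nonnegative coefficients. For the reverse direction, I will verify $1 - t^{a_i} \in T$ using the key identity
\[
    1 - t^{a_i} = \Big(1 - \sum_{j=1}^m t^{a_j}\Big) + \sum_{j \neq i} t^{a_j}.
\]
The first summand lies in the ideal $\langle 1 - \sum_j t^{a_j}\rangle \subseteq T$, and the second summand lies in $\RR_{\geq 0}[t^{a_1},\ldots,t^{a_m}] \subseteq T$. Since $T$ is closed under addition, $1 - t^{a_i} \in T$, proving that $t^{a_i} \in H_T$.

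There is no serious obstacle here: the presence of the ideal generator $1 - \sum_j t^{a_j}$ in $T$ is precisely what forces boundedness of each monomial from above, and the sum-of-monomials part of $T$ gives boundedness from below for free. Once (a) and (b) are established, $H_T$ contains a generating set of the ring and, being itself a ring, must equal all of $\RR[t^{a_1}, \dots, t^{a_m}]$, so Lemma~\ref{Lemma:Archimedean} yields that $T$ is Archimedean.
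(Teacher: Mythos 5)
Your proposal is correct and follows essentially the same route as the paper: apply Lemma~\ref{Lemma:Archimedean}, note that constants are bounded by naturals, and use the identity $1 - t^{a_i} = \bigl(1-\sum_{j} t^{a_j}\bigr) + \sum_{j\neq i} t^{a_j}$ to place each generator in $H_T$. No issues.
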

\begin{proof}
    By Lemma~\ref{Lemma:Archimedean}, it is enough to show that $t^a \in H_{T}$ for all $a \in A$ and $\mathbb{R} \subseteq T$. The latter inclusion follows directly from the fact that for every $s \in \mathbb{R}$ there exists $n \in \mathbb{N}$ such that $n + s \in \mathbb{R}_{\geq0} \subseteq T$. To see that  $t^a \in H_{T}$, note that $1+t^a \in T$ by construction. Moreover, $1 - t^a = \sum_{b \in A, b \neq a} t^b + \big( 1 - \sum_{b \in A} t^b \big) \in T$. Thus, we have $t^a \in H_T$ for each $a \in A$.
\end{proof}

We can lift a representation of $f\in T$ to a P\'olya-type certificate of copositivity by clearing denominators carefully. This is a key step in all our extensions to P\'olya's theorem.
\begin{proposition}
    \label{prop:polya}
    Let $T \subseteq \RR[t^{a_1}, \dots , t^{a_m}]$ be the preprime as in Lemma~\ref{Lemma:Archimedian}. If $f \in \RR[t_1^{\pm}, \dots , t_n^{\pm}]_{k\cdot A}$ is supported on $k\cdot A$ and $f \in T$, then there exist $N \in \NN$ such that \[
    \big( \sum_{i=1}^m t^{a_i} \big)^N f \in \RR_{\ge 0}[t^{a_1},\ldots,t^{a_m}].
    \]
\end{proposition}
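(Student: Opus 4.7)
The plan is to exploit the $A$-grading on $\RR[t^{a_1},\dots,t^{a_m}]=\RR[Y_{\hat{A}}]$ from~\eqref{Eq:CoordinateRingXA}, whose degree-$j$ piece is spanned by the monomials $t^b$ with $b\in j\cdot A$. Starting from a representation $f=P+(1-s)Q$ provided by $f\in T$, with $P\in \RR_{\geq 0}[t^{a_1},\dots,t^{a_m}]$, $Q\in \RR[t^{a_1},\dots,t^{a_m}]$, and $s=\sum_{i=1}^{m} t^{a_i}$, I would run a forward recursion on the $A$-homogeneous components of $Q$ to extract a P\'olya-type certificate.

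First, I would decompose $P=\sum_{j\geq 0} P_j$ and $Q=\sum_{j\geq 0} Q_j$ into their $A$-homogeneous pieces; each $P_j$ is still in $\RR_{\geq 0}[t^{a_1},\dots,t^{a_m}]$, since the $A$-homogeneous components of an element with nonnegative coefficients are obtained by restricting its monomials. Because $f$ is supported on $k\cdot A$ and hence $A$-homogeneous of degree $k$, comparing components in $f=P+(1-s)Q$ gives $P_j+Q_j-sQ_{j-1}=0$ for $j\neq k$ and $P_k+Q_k-sQ_{k-1}=f$ for $j=k$ (with $Q_{-1}=0$). Solving the recursion $Q_j=sQ_{j-1}-P_j$ forward up to $j=k-1$ yields $Q_{k-1}=-\sum_{i=0}^{k-1}s^{k-1-i}P_i$, and substituting this into the degree-$k$ equation produces $Q_k=f-\tilde P$, where
\[
\tilde P\coloneqq \sum_{i=0}^{k}s^{k-i}P_i\in \RR_{\geq 0}[t^{a_1},\dots,t^{a_m}].
\]
A short induction on $l\geq 0$ then continues the recursion beyond $j=k$ and gives $Q_{k+l}=s^l(f-\tilde P)-\sum_{i=1}^{l}s^{l-i}P_{k+i}$.

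Since $Q$ has only finitely many nonzero $A$-homogeneous components, there is a largest $M$ with $Q_M\neq 0$, so $Q_{M+1}=0$. Substituting $l=M-k+1$ into the formula above and rearranging yields
\[
s^{M-k+1}f\ =\ s^{M-k+1}\tilde P\ +\ \sum_{i=1}^{M-k+1}s^{M-k+1-i}P_{k+i},
\]
which is manifestly a sum of products of elements of $\RR_{\geq 0}[t^{a_1},\dots,t^{a_m}]$, giving the desired P\'olya certificate with $N=M-k+1$ when $M\geq k$. The degenerate case $M<k$ forces $Q_k=0$, which collapses the degree-$k$ equation to $f=\tilde P\in \RR_{\geq 0}[t^{a_1},\dots,t^{a_m}]$, so $N=0$ already works. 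I do not anticipate a serious obstacle; the only structural input is that the $A$-grading descends cleanly to $\RR[Y_{\hat{A}}]$, which holds because the toric ideal $I_{\hat{A}}$ is generated by binomials that preserve the total degree.
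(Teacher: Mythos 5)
Your proof is correct, and it reaches the conclusion by a genuinely different mechanism than the paper. The paper also starts from a representation $f=g+h\,(1-s)$ with $s=\sum_{i=1}^m t^{a_i}$, but then disposes of the ideal term in one stroke: it substitutes $t^{a_i}\mapsto t^{a_i}/s$, i.e.\ rescales onto the slice $\{\sum_i y_i=1\}$ of the cone $Y_{\hat{A}}$, so the factor $1-s$ vanishes identically while $f$ becomes $f/s^k$; clearing denominators with $N=\max\{k,\deg g\}$ finishes. You instead stay inside the graded ring and eliminate the $(1-s)Q$ term by a telescoping recursion on the graded components of $Q$ — in effect a denominator-free, purely algebraic version of the same rescaling — which buys an explicit exponent $N=\deg Q-k+1$ (the paper's exponent is governed by $\deg g$ instead) at the price of some bookkeeping; your index computations and the degenerate case $M<k$ check out. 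Both arguments hinge on the same structural point, which you correctly isolate at the end: the identity $f=P+(1-s)Q$ has to be read in the graded ring $\RR[Y_{\hat{A}}]\cong\RR[y_1,\dots,y_m]/I_{\hat{A}}$, i.e.\ modulo only the degree-preserving toric relations, because inside $\RR[t_1^{\pm},\dots,t_n^{\pm}]$ a monomial $t^b$ has no well-defined $A$-degree when the sets $j\cdot A$ overlap for different $j$. This is precisely the form in which the representation is produced in the proof of Theorem~\ref{Thm:SparsePolya} (the Representation Theorem is applied on $Y_{\hat{A}}$), and the paper's substitution requires the same reading, since the rescaled point $(t^{a_1}/s,\dots,t^{a_m}/s)$ lies on $Y_{\hat{A}}$ but in general not on the affine toric variety cut out by all relations among the $t^{a_i}$; so this is a convention shared with the paper rather than a gap on your side, though spelling out that one works with lifts modulo $I_{\hat{A}}$ (and only at the very end evaluates $y_i\mapsto t^{a_i}$) would make your write-up airtight.
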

\begin{proof}
    By hypothesis, we can write 
    \[
        f = g + h \cdot (1-\sum_{i=1}^m t^{a_i})
    \]
    for some $g =  \sum d_v (t^{a_1})^{v_1} \dots (t^{a_m})^{v_m}\in \RR_{\ge 0}[t^{a_1},\ldots,t^{a_m}]$ and $h \in \RR[t^{a_1}, \dots , t^{a_m}]$. Since $f \in \RR[t_1^{\pm}, \dots , t_n^{\pm}]_{k\cdot A}$, substituting $t^{a_i} \mapsto \tfrac{t^{a_i}}{\sum_{i=1}^m t^{a_i}}$ gives 
    \[
        \frac{f}{(\sum_{a \in A} t^a)^k} = \sum d_v \tfrac{1}{ (\sum_{a \in A} t^a)^{v_1+ \dots + v_m}} (t^{a_1})^{v_1} \dots (t^{a_m})^{v_m}
    \]
   Clearing denominators in the above equality by multiplying both sides with $(\sum_{a \in A}t_a)^N$ for $N = \max\{k , \max_{v} \{ v_1 + \dots +v_m\}\}$ yields the desired result.
\end{proof}

\subsection{First extension of Pólya's theorem}
We now prove our first main result. For $f\in \RR[t_1^\pm,\ldots,t_n^\pm]_A$ we say that $f$ has positive coefficients if for every point $a \in A$, the monomial $t^a$ appears with strictly positive coefficient.

\begin{theorem}
\label{Thm:SparsePolya}
Let $A = \{ \,a_1, \dots , a_m \,\} \subseteq \ZZ^n$ be a finite set, $k \in \NN$ and $f \in \mathbb{R}[t_1^{\pm}, \dots , t_n^{\pm}]_{k \cdot A}$. Then the following are equivalent.
\begin{enumerate}
    \item[(i)] $f$ is strictly $A$-copositive.
    \item[(ii)] $\Newt(f) = \conv(k \cdot A)$, and there exists $M \in \NN$ s.t. $(\sum_{i=1}^m t^{a_i})^M f\in \RR[t_1^\pm,\ldots,t_n^\pm]_{(k+M)\cdot A}$ has positive coefficients.
    \item[(iii)] $\Newt(f) = \conv(k \cdot A)$, and there exists $N \in \NN$ s.t. $(\sum_{i=1}^m t^{a_i})^N f\in \RR[t_1^\pm,\ldots,t_n^\pm]_{(k+N)\cdot A}$ has nonnegative coefficients.
\end{enumerate}
\end{theorem}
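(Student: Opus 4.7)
The plan is to prove the cycle (i) $\Rightarrow$ (ii) $\Rightarrow$ (iii) $\Rightarrow$ (i). The implication (ii) $\Rightarrow$ (iii) is immediate, so the real work lies in the two remaining implications.

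For (i) $\Rightarrow$ (ii), I first handle the Newton polytope condition. The vertices of $\conv(k\cdot A)$ are precisely $kv$ for $v$ a vertex of $\conv(A)$, and Proposition~\ref{Prop:InteriorOfCA_bis}(iv) applied to the zero-dimensional face $G = \{kv\}$ forces $f^{\{kv\}} = c_{kv}\,t^{kv}$ to be strictly positive on $\RR^n_{>0}$, hence $c_{kv} > 0$. This places every vertex of $\conv(k\cdot A)$ in $\supp(f)$, so $\Newt(f) = \conv(k\cdot A)$. For the Pólya representation I invoke Section~\ref{Sec:ReprThm}: identify $\RR[Y_{\hat{A}}] = \RR[t^{a_1},\dots,t^{a_m}]$ and let $T$ be the Archimedean preprime of Lemma~\ref{Lemma:Archimedian}. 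Its nonnegativity set $\mathcal{K}(T) = \{\,y\in(Y_{\hat{A}})_{\geq 0}\mid\sum_i y_i = 1\,\}$ is a compact section of the nonnegative toric cone, and since $\sum_i t^{a_i}$ is identically $1$ on $\mathcal{K}(T)$, for $\epsilon > 0$ smaller than $\min_{\mathcal{K}(T)} f > 0$ the perturbation $f_\epsilon := f - \epsilon(\sum_i t^{a_i})^k$ remains strictly positive on $\mathcal{K}(T)$. Theorem~\ref{Thm:RepThm} then gives $f_\epsilon \in T$, and Proposition~\ref{prop:polya} produces some $N \in \NN$ with $(\sum_i t^{a_i})^N f_\epsilon$ having nonnegative coefficients. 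Writing
\[
\Bigl(\sum_{i=1}^m t^{a_i}\Bigr)^N f = \Bigl(\sum_{i=1}^m t^{a_i}\Bigr)^N f_\epsilon + \epsilon \Bigl(\sum_{i=1}^m t^{a_i}\Bigr)^{N+k},
\]
the second summand has strictly positive multinomial coefficient at every $b \in (N+k)\cdot A$, so summing with the nonnegative first term yields positive coefficients throughout, and $M = N$ works.

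For (iii) $\Rightarrow$ (i), I verify condition~(iv) of Proposition~\ref{Prop:InteriorOfCA_bis}. Fix a nonempty face $G \subseteq \conv(k\cdot A) = \Newt(f)$ with inner normal $w$, and let $F$ be the corresponding face of $\conv(A)$. By multiplicativity of initial forms~\eqref{Eq:InitialFormsFactor},
\[
\initial_w\Bigl(\bigl(\sum_{i=1}^m t^{a_i}\bigr)^N f\Bigr) = \Bigl(\sum_{a\in F\cap A} t^a\Bigr)^N \cdot f^G.
\]
The left-hand side is a truncation of a polynomial with nonnegative coefficients, so it also has nonnegative coefficients; moreover it is nonzero because $\Newt(f) = \conv(k\cdot A)$ guarantees that $G$ contains at least one vertex of $\Newt(f)$, hence a support point of $f$, so $f^G \not\equiv 0$. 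A nonzero polynomial with nonnegative coefficients is strictly positive on $\RR^n_{>0}$, so dividing by the pointwise positive factor $(\sum_{a \in F\cap A} t^a)^N$ yields $f^G > 0$ on $\RR^n_{>0}$, as required.

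The main obstacle is the gap between the ``nonnegative coefficients'' produced directly by Theorem~\ref{Thm:RepThm} combined with Proposition~\ref{prop:polya} and the strictly ``positive coefficients'' demanded by (ii). It is closed by the perturbation $f \mapsto f - \epsilon(\sum_i t^{a_i})^k$, which exploits the fact that $\sum_i t^{a_i} \equiv 1$ on the compact slice $\mathcal{K}(T)$, so a small subtraction preserves strict positivity there while the added-back term $\epsilon(\sum_i t^{a_i})^{N+k}$ contributes a strictly positive coefficient at every exponent in $(N+k)\cdot A$.
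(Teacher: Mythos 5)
Your proof is correct and follows essentially the same route as the paper: the Archimedean preprime with the Representation Theorem and Proposition~\ref{prop:polya} for (i)$\Rightarrow$(ii), the perturbation $f-\epsilon\bigl(\sum_i t^{a_i}\bigr)^k$ (exploiting $\sum_i t^{a_i}\equiv 1$ on $\mathcal{K}(T)$) to upgrade nonnegative to strictly positive coefficients, and multiplicativity of initial forms combined with Proposition~\ref{Prop:InteriorOfCA_bis} for (iii)$\Rightarrow$(i). The only cosmetic differences are that you perturb before invoking the Representation Theorem (the paper applies it twice, first to $f$ and then to $f-\epsilon(\sum_i t^{a_i})^k$) and that you deduce $\Newt(f)=\conv(k\cdot A)$ from positivity of the vertex coefficients via Proposition~\ref{Prop:InteriorOfCA_bis}(iv) rather than from the Minkowski-sum property of Newton polytopes.
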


\begin{proof}
We first show (i) $\Rightarrow$ (ii). Let $T$ be the Archimedean preprime in Lemma~\ref{Lemma:Archimedian}. Since $f$ is strictly $A$-copositive, it is strictly positive on \[(Y_{\hat{A}})_{\geq 0} \cap \{ \,1 - \sum_{a \in A} t^a =0\, \} = \{ \, x \in Y_{\hat{A}}(\RR) \mid f(x) \ge 0 \ \forall f \in T  \, \} = \mathcal{K}(T).\]
It follows from the Representation Theorem \ref{Thm:RepThm} that $f \in T$. Proposition~\ref{prop:polya} then implies that there exist $N \in \NN$ such that 
\[
    \big( \sum_{i=1}^m t^{a_i} \big)^N f \in \RR_{\ge 0}[t^{a_1},\ldots,t^{a_m}].
\]
This means that $( \sum_{i=1}^m t^{a_i} )^N f$ has a representation as a degree $k+N$ polynomial in $t^{a_1}, \dots , t^{a_m}$ with nonnegative coefficients.

We can make all coefficients positive {by possibly increasing $N$}. Since $\mathcal{K}(T)$ is compact and $f$ is strictly positive, there exists $\varepsilon > 0$ such that $f > 2\varepsilon$ on $\mathcal{K}(T)$. Notice that $\sum_{i=1}^m t^{a_i}$ is constant and equal to one on $\mathcal{K}(T)$, and hence $f - \varepsilon (\sum_{i=1}^m t^{a_i})^k > 0$ on $\mathcal{K}(T)$. From the previous part of the proof, there exists $M \in \NN$ such that
\[
    \big( \sum_{i=1}^m t^{a_i} \big)^M \big(f - \varepsilon \big(\sum_{i=1}^m t^{a_i}\big)^k\big) = \big( \sum_{i=1}^m t^{a_i} \big)^M f - \varepsilon \big( \sum_{i=1}^m t^{a_i} \big)^{M+k} \in \RR_{\ge 0}[t^{\pm}_1,\ldots,t_n^\pm]_{(k+M)\cdot A}
\]
As the degree $M+k$ polynomial $(\sum_{i=1}^m t^{a_i})^{M+k}$ has only (strictly) positive coefficients in the variables $t^{a_1}, \dots , t^{a_m}$, the above implies that $\big( \sum_{i=1}^m t^{a_i} \big)^M  f\in \RR_{> 0}[t^{\pm}_1,\ldots,t_n^\pm]_{(k+M)\cdot A}$.  This also implies that $\Newt(f) = \conv(k \cdot A)$, since the Newton polytope of a product of polynomials is the Minkowski sum of their Newton polytopes, and concludes the proof of (i) $\Rightarrow$ (ii).

The implication (ii) $\Rightarrow$ (iii) is trivial.

We now show that (iii) $\Rightarrow$ (i). Pick $N \in \mathbb{N}$ such that $g f$ has nonnegative coefficients for $g = (\sum_{i =1}^m t^{a_i})^N$. By~\eqref{Eq:InitialFormsFactor}, for all $w \in \mathbb{R}^n$ we have that $\initial_w ( g f  ) = \initial_w (g) \initial_w (f)$.
Since both $\initial_w ( g f  ) $ and $\initial_w ( g)$ are polynomials with nonnegative coefficients, both are positive on $\mathbb{R}^n_{>0}$. 
Thus $\initial_w(f)(x) > 0$ for all $x \in \mathbb{R}^n_{>0}$ and $w \in \mathbb{R}^n$ as well. Under the assumption $\Newt(f) = \conv(k \cdot A)$, the initial form $\initial_w(f)$ coincides with the truncation of $f$ to the face $G$ of $\conv(k\cdot A)$ with inner normal vector $w$. In particular, $f^{G}(x) > 0$ for all $x \in \mathbb{R}^n_{>0}$. Using Proposition~\ref{Prop:InteriorOfCA_bis}, we conclude that $f$ is strictly $A$-copositive. \end{proof}

\begin{example}
\label{Ex:Running:Sparse}
We return to the polynomial $f$ from Example~\ref{Ex:Running}. In Example~\ref{Ex:Running:Interior}, we showed that $f \in \interior \mathcal{C}_{A}$ for $A = \supp(f)$. By Theorem~\ref{Thm:SparsePolya}, it follows that for $N$ large enough $(\sum_{a \in A}t^a)^Nf$ has positive coefficients. In this example, the smallest $N$ that gives such a certificate of copositivity is $N = 14$, for which the product has 4096 monomials.
\end{example}

\section{Pólya's theorem via Cox coordinates}
\label{sec:Cox}
In this section, we discuss a strategy, alternative to Theorem~\ref{Thm:Main}, for certifying the copositivity of sparse polynomials using Cox coordinates on the toric variety $X_A$. Although this method yields a Pólya-type certificate only when $\conv(A)$ is a product of simplices, in this case it may provide a certificate that is more efficient to compute than the one obtained from Theorem~\ref{Thm:Main}, see Example~\ref{Ex:Running:Cox} below.

\subsection{Cox homogenization}
A polynomial $f$ that is strictly $A$-copositive, might still vanish on the boundary of $\mathbb{R}^n_{\geq0}$, see Examples~\ref{Ex:Running} and~\ref{Ex:Running:Interior}. To determine on which coordinate hyperplanes these zeros occur, it is useful to translate $f$ via a monomial change of coordinates. To that end, we recall a well-known construction from toric geometry: \emph{homogenization in the Cox ring}. For further details on that topic, we refer to \cite[Section 6]{telen2022introductiontoricgeometry} and \cite[Chapter 5]{CoxLittleSchenck}.

For the rest of the section, we fix a finite set $A \subseteq \mathbb{Z}^n$ and assume that $\conv(A)$ is an $n$-dimensional polytope.
We denote by $\Sigma_A$ the inner normal fan of $\conv(A)$, and write $\Sigma_A(d)$ for the set of $d$-dimensional cones in $\Sigma_A$.
We index the rays, that is elements in $\Sigma_A(1)$, by $1,\dots,r$. For each ray, we denote the first lattice point on the ray by $F_i, i=1,\dots, r$.
Furthermore, we call the matrix $F\in \ZZ^{n\times r}$ with columns $F_1, \dots, F_r$ the \emph{ray generator matrix} of $\Sigma_A$. We write $b \in \ZZ^r$ for the vector such that $\conv(A)$ is given by the halfspace description
\begin{align}
\label{Eq:FacetDescr}
    \conv(A) =\{\, a \in \RR^n \,|\, F^\top a+b\geq 0\,\},
\end{align}
where the inequality is read coordinatewise and $F^\top$ is the transpose of $F$. The \emph{Cox homogenization} of $f = \sum_{a \in A}c_at^a$ is given by
\begin{align}
\label{Eq:CoxHom}
    f_\cox(x)\coloneqq x^b f\big(\varphi_{F^\top}(x)\big) = \sum_{a\in A}c_a x^{F^\top a+b}\in \RR[x_1,\dots,x_r],
\end{align}
where $\varphi_{F^\top}$ denotes the monomial map associated with the matrix $F^\top$, cf. \eqref{Eq:MonomialMap}.
Since $f_{\mathrm{cox}}$ has nonnegative exponents, it defines a function on $\mathbb{R}^r_{\geq0}$.
Similar to \eqref{eq:ftildeCD}, we summarize the relevant functions into the diagram 
       \begin{equation}
     \label{eq:CoxDiagram}
                  \begin{tikzcd}
\RR_{>0}^r\arrow[rr,"\varphi_{F^\top}"]\arrow[d,hookrightarrow] & & \RR^n_{>0}\arrow[d,"f"] \\
           \RR^r_{\geq0} \arrow[rr,"f_{\cox}"]  & & \RR
         \end{tikzcd}
     \end{equation}
Although the diagram \eqref{eq:CoxDiagram} does not commute, the copositivity of $f$ and $f_\cox$ are equivalent, since $f_\cox$ and $f \circ \varphi_{F^\top}$ differ only by multiplication with $x^b$.
\begin{lemma}
\label{Lemma:CoxSparseCopisitive}
Let $A \subseteq \mathbb{Z}^n$ be a finite set such that $\conv(A)$ has dimension $n$. For $f \in \RR[t_1^\pm , \dots, t_n^\pm]_A$ denote $f_{\cox}$ its Cox homogenization as in \eqref{Eq:CoxHom}. Then $f$ is $A$-copositive if and only if $f_\cox(x) \geq 0$ for all $x \in \mathbb{R}^r_{\geq0}$. 
\end{lemma}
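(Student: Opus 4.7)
The plan is to transfer nonnegativity back and forth along the monomial map $\varphi_{F^\top}$ that underlies the Cox homogenization~\eqref{Eq:CoxHom}. By \eqref{Eq:CkA_correspondence} in the proof of Proposition~\ref{Prop:InteriorOfCA}, $A$-copositivity of $f$ is equivalent to $f \geq 0$ on $\RR^n_{>0}$, so it suffices to show that $f \geq 0$ on $\RR^n_{>0}$ if and only if $f_{\cox} \geq 0$ on $\RR^r_{\geq 0}$. I first note that $f_{\cox}$ is an honest polynomial in $x_1,\ldots,x_r$: the exponent vectors $F^\top a + b$ in \eqref{Eq:CoxHom} are componentwise nonnegative by the halfspace description \eqref{Eq:FacetDescr} of $\conv(A)$, so it makes sense to evaluate $f_{\cox}$ on all of $\RR^r_{\geq 0}$.

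For the forward direction, I assume $f \geq 0$ on $\RR^n_{>0}$. For any $x \in \RR^r_{>0}$, one has $\varphi_{F^\top}(x) \in \RR^n_{>0}$ and $x^b > 0$, and hence $f_{\cox}(x) = x^b f(\varphi_{F^\top}(x)) \geq 0$ by the definition of $f_{\cox}$. Since $f_{\cox}$ is a polynomial and $\RR^r_{>0}$ is dense in $\RR^r_{\geq 0}$, continuity extends the inequality to all of $\RR^r_{\geq 0}$.

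The converse requires the surjectivity of $\varphi_{F^\top}\colon \RR^r_{>0} \to \RR^n_{>0}$. Via the coordinatewise logarithm, this map is conjugate to the linear map $F\colon \RR^r \to \RR^n$ whose columns are the primitive ray generators $F_1,\ldots,F_r$ of $\Sigma_A$. Since $\conv(A)$ is $n$-dimensional, the inner normal fan $\Sigma_A$ is complete, its rays span $\RR^n$, and hence $F$ is surjective. Assuming $f_{\cox} \geq 0$ on $\RR^r_{\geq 0}$, for any $t \in \RR^n_{>0}$ I pick $x \in \RR^r_{>0}$ with $\varphi_{F^\top}(x) = t$; then $0 \leq f_{\cox}(x) = x^b f(t)$, and dividing by $x^b > 0$ yields $f(t) \geq 0$. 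The only nontrivial step is the surjectivity of $\varphi_{F^\top}$, which is a standard consequence of $\conv(A)$ being full-dimensional; everything else is a direct unwinding of the definitions.
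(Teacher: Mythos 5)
Your proof is correct and follows essentially the same route as the paper: both reduce $A$-copositivity to nonnegativity of $f$ on $\RR^n_{>0}$, prove surjectivity of $\varphi_{F^\top}\colon \RR^r_{>0}\to\RR^n_{>0}$ via the logarithmic conjugation to the full-rank linear map $F$, transfer nonnegativity through the identity $f_\cox(x)=x^b f(\varphi_{F^\top}(x))$, and extend to $\RR^r_{\geq 0}$ by continuity. No gaps.
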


\begin{proof}
First we show that the monomial map $\varphi_{F^\top}\colon \mathbb{R}^r_{>0} \to \mathbb{R}^n_{>0}$ is surjective. To that end, consider the commutative diagram
     \begin{equation*}
                  \begin{tikzcd}
\RR_{>0}^r\arrow[rr,"\varphi_{F^\top}"]\arrow[d,"\Log"] & & \RR^n_{>0} \\
           \RR^r \arrow[rr,"F"]  & & \RR^n\arrow[u,"\Exp"],
         \end{tikzcd}
     \end{equation*}
where $\Log$ and $\Exp$ denote the coordinate-wise natural logarithm map and exponential map respectively, and $F$ is the linear map given by the matrix $F$. Since $\conv(A)$ is $n$-dimensional, it follows that $F$ has full rank $n$, and therefore the linear map $F\colon \RR^r \to \RR^n$ is surjective. Since $\Log$ and $\Exp$ are bijective, we conclude that $\varphi_{F^\top} = \Exp \, \circ \, F \circ \Log$ is surjective.

With this, it follows that $f_{\cox} = x^b f(\varphi_{F^\top})$ is copositive if and only if $f$ is copositive. Since $f_\cox$ defines a continuous function on $\RR^r_{\geq0}$, the result follows. \end{proof}

\begin{example}
\label{Ex:RunningDehom}
    To illustrate the above constructions, we consider a dehomogenized version of~\eqref{Eq:Running} from Example~\ref{Ex:Running} obtained by setting $t_4 = 1$. 
      \begin{align}
        \label{Eq:RunningDehom}
         f = 1+t_1 - 1.9t_2 + t_2^2  +t_3 + t_1t_3-1.9t_2t_3+t_2^2t_3.
    \end{align}
    Let $A \subseteq \ZZ^3$ denote the support of $f$.
The polytope $\conv(A)$ is full-dimensional and its given by the inequalities $F^\top a + b \geq 0$, where
\begin{align*}
    F^\top=\begin{pmatrix}
        1 & 0 & 0 \\
        0 & 1 & 0 \\
        -2 & -1 & 0 \\
        0 & 0 & 1 \\
        0 & 0 & -1
    \end{pmatrix} \in \mathbb{Z}^{5 \times 3},\qquad  b=\begin{pmatrix}
        0 \\ 0 \\2\\0\\1
    \end{pmatrix}.
\end{align*}
By computing $F^\top a+b$ for each $a \in A$ we get the Cox homogenization of $f$ in~\eqref{Eq:RunningDehom}
\begin{align}
\label{Eq:RunningCoxDehom}
f_\cox = x_3^2x_5 + x_1x_5 -1.9 x_2x_3x_5 + x_2^2x_5+x_3^2x_4 + x_1x_4-1.9x_2x_3x_4+x_2^2x_4.
 \end{align}
 We will return to this polynomial in Example~\ref{Ex:Running:Cox}.
\end{example}

In Proposition \ref{Prop:InteriorOfCA_bis}, we  characterized strictly $A$-copositive polynomials using truncations. In Proposition~\ref{prop:interiorCAcox} below, we give a similar characterization in terms of Cox homogenization.
Using the facet description of $\conv(A)$ in \eqref{Eq:FacetDescr}, the relative interior of every face of $\conv(A)$ has the form
\begin{align}
\label{Eq:JFaces}
 G_I \coloneqq \{ \, a \in \RR^n \, \mid \, F_i \cdot a + b_i = 0 \text{ for all } i \in I \text{ and }  F_i \cdot a + b_i > 0 \text{ for all } i \in [r] \setminus I \, \}
 \end{align}
for some set $I \subseteq [r]=\{1,\ldots,r\}$ of active constraints. Using this notation for the faces of $\conv(A)$, we relate truncations of $f$ to $f_\cox$.

\begin{lemma}
\label{lem:coxtruncation}
Let $A \subseteq \mathbb{Z}^n$ be a finite set such that $\conv(A)$ has dimension $n$.
Let $I \subseteq [r]$ be the set of active constraints for the face $G_I$ of $\conv(A)$ as in \eqref{Eq:JFaces}.
For any $x \in \RR^r_{>0}$ we write $x_I \in \mathbb{R}^r_{\geq0}$ for the vector whose entries $(x_I)_i = 0$ if $i \in I$ and keep  $(x_I)_i = x_i$ if $i \in [r] \setminus I$.
For $f \in \RR[t_1^\pm , \dots, t_n^\pm]_A$ and $x\in \RR^r_{>0}$, we have
\begin{align*}
        f_{\cox}(x_I)=x^bf^{G_I}(\varphi_{F^\top}(x)).
    \end{align*}
\end{lemma}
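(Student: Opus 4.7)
The plan is a direct computation comparing both sides of the claimed identity monomial by monomial, using only the definition of the Cox homogenization and the halfspace description \eqref{Eq:FacetDescr}. Writing $f = \sum_{a \in A} c_a t^a$, by \eqref{Eq:CoxHom} we have $f_{\cox}(x) = \sum_{a \in A} c_a x^{F^\top a + b}$, so
\begin{align*}
    f_{\cox}(x_I) = \sum_{a \in A} c_a \prod_{i=1}^r (x_I)_i^{F_i \cdot a + b_i}.
\end{align*}
The first step is to identify which $a \in A$ contribute a nonzero term. Since $(x_I)_i = 0$ exactly when $i \in I$, and $F_i \cdot a + b_i \geq 0$ for every $i$ by \eqref{Eq:FacetDescr}, a term vanishes as soon as some $i \in I$ satisfies $F_i \cdot a + b_i > 0$. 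Using the convention $0^0 = 1$, the surviving $a \in A$ are exactly those with $F_i \cdot a + b_i = 0$ for every $i \in I$, that is, the lattice points of $A$ lying on the closed face $\overline{G_I}$ of $\conv(A)$ whose relative interior is $G_I$.

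Second, for each such surviving $a$, the factors with $i \in I$ become $0^0 = 1$, while the factors with $i \notin I$ are genuine powers of $x_i$. Since every vanishing index $i \in I$ carries exponent zero, we can reintroduce the factors $x_i^0 = 1$ freely, obtaining $(x_I)^{F^\top a + b} = x^{F^\top a + b}$. Combining this with the first step gives
\begin{align*}
    f_{\cox}(x_I) = \sum_{a \in A \cap \overline{G_I}} c_a\, x^{F^\top a + b}.
\end{align*}

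Finally, I expand the right-hand side. Interpreting $f^{G_I}$ as the initial form with respect to any inner normal vector of the face with relative interior $G_I$, equivalently the sum of the terms of $f$ supported on the closed face $\overline{G_I}$, one has $f^{G_I}(t) = \sum_{a \in A \cap \overline{G_I}} c_a t^a$. Substituting $t = \varphi_{F^\top}(x)$ sends $t^a$ to $x^{F^\top a}$, and multiplying by $x^b$ yields precisely $\sum_{a \in A \cap \overline{G_I}} c_a x^{F^\top a + b}$, matching the expression obtained for $f_{\cox}(x_I)$. The argument is essentially bookkeeping; the only subtle point is consistently handling the convention $0^0 = 1$ and recognizing that the $i$-indices in $I$ drop out because $F_i \cdot a + b_i = 0$ on the entire closed face $\overline{G_I}$, so that both sides range over exactly the same lattice points of $A$.
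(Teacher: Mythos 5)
Your proof is correct and takes essentially the same route as the paper's: a monomial-by-monomial computation showing that the terms of $f_\cox$ surviving the substitution $x \mapsto x_I$ are exactly those indexed by $a \in A$ with $F_i \cdot a + b_i = 0$ for all $i \in I$, which match the expansion $x^b f^{G_I}(\varphi_{F^\top}(x)) = \sum_a c_a x^{F^\top a + b}$. Your explicit reading of $f^{G_I}$ as the truncation to the \emph{closed} face (rather than to the relative interior literally displayed in \eqref{Eq:JFaces}) is the intended interpretation and is exactly what makes the identity, and the paper's own argument, go through.
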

\begin{proof}
A simple computation shows that
\begin{align}
\label{Eq:ProofCoxFaces}
        x^b f^{G_I}(\varphi_{F^\top}(x))&=\sum_{a\in G_I \cap A}c_a x^{F^\top a+b}.
    \end{align}
    Using the description of $G_I$ in \eqref{Eq:JFaces}, it follows that the polynomial in \eqref{Eq:ProofCoxFaces} does not involve the variables $x_i$ for $i\in I$. For any $a \in A \setminus G_I$ the monomial $x^{F^\top a+b}$ (in $f_\cox$) involves at least one of the variables $x_i, i\in I$. Thus, the polynomial in \eqref{Eq:ProofCoxFaces} equals $f_{\cox}(x_I)$  as desired.
\end{proof}

To describe the zeros of $f_\cox$ in $\mathbb{R}^r_{\geq0}$ in terms of the combinatorics of $A$, we consider the \emph{irrelevant ideal} 
\begin{align*}
    B(A) \coloneqq\left\langle \, \prod_{i\notin \sigma}x_i \,\middle|\,\sigma \in \Sigma_A(n)\,\right\rangle     \subseteq \CC[x_1, \dots , x_r]
\end{align*}
where $i\notin \sigma$ means that the ray spanned by $F_i$ does not belong to the cone $\sigma$. 
The vanishing locus of the irrelevant ideal
\begin{align*}
    Z_A:=\{\,x\in \CC^r\,|\,\forall g\in B(A)\colon g(x)=0\,\}
\end{align*}
is usually a reducible variety. 
We can decompose it into linear irreducible components by considering minimal non-faces.
A subset $C \subseteq [r]$ is called a \emph{primitive collection of rays in $\Sigma_A$} (\cite[Definition 5.1.5]{CoxLittleSchenck}) if 
the indices in $C$ correspond to facet normals that do not form a cone in $\Sigma_A$, but every subset of them does.
We denote by $C_1,\ldots,C_k \subseteq [r]$ the primitive collections of rays in~$\Sigma_A$.
By \cite[Proposition 5.1.6]{CoxLittleSchenck}, $Z_A$ decomposes into irreducible components as
\begin{align}
\label{eq:decompositionIrrelevantLocus}
    Z_A=\bigcup_{j=1}^k\mathcal{V}(x_i\, \mid \, i\in C_j).
\end{align}
By construction, $f_\cox$ vanishes on $Z_A$. For the sake of completeness, we give a proof of this simple fact in the next lemma.
\begin{lemma}
\label{Lemma:VanishingOnIrrelevant}
For $f \in \RR[t_1^\pm , \dots, t_n^\pm]_A$, we have that $f_\cox(x)=0$ for all points in the vanishing locus of the irrelevant ideal $x \in Z_A$. 
\end{lemma}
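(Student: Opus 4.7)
The plan is to exploit the decomposition \eqref{eq:decompositionIrrelevantLocus}, so that it suffices to fix a single primitive collection $C\subseteq[r]$ and show that $f_\cox$ vanishes identically after the substitution $x_i=0$ for all $i\in C$. Because every exponent appearing in $f_\cox$ is nonnegative, this vanishing can be checked one monomial at a time, so the task reduces to a combinatorial statement about individual exponent vectors.

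For a fixed $a\in A$, the monomial $c_a\prod_{i=1}^r x_i^{F_i\cdot a+b_i}$ of $f_\cox$ survives the substitution if and only if $F_i\cdot a+b_i=0$ for every $i\in C$. By the facet description \eqref{Eq:FacetDescr}, this amounts to requiring $a\in \bigcap_{i\in C}H_i$, where $H_i$ denotes the facet of $\conv(A)$ with inner normal $F_i$. I would therefore reduce the entire lemma to proving that this intersection of facets is empty whenever $C$ is a primitive collection.

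The emptiness is shown by contradiction: suppose there exists $p\in\bigcap_{i\in C}H_i$ and let $F$ be the smallest face of $\conv(A)$ containing $p$. Since every facet containing $p$ must contain $F$, we have $F\subseteq H_i$ for all $i\in C$. By the standard description of the inner normal fan, the rays of the normal cone $\sigma_F\in\Sigma_A$ are exactly those $F_j$ for which $F\subseteq H_j$, so $\{F_i\mid i\in C\}\subseteq\sigma_F(1)$. This directly contradicts the defining property of a primitive collection, which forbids $C$ from being contained in the rays of any cone of $\Sigma_A$.

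The main obstacle, such as it is, will be stating the last combinatorial step cleanly enough to cover possibly non-simplicial fans; but the identification of $\sigma_F(1)$ with $\{F_j\mid F\subseteq H_j\}$ is standard for inner normal fans of polytopes, so once this is cited the contradiction is immediate and the lemma follows. A second, essentially equivalent route would bypass the decomposition \eqref{eq:decompositionIrrelevantLocus} and instead show the stronger algebraic statement $f_\cox\in B(A)$ by picking, for each $a\in A$, a maximal cone $\sigma\in\Sigma_A(n)$ containing the normal cone of the smallest face of $\conv(A)$ through $a$, and observing that $x^{F^\top a+b}$ is then divisible by $\prod_{i\notin\sigma}x_i$.
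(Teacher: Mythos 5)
Your proposal is correct, and your main argument takes a genuinely different route from the paper. The paper proves the stronger algebraic statement $f_\cox\in B(A)$ monomial by monomial: for each $a\in A$ it takes the smallest face $G\ni a$, a vertex $v\in G$ with maximal cone $\sigma$, and argues that for $i\notin\sigma$ the facet $H_i$ contains neither $v$ nor $a$ (else $H_i\cap G$ would be a smaller face through $a$), so $F_i\cdot a+b_i>0$ and $x^{F^\top a+b}$ is divisible by $\prod_{i\notin\sigma}x_i$ --- this is exactly the ``second route'' you sketch at the end. Your main route instead works set-theoretically through the decomposition \eqref{eq:decompositionIrrelevantLocus}: for a primitive collection $C$ you show that $\bigcap_{i\in C}H_i=\emptyset$, since a point $p$ in the intersection would force $\{F_i\mid i\in C\}\subseteq\sigma_F(1)$ for the normal cone $\sigma_F$ of the smallest face containing $p$, contradicting the defining property of a primitive collection (in the sense of \cite[Definition 5.1.5]{CoxLittleSchenck}, not being contained in the ray set of any cone, which also handles non-simplicial cones as you note); since all exponents of $f_\cox$ are nonnegative, no monomial survives setting $x_i=0$ for $i\in C$, so $f_\cox$ vanishes on each component of $Z_A$. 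Both arguments are sound; the paper's buys the ideal membership $f_\cox\in B(A)$ and is self-contained, while yours leans on the primitive-collection decomposition \eqref{eq:decompositionIrrelevantLocus} (already quoted in the paper, so legitimate) and isolates a clean combinatorial fact --- facets indexed by a primitive collection have empty common intersection --- at the cost of only establishing vanishing on $Z_A$ rather than membership in $B(A)$.
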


\begin{proof}
     For any $a\in A$, let $G$ be the smallest face of $\conv(A)$ that contains $a$ and let $v$ be a vertex of $G$ with corresponding maximal cone $\sigma \in \Sigma(n)$.
     For any $i \in [r]$ such that $i\notin \sigma$, the corresponding facet does not contain $v$.
     This facet can also not contain $a$ since otherwise intersecting it with $G$ would give a smaller face containing $a$. Hence $F_i \cdot a+b_{i}>0$. This means that the monomial $x^{F^\top a+b}$ is divisible by $\prod_{i\notin \sigma}x_i$. We therefore get $f_\cox\in B(A)$ and hence $f_\cox(x)=0$ for all $x \in Z_A$. 
\end{proof}

We now characterizes strictly $A$-copositive polynomials in terms of their Cox homogenization.

\begin{proposition}
\label{prop:interiorCAcox}
Let $A \subseteq \mathbb{Z}^n$ be a finite set such that $\conv(A)$ has dimension $n$. 
  A polynomial $f \in \RR[t_1^\pm , \dots, t_n^\pm]_A$ is strictly $A$-copositive if and only if 
  both of the following two conditions hold: \begin{enumerate}
        \item[(i)] $f_\cox$ is copositive and
        \item[(ii)] $\mathcal{V}(f_\cox) \cap \RR^r_{\ge 0} = Z_A \cap \RR^r_{\ge 0}$.
    \end{enumerate}
\end{proposition}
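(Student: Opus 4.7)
The plan is to verify both implications by using Proposition~\ref{Prop:InteriorOfCA_bis}, which recasts strict $A$-copositivity as positivity of all face-truncations of $f$ on $\RR^n_{>0}$, and by translating between face-truncations of $f$ and values of $f_\cox$ on coordinate-boundary strata via Lemma~\ref{lem:coxtruncation}. The combinatorial bridge is provided by the decomposition \eqref{eq:decompositionIrrelevantLocus} of the irrelevant locus $Z_A$ into the vanishing loci of primitive collections.

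For the forward direction, statement (i) is immediate: strict $A$-copositivity entails ordinary $A$-copositivity, which by Lemma~\ref{Lemma:CoxSparseCopisitive} is equivalent to $f_\cox$ being copositive. The inclusion $Z_A \cap \RR^r_{\geq 0} \subseteq \mathcal{V}(f_\cox) \cap \RR^r_{\geq 0}$ in (ii) is exactly Lemma~\ref{Lemma:VanishingOnIrrelevant}. For the reverse inclusion, I would take $x \in \RR^r_{\geq 0}$ with $f_\cox(x) = 0$, set $I = \{i : x_i = 0\}$, and assume for contradiction that $x \notin Z_A$. From \eqref{eq:decompositionIrrelevantLocus}, this means $I$ meets the complement of every primitive collection, so $\{F_i : i \in I\}$ spans a genuine cone of $\Sigma_A$ and $I$ is the active-constraint set of a nonempty face $G_I$ of $\conv(A)$. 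Lifting $x$ to any $y \in \RR^r_{>0}$ with $y_i = x_i$ for $i \notin I$, Lemma~\ref{lem:coxtruncation} gives $0 = f_\cox(x) = f_\cox(y_I) = y^b f^{G_I}(\varphi_{F^\top}(y))$. Since $y^b > 0$ and $\varphi_{F^\top}(y) \in \RR^n_{>0}$, this forces $f^{G_I}$ to vanish at a positive point, contradicting Proposition~\ref{Prop:InteriorOfCA_bis}.

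For the backward direction, I would use Proposition~\ref{Prop:InteriorOfCA_bis}(iv) and verify that $f^G(t) > 0$ for every nonempty face $G \subseteq \conv(A)$ and every $t \in \RR^n_{>0}$. Fix such $G$, with active-constraint set $I$, and fix $t$. By surjectivity of $\varphi_{F^\top} \colon \RR^r_{>0} \twoheadrightarrow \RR^n_{>0}$ (established in the proof of Lemma~\ref{Lemma:CoxSparseCopisitive}), pick $x \in \RR^r_{>0}$ with $\varphi_{F^\top}(x) = t$. The truncated vector $x_I$ has zero coordinates precisely on $I$, and since $I$ is contained in a maximal cone of $\Sigma_A$, one checks directly from the description of the generators of $B(A)$ that $x_I \notin Z_A$. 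Hypothesis (ii) then yields $f_\cox(x_I) \neq 0$, while (i) yields $f_\cox(x_I) \geq 0$, so $f_\cox(x_I) > 0$. Applying Lemma~\ref{lem:coxtruncation} once more rewrites this as $x^b f^G(t) > 0$, and $x^b > 0$ gives the desired $f^G(t) > 0$.

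The main obstacle I anticipate is the combinatorial bookkeeping linking a vanishing pattern $I = \{i : x_i = 0\}$ of a point in $\RR^r_{\geq 0}$ either to membership in $Z_A$ (via primitive collections) or to a well-defined nonempty face of $\conv(A)$ (via active-constraint sets): one must make sure that the cone spanned by $\{F_i : i \in I\}$ really is a cone of $\Sigma_A$ precisely when $I$ avoids every primitive collection. Once this correspondence is firmly in place, Lemma~\ref{lem:coxtruncation} converts both directions into face-by-face statements about truncations of $f$, and Proposition~\ref{Prop:InteriorOfCA_bis} closes the argument.
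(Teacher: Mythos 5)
Your argument is correct and matches the paper's proof in all essentials: condition (i) via Lemma~\ref{Lemma:CoxSparseCopisitive}, one inclusion of (ii) via Lemma~\ref{Lemma:VanishingOnIrrelevant}, and the two remaining implications by translating face truncations of $f$ into values of $f_\cox$ at the points $x_I$ through Lemma~\ref{lem:coxtruncation}, Proposition~\ref{Prop:InteriorOfCA_bis}, and the primitive-collection decomposition \eqref{eq:decompositionIrrelevantLocus} of $Z_A$, exactly as the paper does (your contradiction argument for the reverse inclusion of (ii) is just the contrapositive of the paper's direct one). The only blemish is a slip of wording: from $x \notin Z_A$ you should conclude that the \emph{complement} of $I$ meets every primitive collection, i.e.\ that $I$ contains no primitive collection (not that $I$ meets every complement); the conclusion you then draw --- that $\{F_i : i \in I\}$ spans a cone of $\Sigma_A$ dual to a nonempty face with active set $I$ --- is the correct consequence of the correctly stated premise.
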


\begin{proof}
By Lemma~\ref{Lemma:CoxSparseCopisitive}, we have that $f$ is $A$-copositive if and only if condition (i) holds. 
Thus, it is enough to prove that an $A$-copositive polynomial $f$ is strictly $A$-copositive if and only if condition (ii) holds. By Proposition \ref{Prop:InteriorOfCA_bis}, we have that $f$ is strictly $A$-copositive if and only if 
\begin{align}
\label{eq:recallTruncationsforCox}
f^{G}(t)>0\text{ for all nonempty faces }G \subseteq \conv(A) \text{ and all }t\in\RR_{>0}^n.
    \end{align}

In the remainder of the proof, we assume that $f$ is $A$-copositive.
To prove that (ii) implies strict $A$-copositivity, let $t \in \RR^n_{>0}$ and let $I \subseteq [r]$ be the set of active constraints for a nonempty face $G_I$ of $\conv(A)$  (cf.~\eqref{Eq:JFaces}).
Since $\varphi_{F^\top}$ maps $\RR_{>0}^r$ surjectively onto $\RR_{>0}^n$ (see the proof of Lemma~\ref{Lemma:CoxSparseCopisitive}), there exists $x\in \RR_{>0}^r$ such that $\varphi_{F^\top}(x)=t$.
By Lemma~\ref{lem:coxtruncation}, we have
\begin{align}
\label{eq:ProofProp4_3}
          f^{G_I}(t)=f^{G_I}(\varphi_{F^\top}(x))=x^{-b}f_{\cox}(x_I),
        \end{align}
        where $x_I$ is given by
        $(x_I)_i = 0$ if $i\in I$  and $(x_I)_i = x_i$ if $i \in [r] \setminus I$.
If $x_I \notin Z_A$, then \eqref{eq:ProofProp4_3}, together with conditions (i) and (ii), implies that \eqref{eq:recallTruncationsforCox} holds, which in turn implies that $f$ is strictly $A$-copositive.

To show that $x_I\notin Z_A$, it suffices to find one polynomial in the irrelevant ideal $B(A)$ which does not vanish at $x_I$. Pick any vertex $v\in G_I$ (here we use that $G_I$ is not the empty face) and let $\sigma \in \Sigma_A(n)$ be the corresponding maximal cone.
By construction, the rays $F_i$ are contained in the cone $\sigma$ for all $i \in I$.
Since $(x_I)_i = x_i > 0$ for $i  \notin \sigma$ by the choice of $x$, we have
\begin{align*}
        \prod_{i\notin \sigma}(x_I)_i=\prod_{i\notin \sigma}x_i>0.
    \end{align*}
    In particular, $x_I\notin Z_A$. This concludes the proof that (ii) implies strict $A$-copositivity.
    
     To prove the other implication, assume that $f$ is strictly $A$-copositive or equivalently that \eqref{eq:recallTruncationsforCox} holds.
     By Lemma~\ref{Lemma:VanishingOnIrrelevant}, we have that $f_\cox(x) = 0$ for all $x \in Z_A$.
     Thus, we only have to show that $f_\cox(z)=0$ for some $z\in \RR_{\geq 0}^r$ implies $z\in Z_A$. 

Let $z \in \RR_{\geq 0}^r$ such that $f_\cox(z) = 0$. Using the notation $I = \{ i \in [r] \mid z_i = 0\} \subseteq [r]$, we associate to $z$ a point $x \in \RR^r_{>0}$, which is defined as $x_i = z_i$ for $i \in [r] \setminus I$ and $x_i = 1$ for $i \in I$. With the notation used in the first part of the proof, we have $z = x_I$.

If the facet normal vectors $\{F_i \mid i \in I\}$ formed a cone in $\Sigma_A$, then this cone would be dual to a face $G_I \subseteq\conv(A)$, and Lemma \ref{lem:coxtruncation} would imply that \begin{align*}
        0=f_\cox(x_I) =x^b f_{|G_I}(\varphi_{F^\top}(x)),
    \end{align*}
    which would contradict \eqref{eq:recallTruncationsforCox}.
    Thus, the normal vectors $\{F_i \mid i \in I\}$ don't span a cone in $\Sigma_A$ and therefore $I$ must contain some primitive collection $C_j \subseteq [r]$. Using \eqref{eq:decompositionIrrelevantLocus}, we have that $x_I \in \mathcal{V}(x_i \mid i\in C_j)\subseteq Z_A$ finishing the proof. 
\end{proof}

\subsection{Preprimes in the Cox ring}

To obtain a Pólya-type certificate for the Cox homogenization (Theorem~\ref{thm:polyaInCox}), we follow the approach of Section~\ref{sec:SparsePolya} and apply the Representation Theorem (Theorem~\ref{Thm:RepThm}). As a first step, we show that a certain preprime is Archimedean. 

\begin{proposition}
\label{Prop:CoxPolyaRepresentation}
Let $A\subseteq \mathbb{Z}^n$ be a finite set such that $\conv(A)$ is full dimensional, and denote $C_1,\dots,C_k \subseteq [r]$ the primitive collections of rays in $\Sigma_A$. If $C_1\cup \cdots \cup C_k=[r]$ then the preprime \begin{align*}
    T\,=\,\RR_{\geq0}[x_1,\dots,x_r]+\left\langle \, 1-\sum_{i\in C_j}x_i \ \middle| \ j\in [k]\,\right\rangle \subseteq \RR[x_1,\dots,x_r]
\end{align*}
is Archimedean. If additionally $f \in \RR[t_1^\pm, \dots, t_n^\pm]_A$ is strictly $A$-copositive, then there exist $g \in \mathbb{R}_{\geq 0}[x_1,\dots,x_r]$ and $g_1,\ldots,g_k\in \RR[x_1,\dots,x_r]$ such that
    \begin{align}
    \label{Eq:CoxPolyaRepresentation}
    f_\cox= g+\sum_{j=1}^k g_j\left(1-\sum_{i\in C_j}x_i\right).
\end{align}
\end{proposition}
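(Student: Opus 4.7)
The plan is to handle the two assertions in order: first establishing Archimedeanity of $T$ via Lemma~\ref{Lemma:Archimedean}, and then invoking the Representation Theorem (Theorem~\ref{Thm:RepThm}) together with the characterization of strict $A$-copositivity from Proposition~\ref{prop:interiorCAcox}.

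For Archimedeanity, my goal is to show $H_T = \RR[x_1,\dots,x_r]$. Since $H_T$ is a subring by Lemma~\ref{Lemma:Archimedean}, it suffices to verify that $\RR \subseteq H_T$ and that each variable $x_i$ lies in $H_T$. The first inclusion is immediate: for any $s\in\RR$, choosing $N\in\NN$ with $N\geq|s|$ gives $N\pm s\in \RR_{\geq 0}\subseteq T$. For the variables, the containment $1+x_i\in T$ is trivial since $x_i\in\RR_{\geq0}[x_1,\dots,x_r]\subseteq T$. The key step is showing $1-x_i\in T$: using the hypothesis $C_1\cup\cdots\cup C_k=[r]$, pick any $j$ with $i\in C_j$ and write
\begin{align*}
1-x_i \;=\; \sum_{\ell\in C_j\setminus\{i\}} x_\ell \;+\; \Bigl(1-\sum_{\ell\in C_j}x_\ell\Bigr),
\end{align*}
where the first summand is in $\RR_{\geq 0}[x_1,\dots,x_r]$ and the second is one of the generators of the ideal; hence $1-x_i\in T$.

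For the representation, I would apply Theorem~\ref{Thm:RepThm} to $g=f_\cox$ with this preprime. Unpacking the definitions, the semialgebraic set $\mathcal{K}(T)$ consists of those $x\in\RR^r_{\geq 0}$ satisfying $\sum_{i\in C_j} x_i=1$ for every $j\in[k]$. Since $f$ is strictly $A$-copositive, Proposition~\ref{prop:interiorCAcox} gives $f_\cox\geq 0$ on $\RR^r_{\geq 0}$ and identifies its zero locus there with $Z_A\cap \RR^r_{\geq 0}$. So to conclude that $f_\cox > 0$ on $\mathcal{K}(T)$, it is enough to verify $\mathcal{K}(T)\cap Z_A=\emptyset$. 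This is the heart of the argument and uses decomposition~\eqref{eq:decompositionIrrelevantLocus}: if $x\in Z_A$ then there exists a primitive collection $C_j$ with $x_i=0$ for all $i\in C_j$, contradicting $\sum_{i\in C_j} x_i=1$. Applying Theorem~\ref{Thm:RepThm} then yields $f_\cox\in T$, which is exactly the representation~\eqref{Eq:CoxPolyaRepresentation}.

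The main obstacle I anticipate is the verification that the hypothesis $C_1\cup\cdots\cup C_k=[r]$ is genuinely needed at both points: it ensures both that each variable $x_i$ is bounded on $\mathcal{K}(T)$ (so that the preprime is Archimedean), and that $\mathcal{K}(T)$ is disjoint from every component of the irrelevant locus (so that strict positivity of $f_\cox$ on $\mathcal{K}(T)$ follows from Proposition~\ref{prop:interiorCAcox}). Everything else is a direct and short assembly of already-established facts.
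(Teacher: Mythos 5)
Your proof is correct and follows essentially the same route as the paper: the same verification of Archimedeanity via Lemma~\ref{Lemma:Archimedean} and the covering hypothesis, and the same application of Proposition~\ref{prop:interiorCAcox} plus the decomposition~\eqref{eq:decompositionIrrelevantLocus} to see that $f_\cox>0$ on $\mathcal{K}(T)$ before invoking the Representation Theorem. One small remark: the hypothesis $C_1\cup\cdots\cup C_k=[r]$ is only needed for Archimedeanity, not for the disjointness $\mathcal{K}(T)\cap Z_A=\emptyset$, which already follows from the equations $\sum_{i\in C_j}x_i=1$ indexed by all primitive collections.
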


\begin{proof}
The proof of the first claim is similar to the proof of Lemma~\ref{Lemma:Archimedian}.
  By Lemma \ref{Lemma:Archimedean}, it suffices to show that $\RR\subseteq H_{T}$ and $x_1,\dots,x_r\in H_T$.
  The inclusion $\RR\subseteq H_{T}$ follows from the fact that $\RR$ is Archimedean.
  To show that $x_i\in H_{T}$, find $j \in [k]$ such that $i \in C_j$.
  This is guaranteed to exist since the primitive collections cover all the rays. We then have $1+x_i\in T$ and $1-x_i=\sum_{\ell\in C_j, \ell \neq i}x_\ell+1-\sum_{\ell\in C_j}x_\ell\in T$. 
  Hence $x_i \in H_{T}$, concluding the proof that $T$ is Archimedean. 
  
To prove the second part of the statement, we consider the set
\begin{align*}
    \mathcal{K}(T) =\{ \, z \in \RR^r \,|\, \forall g\in T\colon g(z)\geq 0\,\}.
\end{align*}
By construction, $\mathcal{K}(T) \subseteq \mathbb{R}^r_{\geq0}$.
Since the polynomials $\pm(1-\sum_{i\in C_j}x_i), \, j=1,\dots,k$ are in $T$, we have that $\sum_{i\in C_j}z_i=1$ for $z \in \mathcal{K}(T)$.
In particular, $z \notin \bigcup_{j=1}^k\mathcal{V}(x_i \,|\, i\in C_j)=Z_A$.
Using this observation, Proposition~\ref{prop:interiorCAcox} implies that $f_\cox(z)>0$ for all $z \in \mathcal{K}(T)$  if $f$ is strictly $A$-copositive.
We now apply the Representation Theorem \ref{Thm:RepThm} to obtain the desired decomposition of $f_\cox$.
\end{proof}

Proposition~\ref{Prop:CoxPolyaRepresentation} decomposes $f_\cox$ as the sum of a polynomial with nonnegative coefficients and a polynomial lying in the ideal generated by the polynomials associated with primitive collections of rays. We wish to eliminate the latter term via a suitable change of variables, as in the proof of Proposition~\ref{prop:polya}. There, a crucial step was using the homogeneity of	$f$ as a function on $Y_{\hat{A}}$. The multi-homogeneity of $f_\cox$ is determined by vectors in the kernel of $F$. For completeness, we recall the proof of this simple fact.

\begin{lemma}
\label{Lemma:fCoxHomog}
    Let $A \subseteq \ZZ^n$ be a finite set and let $F \in \mathbb{Z}^{n \times r}$ be the ray generator matrix of $\Sigma_A$. For every $v \in \ker(F), \lambda \in \mathbb{R}$ and $f\in \RR[t_1^\pm,\dots,t_n^\pm]_A$, we have
    \[ f_\cox(\lambda^{v_1} x_1, \dots, \lambda^{v_r} x_r) = \lambda^{v\cdot b}f_\cox(x), \]
    where $b \in \RR^r$ denotes the vector from \eqref{Eq:FacetDescr}.
\end{lemma}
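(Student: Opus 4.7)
The plan is to prove this by a direct monomial computation using the explicit formula for the Cox homogenization given in \eqref{Eq:CoxHom}, namely $f_\cox(x) = \sum_{a \in A} c_a x^{F^\top a + b}$. All the work is in tracking exponents carefully, and the key algebraic fact to use is that $Fv = 0$ precisely when $v \in \ker(F)$.

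First, I would substitute $x_i \mapsto \lambda^{v_i} x_i$ monomial-by-monomial. For a single monomial $x^{F^\top a + b}$, the substitution yields
\[
\prod_{i=1}^r (\lambda^{v_i} x_i)^{(F^\top a + b)_i} = \lambda^{\,v \cdot (F^\top a + b)}\, x^{F^\top a + b}.
\]
Next, I would rewrite the scalar exponent by bilinearity of the dot product as
\[
v \cdot (F^\top a + b) = v \cdot (F^\top a) + v \cdot b = (Fv) \cdot a + v \cdot b.
\]
The hypothesis $v \in \ker(F)$ now makes the first term vanish, leaving the exponent equal to $v \cdot b$, which is independent of $a$.

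Summing over $a \in A$ with coefficients $c_a$, the common factor $\lambda^{v \cdot b}$ pulls out and gives exactly $\lambda^{v \cdot b} f_\cox(x)$, as required. There is no real obstacle here: the only subtlety is the manipulation $v \cdot (F^\top a) = (Fv) \cdot a$, which is just the definition of the transpose. The statement therefore boils down to the observation that the shift vector $b$ contributes a nontrivial $\lambda$-weight precisely because $b$ need not lie in the row span of $F$, whereas the contribution of $F^\top a$ is killed by $v \in \ker(F)$ for every $a \in A$.
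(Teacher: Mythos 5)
Your proposal is correct and is essentially the same computation as in the paper: both substitute into each monomial $x^{F^\top a+b}$, collect the $\lambda$-exponent $v\cdot(F^\top a+b)$, and use $v\in\ker(F)$ (i.e.\ $v^\top F^\top=0$) to reduce it to $v\cdot b$ uniformly in $a$. No differences worth noting.
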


\begin{proof}
    Using that $(\lambda^{v_1}x_1)^{w_1}\cdots (\lambda^{v_r}x_r)^{w_r}= \lambda^{v^\top  w}x^w =  \lambda^{v \cdot w}x^w$ for any $w\in \mathbb{Z}^r$ and that $v^\top F^\top=0$, a direct computation shows that
    \[
    f_\cox(\lambda^{v_1} x_1, \dots, \lambda^{v_r} x_r) = \sum_{a\in A}c_a \lambda^{v^\top(F^\top a+b)} x^{F^\top a+b} = \lambda^{v\cdot b}f_\cox(x). \hfill\qedhere
    \]
\end{proof}

\subsection{Product of simplices}
\label{sec:ProdSimplex}
To eliminate the terms $g_j\left(1-\sum_{i\in C_j}x_i\right)$ in the certificate \eqref{Eq:CoxPolyaRepresentation}, we now assume that $\conv(A)$ is a product of simplices. Without this additional assumption a Pólya certificate with multiplier $\sum_{i\in C_j}x_i$ might not exist, see the examples below in Section~\ref{sec:Examples}. In the following lemma, we collect basic facts about products of simplices and their normal fans.

\begin{lemma}
\label{lem:simplicesKernel}
\label{Lemma:ProductSimplices}
For $j\in [k]$, let $\Delta_j \subseteq \RR^{n_j}$ be an $n_j$-dimensional simplex, and set $n = n_1 + \dots + n_k$
Let $\Sigma_{\Delta_j}$ be the inner normal fan of $\Delta_j$, and denote by $F^{(j)}$ the ray generator matrix of $\Sigma_{\Delta_j}$.
We enumerate the rays of $\Sigma_{\Delta_1}, \Sigma_{\Delta_2}, \dots, \Sigma_{\Delta_k}$ by $C_1 = \{\,1,\dots,n_1+1\,\}, \, C_2 = \{\, n_1 +2,\dots, n_1+n_2+2 \,\}, \dots ,C_k = \{\, n_1 + n_2 + \dots n_{k-1} + k-1,\dots, n + k \,\}$ respectively. 
We have the following
\begin{itemize}
    \item[(i)]
The ray generator matrix of the normal fan $\Delta = \Delta_1 \times \dots \times \Delta_k$ is given by
 \begin{align*}
F =\begin{pmatrix}
F^{(1)} & 0 & 0 \\
0 & \ddots & 0 \\
0 & 0 & F^{(k)} 
\end{pmatrix} \in \mathbb{Z}^{n  \times (n  + k)}.
\end{align*}
\item[(ii)] The sets $C_1,\ldots,C_k$ are precisely the primitive collections in the normal fan of $\Delta$.
\item[(iii)]
  $  \prod_{j=1}^k\sum_{i\in C_j}x_i=\sum_{\sigma \in \Sigma_\Delta(n)} \prod_{i \notin \sigma} x_i.$
\end{itemize}
\end{lemma}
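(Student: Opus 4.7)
The plan is to reduce all three statements to the standard fact that the inner normal fan of a product of polytopes is the product of the inner normal fans, combined with elementary combinatorics of the normal fan of a simplex. Concretely, I would first recall (citing \cite[Proposition 6.2.3]{CoxLittleSchenck}) that the cones of $\Sigma_\Delta$ are exactly the products $\sigma_1 \times \dots \times \sigma_k$ with $\sigma_j \in \Sigma_{\Delta_j}$, where each factor $\sigma_j$ is viewed inside $\RR^{n_j}$ and the product sits inside $\RR^n = \RR^{n_1} \oplus \dots \oplus \RR^{n_k}$. This immediately delivers part (i): every ray of $\Sigma_\Delta$ arises from a ray of exactly one $\Sigma_{\Delta_j}$ embedded in the corresponding block, so the primitive generators are the columns of $F^{(j)}$ padded with zeros in the other blocks, giving the block-diagonal form of $F$.

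For part (ii), I would first observe that for a single $n_j$-dimensional simplex $\Delta_j$, the collection $C_j$ is exactly the set of its $n_j+1$ facet normals, and the maximal cones of $\Sigma_{\Delta_j}$ are the $n_j+1$ subsets of the form $C_j \setminus \{i\}$ (each corresponding to a vertex of $\Delta_j$). Hence every proper subset of $C_j$ generates a cone in $\Sigma_{\Delta_j}$, while $C_j$ itself does not, so $C_j$ is the unique primitive collection in $\Sigma_{\Delta_j}$. Transferring this to the product via the product-of-fans description: a subset $S \subseteq [n+k]$ spans a cone of $\Sigma_\Delta$ iff each $S \cap C_j$ is a proper subset of $C_j$. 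From this characterization I get two things at once: each $C_j$ is primitive in $\Sigma_\Delta$; and if $S$ is any primitive collection then $S$ fails the above condition for some $j$, forcing $C_j \subseteq S$, whence minimality gives $S = C_j$.

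For part (iii), I would use the bijection between $\Sigma_\Delta(n)$ and $C_1 \times \dots \times C_k$ sending $(i_1, \dots, i_k)$ to the maximal cone $(C_1 \setminus \{i_1\}) \cup \dots \cup (C_k \setminus \{i_k\})$ (viewed as a set of rays). The complementary index set of this cone in $[n+k]$ is exactly $\{i_1, \dots, i_k\}$, so $\prod_{i \notin \sigma} x_i = x_{i_1} \cdots x_{i_k}$. Summing over all tuples and recognizing the distributive expansion gives
\begin{equation*}
\sum_{\sigma \in \Sigma_\Delta(n)} \prod_{i \notin \sigma} x_i \;=\; \sum_{(i_1, \dots, i_k) \in C_1 \times \dots \times C_k} x_{i_1} \cdots x_{i_k} \;=\; \prod_{j=1}^k \sum_{i \in C_j} x_i.
\end{equation*}

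None of the three steps is genuinely hard; the only real care needed is to set up the bijection and the block decomposition unambiguously so that the enumeration of rays fixed in the statement (the specific partition $C_1, \dots, C_k$ of $[n+k]$) matches the block structure of $F$ and the indexing of the maximal cones. If anything, the ``main obstacle'' is just bookkeeping: making sure that ``rays of $\Sigma_\Delta$ not in $\sigma$'' and ``indices not in the chosen subset of each $C_j$'' are consistently identified throughout the three parts.
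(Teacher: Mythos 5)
Your proposal is correct and follows essentially the same route as the paper: both use the product-of-fans description (the paper cites Ziegler's Lemma 7.7 where you cite Cox--Little--Schenck) to get the block-diagonal ray matrix, characterize cones of $\Sigma_\Delta$ by the condition that each intersection with a $C_j$ is proper, deduce that the $C_j$ are exactly the primitive collections via minimality, and prove (iii) by the bijection between maximal cones and choices of one omitted ray per factor. No gaps to report.
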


\begin{proof}
The normal fan of a simplex $\Sigma_{\Delta_j}$ has exactly $n_j + 1$ many rays and every proper subset of them form a cone in $\Sigma_{\Delta_j}$
By \cite[Lemma 7.7]{Ziegler}, the inner normal fan of $\Delta_1 \times \dots \times \Delta_k$  equals
    \begin{align}
    \label{eq:ProductFan}
        \{ \, \sigma_1 \times \dots \times \sigma_k \, \mid \, \sigma_1 \in \Sigma_{\Delta_1}, \dots, \sigma_k \in \Sigma_{\Delta_k} \,  \}.
    \end{align}
Thus, every ray in $\Sigma_\Delta$ has the form $\{0\} \times \dots \times \{0\} \times \sigma_j \times \{0\} \times \dots \times \{0\}$ for some $\sigma_j \in \Sigma_{\Delta_j}(1)$, $j \in [k]$. In particular, $F$ has the desired form.

The above observation also implies that $C_1,\dots,C_k$ are primitive collection of rays. Indeed, for each $j \in [k]$ the collection of rays $\{0\} \times \dots \times \{0\} \times \sigma_i \times \{0\} \times \dots \times \{0\}, \, i \in C_j, \sigma_i \in \Sigma_{\Delta_j}(1)$ does not form a cone in the normal fan $\Sigma_\Delta$ but any proper subset of it does. Thus, $C_j$ is a primitive collection of rays.
  
  On the other hand if $C\subseteq [n+k]$ is such that the corresponding rays do not form a cone in $\Sigma_\Delta$, then there exists an index $j \in [k]$ such that the set $C_j$ of those rays $\sigma_i$ that are in $\Sigma_{\Delta_j}(1)$ do not form a cone in $\Sigma_{\Delta_j}$. Then $C_j \subseteq C$ so that minimality of a primitive collection implies $C_j = C$. So $C_1,\ldots,C_k$ are in fact the only primitive collections.

  By~\eqref{eq:ProductFan}, every maximal cone of $\Sigma_\Delta$ is a product of maximal cones of $\Sigma_{\Delta_1}, \dots \Sigma_{\Delta_k}$. 
  Since these maximal cones are given by all but one rays in $\Sigma_{\Delta_j}$ the monomials on the right-hand side of (iii) correspond to all possible ways of choosing one element from each $C_1,\dots,C_k$. Since these are exactly the monomials on the left-hand side of (iii), the result follows.
\end{proof}

Next, we prove the existence of a Pólya multiplier for the Cox homogenization under the assumption that $\conv(A)$ is a product of \emph{dilated standard simplices}. These are simplices of the form $\Delta = d \cdot \conv(0,e_1,\dots,e_{n})\subseteq \RR^n$ for some $d \in \NN$.
In this case, the ray generator matrix of $\Sigma_{\Delta}$ is given by
\begin{align}
\label{Eq:RayGenDilated}
    F = \begin{pmatrix}
       \mathrm{Id}_{n\times n}    &  -\mathds{1}_{n}    \end{pmatrix} \in \ZZ^{n \times (n +1)},
\end{align}
where $\mathrm{Id}_{n\times n}$ denotes the identity matrix and $\mathds{1}_{n}$ is a vector of length $n$ with all entries equal to~$1$.
With this choice of ordering of the extreme rays, the facet opposite to $e_i$ is given by $F_i$ for $i=1,\dots,n$, while the facet opposite to $0$ has $F_{n+1} =  -\mathds{1}_{n}$ as inner normal vector. 
Moreover, the kernel of $F$ contains the vector $\mathds{1}_{n+1}$.

\begin{theorem}
\label{prop:polyaInCox_standard}
   Let $A \subseteq \ZZ^n$ be a finite set such that $\conv(A) = \Delta_1 \times \dots \times \Delta_k$ is a product of full-dimensional dilated standard simplices, and denote by $C_1,\dots,C_k \subseteq [r]$ the primitive collections of rays in $\Sigma_A$. For $f\in \RR[t_1^{\pm},\ldots,t_n^{\pm}]_A$, the following are equivalent.
\begin{enumerate}
    \item[(i)] $f$ is strictly $A$-copositive.
    \item[(ii)]  $\exists N_1,\ldots,N_k\in \NN\colon\,\prod_{j=1}^k\left(\sum_{i\in C_j}x_i\right)^{N_j }f_\cox \in \RR_{\geq0}[x_1,\dots,x_r]$ and $\Newt(f) = \conv(A)$.
    \item[(iii)] $\exists N\in \NN\colon \, \left(  \sum_{\sigma \in \Sigma_A(n)} \prod_{i \notin \sigma} x_i  \right)^N f_\cox  \in \RR_{\geq0}[x_1,\dots,x_r]$ and $\Newt(f) = \conv(A)$.
\end{enumerate}
\end{theorem}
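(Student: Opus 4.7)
The plan is to prove the cycle of implications (i) $\Rightarrow$ (ii) $\Rightarrow$ (iii) $\Rightarrow$ (i). The key preparatory observation is that by Lemma~\ref{Lemma:ProductSimplices}(ii), for a product of dilated standard simplices the primitive collections $C_1,\dots,C_k$ partition the index set $[r]$. In particular $C_1\cup\cdots\cup C_k=[r]$, so Proposition~\ref{Prop:CoxPolyaRepresentation} applies and provides the starting point for the main step.

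The main step is (i) $\Rightarrow$ (ii). Set $S_j:=\sum_{i\in C_j}x_i$. By Proposition~\ref{Prop:CoxPolyaRepresentation}, strict $A$-copositivity of $f$ gives a decomposition
\[
f_\cox \;=\; g \;+\; \sum_{j=1}^k g_j\bigl(1-S_j\bigr), \qquad g\in\RR_{\geq 0}[x_1,\dots,x_r].
\]
Mirroring Proposition~\ref{prop:polya}, I substitute $x_i\mapsto x_i/S_{j(i)}$, where $j(i)$ is the unique index with $i\in C_{j(i)}$ (well-defined since the $C_j$ partition $[r]$). This kills all ideal terms, as $\sum_{i\in C_j}(x_i/S_j)=1$. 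By Lemma~\ref{Lemma:fCoxHomog} together with the specific form~\eqref{Eq:RayGenDilated} of the ray generator matrix of each simplex, $\ker F$ is spanned by the indicator vectors $\mathds{1}_{C_1},\dots,\mathds{1}_{C_k}$, making $f_\cox$ multi-homogeneous under the block-scalings by each $S_j$. The substitution therefore reads $\prod_j S_j^{-d_j}\,f_\cox(x)=g(\dots,x_i/S_{j(i)},\dots)$ for some nonnegative integers $d_j$, and clearing denominators by multiplying both sides with a suitably large power of each $S_j$ yields $\prod_{j=1}^k S_j^{N_j}\,f_\cox\in\RR_{\geq 0}[x_1,\dots,x_r]$. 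The Newton polytope equality follows separately: every vertex $v$ of $\conv(A)$ lies automatically in $A$, and strict $A$-copositivity forces the truncation $f^{\{v\}}=c_v t^v$ to be positive on $\RR^n_{>0}$ by Proposition~\ref{Prop:InteriorOfCA_bis}, hence $c_v\neq 0$ and $v\in\supp(f)$.

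The implication (ii) $\Rightarrow$ (iii) is immediate: taking $N:=\max_j N_j$, I multiply the certificate by the nonnegative-coefficient polynomial $\prod_j S_j^{N-N_j}$ and invoke the identity $\prod_{j=1}^k S_j^{N}=\bigl(\sum_{\sigma\in\Sigma_A(n)}\prod_{i\notin\sigma}x_i\bigr)^N$ coming from Lemma~\ref{Lemma:ProductSimplices}(iii). For (iii) $\Rightarrow$ (i), I adapt the initial-form argument from (iii) $\Rightarrow$ (i) in the proof of Theorem~\ref{Thm:SparsePolya}. Setting $g=\bigl(\sum_\sigma\prod_{i\notin\sigma}x_i\bigr)^N$ and using~\eqref{Eq:InitialFormsFactor}, for every $w\in\RR^r$ both $\initial_w(g f_\cox)$ and $\initial_w(g)$ are nonzero polynomials with nonnegative coefficients, hence positive on $\RR^r_{>0}$, so $\initial_w(f_\cox)>0$ on $\RR^r_{>0}$ as well. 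For $w=e_I$ the characteristic vector of the set $I$ of facets active on a nonempty face $G_I$ of $\conv(A)$, a bookkeeping computation identifies $\initial_{e_I}(f_\cox)(x)=x^b f^{\overline{G_I}}(\varphi_{F^\top}(x))$; the hypothesis $\Newt(f)=\conv(A)$ ensures $\overline{G_I}\cap\supp(f)\neq\emptyset$ so this truncation is nonzero. Combined with the surjectivity $\varphi_{F^\top}\colon\RR^r_{>0}\twoheadrightarrow\RR^n_{>0}$ (used in the proof of Lemma~\ref{Lemma:CoxSparseCopisitive}), this gives $f^{\overline{G_I}}>0$ on $\RR^n_{>0}$ for every nonempty face, which by Proposition~\ref{Prop:InteriorOfCA_bis} is equivalent to strict $A$-copositivity.

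The main obstacle is the substitution-and-clearing-denominators step in (i) $\Rightarrow$ (ii): it generalises the simple rescaling by $\sum t^{a_i}$ used for classical Pólya to a more delicate multi-homogeneity argument in the Cox coordinates, and it is precisely at this step that the product-of-standard-simplices hypothesis enters, through the clean basis $\mathds{1}_{C_1},\dots,\mathds{1}_{C_k}$ of $\ker F$.
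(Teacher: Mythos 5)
Your proposal is correct, and its core — the implication (i) $\Rightarrow$ (ii) via Proposition~\ref{Prop:CoxPolyaRepresentation}, the block-wise multi-homogeneity of $f_\cox$ coming from $\mathds{1}_{C_1},\dots,\mathds{1}_{C_k}\in\ker(F)$ (block-diagonal $F$ with blocks of the form \eqref{Eq:RayGenDilated}), the substitution $x_i\mapsto x_i/S_{j(i)}$ and clearing of denominators, and the passage (ii) $\Rightarrow$ (iii) through the identity of Lemma~\ref{Lemma:ProductSimplices}(iii) — is exactly the paper's argument; if anything, you are more explicit than the paper about why the individual indicator vectors (not just the all-ones vector) lie in $\ker F$, which is the point where multi-homogeneity per primitive collection is really needed. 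You deviate in two places, both legitimately. For the Newton polytope condition in (i) $\Rightarrow$ (ii) the paper simply invokes Theorem~\ref{Thm:SparsePolya}, whereas you argue directly that every vertex of $\conv(A)$ lies in $A$ and its singleton-face truncation must be positive by Proposition~\ref{Prop:InteriorOfCA_bis}; this is shorter and self-contained. For (iii) $\Rightarrow$ (i) the paper routes through Proposition~\ref{prop:interiorCAcox}: it shows $f_\cox\geq 0$ on $\RR^r_{\geq 0}$ and that every nonnegative zero of $f_\cox$ lies in $Z_A$, by producing for each maximal cone $\sigma$ a monomial of $hf_\cox$ using only the variables $x_i$, $i\notin\sigma$ (here $\Newt(f)=\conv(A)$ supplies the vertex monomial of $f_\cox$). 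You instead transport the initial-form argument of Theorem~\ref{Thm:SparsePolya} into Cox coordinates: from \eqref{Eq:InitialFormsFactor} you get $\initial_w(f_\cox)>0$ on $\RR^r_{>0}$ for all $w$, and for $w=e_I$ (the active set of a closed face) you identify $\initial_{e_I}(f_\cox)$ with $x^b f^{\overline{G_I}}(\varphi_{F^\top}(x))$ — valid precisely because $\Newt(f)=\conv(A)$ guarantees $\supp(f)\cap\overline{G_I}\neq\emptyset$, so the initial form is the face truncation — and then conclude via surjectivity of $\varphi_{F^\top}$ on positive orthants and Proposition~\ref{Prop:InteriorOfCA_bis}. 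Your route buys a uniform reduction to the torus-side criterion without needing the irrelevant-ideal characterization of strict copositivity (Proposition~\ref{prop:interiorCAcox}), essentially re-proving a closed-face version of Lemma~\ref{lem:coxtruncation} along the way; the paper's route reuses machinery it has already built and makes explicit where the zeros of $f_\cox$ sit, which it needs anyway for the discussion beyond products of simplices. Both arguments are complete; note also that proving only the cycle (i) $\Rightarrow$ (ii) $\Rightarrow$ (iii) $\Rightarrow$ (i), rather than the separate equivalence of (ii) and (iii), suffices for the statement.
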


\begin{proof}
    First, we assume that $f$ is strictly $A$-copositive. By Theorem~\ref{Thm:SparsePolya}, we have $\Newt(f) = \conv(A)$. By Proposition~\ref{Prop:CoxPolyaRepresentation}, there exist $g \in \mathbb{R}_{\geq 0}[x_1,\dots,x_r]$ and $g_1,\ldots,g_k\in \RR[x_1,\dots,x_r]$ with
    \begin{align}
    \label{Eq:CoxPolyaRepresentation_proof}
    f_\cox= g+\sum_{j=1}^k g_j\left(1-\sum_{i\in C_j}x_i\right).
\end{align}
Let $F$ denote the ray generator matrix of $\Sigma_A$. Since $\conv(A)$ is a product of dilated standard simplices, Lemma~\ref{lem:simplicesKernel} implies that $\mathds{1}_{n+k} = (1,\dots,1) \in \ker F$. Thus, $f_\cox$ is multihomogeneous when partitioning the variables according to the primitive collections by Lemma~\ref{Lemma:fCoxHomog}. 
Substituting $x_i\mapsto x' \coloneq \frac{x_i}{\sum_{l\in C_j}x_l}$ where $j \in [k]$ is uniquely determined by $i$ via $i\in C_j$ (see Lemma \ref{lem:simplicesKernel}), we get
\begin{align}
\label{eq:proof_changeofcoords}
    \prod_{j=1}^k\left(\sum_{i\in C_j}x_i\right)^{w_i}f_\cox(x)=g(x')
\end{align}
for some $w_1,\dots,w_k \in \ZZ$.
Now we simply multiply both sides by large enough powers of the linear forms $\sum_{i\in C_j}x_i$ to clear denominators to find $N_1,\dots,N_k$ as in (ii).

The equivalence of (ii) and (iii) follows from Lemma~\ref{Lemma:ProductSimplices}. In the rest of the proof, we show that (iii) implies (i). Let $h = \left(\sum_{\sigma \in \Sigma_A(n)} \prod_{i \notin \sigma} x_i  \right)^N $ such that $hf_\cox \in \RR_{\geq0}[x_1,\dots,x_r]$. Since both $h$ and $hf_\cox$ have nonnegative coefficients, it follows that $f_\cox(x) \geq 0$ for all $x \in \RR^r_{\geq0}$. By Proposition~\ref{prop:interiorCAcox}, to prove strict $A$-copositivity of $f$, it suffices to show that $f_\cox(x) = 0$ if and only if $x \in Z_A$. By Lemma~\ref{Lemma:VanishingOnIrrelevant}, $f_\cox(x) = 0$ for all $x \in Z_A$. 

Let $x^* \in \mathbb{R}^r_{\geq0}$ such that $f_\cox(x^*) = 0$. Since $hf_\cox$ has positive coefficients and $h(x^*)f_\cox(x^*)=0$, every monomial of $hf_\cox$ vanishes at $x^*$. To prove that $x^* \in Z_A = \mathcal{V}( \prod_{i \notin \sigma} x_i \, \mid \, \sigma \in \Sigma_A(n) \,)$, we show that for every $\sigma \in \Sigma_A(n)$ there exists a monomial of $hf_\cox$ that only involves the variables $x_i, i \notin \sigma$. The polynomial $h$ certainly has such a monomial, we now prove that $f_\cox$ does so as well.
Let $a \in A$ be the vertex of $\conv(A)$ corresponding to the maximal cone $\sigma$. 
Since $\conv(A)$ is the Newton polytope of $f$, the monomial $t^a$ appears in $f$.
The corresponding term in $f_\cox$ is $x^{F^\top a+b}$.
Since $\sigma$ contains precisely those facets which pass through the vertex $a$, we have $(F^\top a+b)_i=0 \Leftrightarrow i \in \sigma$ and hence $x^{F^\top a+b}$ contains only variables $x_i$ with $i\notin \sigma$.

Define $w\in \RR_{\geq 0}^n$ by \begin{align*}
    w_i=\begin{cases}
        1 & \text{ if } i \notin \sigma \\ 0 & \text{ else }
    \end{cases}
\end{align*}
then we have $\initial_w(hf_\cox)=\initial_w(h)\initial_w(f_\cox)\neq 0$. Since both $h$ and $f_\cox$ are homogeneous and contain a monomial only in the variables $x_i$ for $i\notin \sigma$, the initial forms of $h$ and of $f_\cox$ only use these variables. It follows that $\initial_w(hf_\cox)$ only uses the variables $x_i$ for $i\notin \sigma$, in particular $hf_\cox$ contains such a monomial.
\end{proof}
\begin{remark}
    By a refined analysis as in the proof of Theorem \ref{Thm:SparsePolya}, and by possibly increasing the exponents $N_1,\ldots, N_k$ and $N$ in Theorem~\ref{prop:polyaInCox_standard}, we can additionally always ensure that \begin{align*}
        \supp\left(\prod_{j=1}^k\left(\sum_{i\in C_j}x_i\right)^{N_j }f_\cox\right)&=\sum_{j=1}^kN_j\cdot \supp\left(\sum_{i\in C_j}x_i\right)+\supp(f_\cox) \\
        \supp\left(\left(  \sum_{\sigma \in \Sigma_A(n)} \prod_{i \notin \sigma} x_i  \right)^N f_\cox \right)&=N\cdot \supp\left( \sum_{\sigma \in \Sigma_A(n)} \prod_{i \notin \sigma} x_i\right)+\supp(f_\cox).
    \end{align*} 
\end{remark}

In the remainder of this section, we extend Theorem~\ref{prop:polyaInCox_standard} to arbitrary simplices. The key step is to reduce to the case of standard simplices via an affine transformation of the exponent vectors.
To fix the notation, we give an explicit construction of such an affine transformation.
For an $n$-dimensional simplex $\Delta \subseteq \mathbb{R}^n$ with vertices $a_1,\dots,a_{n+1} \in \ZZ^n$, we consider the matrix
\[M = \begin{pmatrix}
    a_1 - a_{n+1} & \dots &a_n - a_{n+1}
\end{pmatrix} \in \ZZ^{n \times n}.\]
Since $\Delta$ has dimension $n$, the matrix $M$ is invertible over $\mathbb{Q}$. 
The affine transformation 
\begin{align}
\label{Eq:affineCoordchange}
   \psi\colon \RR^n \to \RR^n, \quad  p \mapsto \det(M)M^{-1}\left(p - a_{n+1} \right)
\end{align}
is defined over $\mathbb{Z}$ and maps the vertices $a_1,\dots,a_n,a_{n+1}$ to the vectors $\det(M)e_1,\dots,\det(M)e_n,0$. In particular, the image of $\Delta$ under \eqref{Eq:affineCoordchange} is a dilated standard simplex.
The multiplication by $\det(M)$ ensures that the affine transformation is defined over $\mathbb{Z}$ and therefore maps all lattice points in $\Delta$ to lattice points. Let $F$ denote the ray generator matrix of the inner normal fan $\Sigma_\Delta$ of $\Delta$. We choose the ordering of the rays such that the $i$-th column of $F$ is an inner normal vector of the facet opposite to the vertex $a_i$ for $i=1,\dots,n+1$.
Let $b \in \ZZ^r$ such that $\Delta = \{ \, p \in \RR^n \, | \, F^\top p + b \geq 0 \, \}$.
   A simple computation shows that 
    \[  \psi(\Delta)  =   \{ \, q \in \RR^n \, | \, F^\top \psi^{-1}(q) + b \geq 0 \, \} = \{ \, q \in \RR^n \, | \, \tfrac{1}{\det(M)} F^\top M q + (b + F^\top a_{n+1}) \geq 0 \, \}.\]
The columns of the matrix $\tfrac{1}{\det(M)} M^\top F$ generate the rays in the normal fan of $\psi(\Delta)$.
However, these columns might not have integer entries. To get the primitive ray generators of $\Sigma_{\psi(\Delta)}$, we multiply each column by $v_1,\dots,v_{n+1} \in \RR$. These are uniquely given by
\begin{align}
\label{eq:Defv_Ftilde}
   \tfrac{1}{\det(M)} M^\top F \diag(v_1,\dots,v_{n+1}) = \begin{pmatrix}
        \mathrm{Id}_{n\times n} & -\mathds{1}_n    \end{pmatrix}.
\end{align}
or equivalently
\begin{align}
\label{eq:Defv}
   v_i = \tfrac{\det(M)}{F_i \cdot a_i + b_i}, \qquad \text{for } i =1,\dots,n+1.
\end{align}
Note that the matrix in the left-hand side of \eqref{eq:Defv_Ftilde} is the ray generator matrix of the dilated simplex $\psi(\Delta)$, cf. \eqref{Eq:RayGenDilated}.

\begin{lemma}
\label{lemma:v_inKernel}
    For $v = (v_1,\dots,v_{n+1}) \in \RR^{n+1}$ as defined in \eqref{eq:Defv}, we have that $v \in \ker (F) \cap {\ZZ^{n+1}}$ and all $v_1,\dots,v_{n+1}$ have the same sign.
\end{lemma}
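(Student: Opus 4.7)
My plan is to establish the three claims separately, with integrality being the main obstacle. The starting point, obtained by unwinding \eqref{eq:Defv_Ftilde}, is the explicit formula $v_i = \det(M)/\ell_i$ with $\ell_i := F_i \cdot a_i + b_i$, which is already recorded in \eqref{eq:Defv}; I would use this throughout.

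For the kernel claim $v \in \ker F$, I would multiply both sides of \eqref{eq:Defv_Ftilde} on the right by $\mathds{1}_{n+1}$. Since $\diag(v_1,\dots,v_{n+1})\,\mathds{1}_{n+1} = v$, the left-hand side becomes $\frac{1}{\det M}\,M^\top F v$, while the right-hand side collapses to $\sum_{j=1}^{n} e_j - \mathds{1}_n = 0$. Invertibility of $M^\top$ then forces $F v = 0$. The common-sign claim is equally short: because $F_i$ is the inner normal of the facet of the $n$-dimensional simplex $\Delta$ opposite to $a_i$, the vertex $a_i$ does not lie on that facet, so $\ell_i > 0$; hence every $v_i$ shares the sign of $\det M$.

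The main obstacle is the integrality $v_i \in \ZZ$, equivalently $\ell_i \mid \det M$. My approach is to exploit the primitivity of $F_i \in \ZZ^n$ and apply a unimodular change of coordinates. Concretely, choose $U \in \mathrm{GL}_n(\ZZ)$ whose last row equals $F_i^\top$, which is possible exactly because $F_i$ is primitive, and work with $\tilde a_j := U a_j$. By construction, the last coordinate of each $\tilde a_j$ equals $F_i \cdot a_j$: for every $j \neq i$ the vertex $a_j$ lies on the facet opposite $a_i$, so $F_i \cdot a_j = -b_i$, whereas $F_i \cdot a_i = \ell_i - b_i$. Since $|\det M|$ equals the absolute value of the determinant of the integer matrix with columns $\tilde a_j - \tilde a_{n+1}$, $j = 1,\dots,n$, its last row is either the constant row $(-\ell_i,\dots,-\ell_i)$ (when $i = n+1$) or the row with single nonzero entry $\ell_i$ in column $i$ (when $i \leq n$). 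In both cases, factoring $\ell_i$ out of that row and cofactor-expanding along it yields $\det M = \pm\, \ell_i\, d$ for some $d \in \ZZ$, which gives $v_i = \det(M)/\ell_i \in \ZZ$. The only mildly delicate point in the argument is tracking these two sub-cases separately, and I would treat the $i = n+1$ case first since it is cleaner.
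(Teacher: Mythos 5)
Your kernel and sign arguments coincide with the paper's: the paper likewise multiplies \eqref{eq:Defv_Ftilde} by $\mathds{1}_{n+1}$, uses invertibility of $\tfrac{1}{\det(M)}M^\top$, and notes the positivity of the denominators $\ell_i = F_i\cdot a_i + b_i$ (in fact you are slightly more careful, since the strict inequality $\ell_i>0$, which you justify by $a_i$ not lying on its opposite facet, is what makes $v_i$ well defined at all). Where you genuinely diverge is the integrality of $v_i$: the paper disposes of it in one line by citing Bruns' result that $\det(M)/\ell_i$ equals the lattice volume of the facet opposite $a_i$, whereas you prove the divisibility $\ell_i \mid \det(M)$ directly, by extending the primitive vector $F_i$ to a unimodular matrix $U$, observing that the last row of $U M$ is either the constant row $(-\ell_i,\dots,-\ell_i)$ (case $i=n+1$) or $\ell_i e_i^\top$ (case $i\le n$), and cofactor-expanding. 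I checked the row computations in both cases and they are correct (for $i\le n$ you correctly use that $a_{n+1}$ also lies on the facet opposite $a_i$), and the extension of a primitive integer vector to a row of a $\mathrm{GL}_n(\ZZ)$ matrix is a standard fact. Your route is self-contained and elementary — in effect it reproves the divisibility underlying the cited volume formula, with the integer cofactor playing the role of the facet's normalized volume — at the cost of a short case analysis; the paper's route is shorter and additionally identifies $v_i$ geometrically as that lattice volume, but relies on an external reference.
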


\begin{proof}
We use the notation from above.
Since by definition $F_i \cdot a_i + b_i \geq 0$, the values of $v_i$'s have the same sign.
By \cite[Proposition 1]{Bruns} $\tfrac{\det(M)}{F_i \cdot a_i + b_i}$ equals the lattice volume of the facet opposite to $a_i$, which implies that $v \in \ZZ^{n+1}$. 
Since the all-one vector $\mathds{1}_{n+1}$ lies in the kernel of the matrix on the right-hand side of \eqref{eq:Defv_Ftilde}, we have 
    \[ 0 =  \tfrac{1}{\det(M)} M^\top F \diag(v_1,\dots,v_{n+1}) \mathds{1} =  \tfrac{1}{\det(M)} M^\top F  v. \]
Since $\tfrac{1}{\det(M)} M^\top$ is invertible, it follows that $v \in \ker(F)$.
\end{proof}

If $\Delta = \Delta_{n_1} \times \dots \times \Delta_{n_k} \subseteq \RR^n$, we apply the above construction componentwise and obtain an affine linear map $\Phi\colon \RR^n \to \RR^n, \;  p \mapsto L p  + w$, where 
\begin{align}
\label{eq:Landw}
        L = \begin{pmatrix}
\det(M_1) M_1^{-1} & 0 & 0 \\
0 & \ddots & 0 \\
0 & 0 & \det(M_k) M_k^{-1}  
\end{pmatrix}, \qquad w = \begin{pmatrix}
-\det(M_1) M_1^{-1} a_{n_1+1}^{(1)}  \\
 \ddots \\
 -\det(M_k) M_k^{-1} a_{n_k+1}^{(k)}  
\end{pmatrix}
\end{align}
and $a^{(j)}_{n_j+1}$ denotes an arbitrary but fixed vertex of $\Delta_{n_j}$ for each $j \in [k]$.
Using Lemma~\ref{lem:simplicesKernel}(i) and \eqref{eq:Defv}, we have that the ray generator matrix of $L \Delta + w$ is given by 
\begin{align}
    \label{eq:ChoiceOfv}
    \widetilde{F} = (L^{-1})^\top F \diag(v)
\end{align}
for a unique choice of $v \in \ZZ^n$, where $F$ denotes the ray generator matrix of $\Sigma_\Delta$. Since $\Phi$ maps lattice points to lattice points, for any $f = \sum_{a\in A}c_a x^a \in \RR[t_1^\pm,\dots,t_n^\pm]$ the function 
\[\widetilde{f}(t) \coloneqq   \sum_{a\in A} c_a t^{La + w}\]
is a Laurent polynomial. In the next lemma, we relate the Cox homogenization of $f$ and $\widetilde{f}$.

\begin{lemma}
\label{Lemma:CoordChange}
Let $A \subseteq \ZZ^n$ be a finite set such $\conv(A)$ is a product of (not necessary standard) full-dimensional simplices $\Delta_1, \dots , \Delta_k$, and let $L \in \mathbb{R}^{n\times n},\, w \in \RR^n,\, v \in \ZZ^{n+k}$ be as defined in \eqref{eq:Landw} and \eqref{eq:ChoiceOfv}.
For all $f \in \RR[t_1^\pm, \dots, t_n^\pm]_A$ the Cox homogenization of $\widetilde{f}$ satisfies $f_\cox(x^v) = \widetilde{f}_\cox(x)$.
\end{lemma}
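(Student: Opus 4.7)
The plan is to chase definitions, and the argument breaks into three short steps, none of which I expect to cause real trouble. The key identity to verify is that the exponent $\widetilde F^\top(La+w)+\widetilde b$ appearing in $\widetilde f_\cox(x)$ collapses to $\diag(v)(F^\top a+b)$; once this is established, the lemma is immediate.

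First, I would derive the halfspace description of $L\Delta+w$ in terms of the data $(F,b,L,w)$ attached to $\Delta$. Starting from $\Delta=\{p\in\RR^n:F^\top p+b\geq 0\}$ and substituting $p=L^{-1}(q-w)$ gives
\[
L\Delta+w\;=\;\{\,q\in\RR^n\,|\,F^\top L^{-1}q+(b-F^\top L^{-1}w)\geq 0\,\}.
\]
Multiplying the $i$-th row by $v_i$ as in \eqref{eq:ChoiceOfv} rescales the (possibly non-primitive) rows of $F^\top L^{-1}$ into the primitive ray generators forming the rows of $\widetilde F^\top=\diag(v)F^\top L^{-1}$. Since by Lemma~\ref{lemma:v_inKernel} the $v_i$'s share a common sign within each block coming from a factor $\Delta_j$, the direction of the inequalities is respected block-wise, and the corresponding offset vector is
\[
\widetilde b\;=\;\diag(v)\bigl(b-F^\top L^{-1}w\bigr).
\]

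Second, the definition \eqref{Eq:CoxHom} applied to $\widetilde f(t)=\sum_{a\in A}c_a t^{La+w}$ yields
\[
\widetilde f_\cox(x)\;=\;\sum_{a\in A}c_a\,x^{\widetilde F^\top(La+w)+\widetilde b}.
\]
A direct calculation with $\widetilde F^\top=\diag(v)F^\top L^{-1}$ gives
\[
\widetilde F^\top(La+w)+\widetilde b\;=\;\diag(v)F^\top a+\diag(v)F^\top L^{-1}w+\diag(v)\bigl(b-F^\top L^{-1}w\bigr)\;=\;\diag(v)\bigl(F^\top a+b\bigr),
\]
where the two copies of $\diag(v)F^\top L^{-1}w$ cancel.

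Finally, using $x^{\diag(v)y}=(x^v)^y$ for any exponent vector $y$, this yields
\[
\widetilde f_\cox(x)\;=\;\sum_{a\in A}c_a\,(x^v)^{F^\top a+b}\;=\;f_\cox(x^v),
\]
which is the claim. The only delicate point is the sign convention on $v$ in the first step: if some $v_i$ were negative, multiplying the $i$-th inequality by $v_i$ would flip its direction. However, \eqref{eq:ChoiceOfv} already commits to the choice of $v$ that makes $\widetilde F$ the ray generator matrix of $L\Delta+w$, so this sign ambiguity is harmless in the computation above.
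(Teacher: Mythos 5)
Your argument is correct and follows essentially the same route as the paper: derive the halfspace description of $L\Delta+w$ by substituting $p=L^{-1}(q-w)$, identify $\diag(v)F^\top L^{-1}$ as the ray generator data via \eqref{eq:ChoiceOfv}, and then cancel the two copies of $\diag(v)F^\top L^{-1}w$ in the exponent to get $\diag(v)(F^\top a+b)$. The paper's proof is the same computation with $V=\diag(v)$, so there is nothing to add beyond your (appropriate) remark that the sign of $v$ is fixed by the convention in \eqref{eq:ChoiceOfv}.
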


\begin{proof}
   Let $V = \diag(v)$ and let $F \in \ZZ^{n \times r}$ be the ray generator matrix of $\Sigma_\Delta$ and let $b \in \ZZ^r$ such that $\Delta = \{ \, p \in \RR^n \, | \, F^\top p + b \geq 0 \, \}$.
    By construction, we have
    \[L \Delta + w  = \{ \, q \in \RR^n \, | \, (V F^\top L^{-1}) q + V(b - F^\top L^{-1}w) \geq 0 \, \}.\]
    Moreover, the rays of the normal fan of $L\Delta + w$ are spanned by the rows of the matrix $V F^\top L^{-1}$. 
    Thus, for $f \in \RR[t_1,\dots,t_n]_A$ the Cox homogenixation of $\widetilde{f}$ is given by
    \begin{equation*}
        \widetilde{f}_\cox(x) = \sum_{a \in A} c_a x^{V F^\top L^{-1}(La+w) + V(b - F^\top L^{-1}w)} = \sum_{a \in A} c_a x^{V F^\top a  + V b} = f_\cox(t^v).  \hfill\qedhere
    \end{equation*}
\end{proof}
\bigskip

We finally state the main theorem of this section.

\begin{theorem}
\label{thm:polyaInCox}
Let $A \subseteq \ZZ^n$ be a finite set such $\conv(A)$ is a product of full-dimensional simplices $\Delta_1, \dots , \Delta_k$. Denote by $C_1,\dots,C_k \subseteq [r]$ the primitive collections of rays in $\Sigma_A$, and let $v \in \mathbb{Z}^r$ be defined as in~\eqref{eq:ChoiceOfv}. For $f\in \RR[t_1^{\pm},\ldots,t_n^{\pm}]_A$, the following are equivalent
\begin{enumerate}
    \item[(i)] $f$ is strictly $A$-copositive.
    \item[(ii)]  $\exists N_1,\ldots,N_k\in \NN\colon\,\prod_{j=1}^k\left(\sum_{i\in C_j} x_i \right)^{N_j }f_\cox(x^v) \in \RR_{\geq0}[x_1,\dots,x_r]$  and $\Newt(f) = \conv(A)$.
    \item[(iii)] $\exists N\in \NN\colon \, \left(  \sum_{\sigma \in \Sigma_A(n)} \prod_{i \notin \sigma} x_i  \right)^N f_\cox(x^v)$ has positive coefficients and $\Newt(f) = \conv(A)$.
\end{enumerate}
\end{theorem}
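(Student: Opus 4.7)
The plan is to reduce the case of products of arbitrary simplices to the special case of dilated standard simplices handled in Theorem~\ref{prop:polyaInCox_standard}, via the affine isomorphism $\Phi(p) = Lp + w$ built just before Lemma~\ref{Lemma:CoordChange}. This map sends $\conv(A)$ onto the product of dilated standard simplices $\widetilde{\Delta} = \Phi(\conv(A))$ and, being defined over $\ZZ$, sends $A$ to a lattice set $\widetilde{A} \subseteq \ZZ^n$. I associate to $f$ the Laurent polynomial $\widetilde{f}(t) = \sum_{a \in A} c_a t^{La+w}$, supported on $\widetilde{A}$; Theorem~\ref{prop:polyaInCox_standard} applies directly to $\widetilde{f}$.

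The first key step is to prove that $f$ is strictly $A$-copositive if and only if $\widetilde{f}$ is strictly $\widetilde{A}$-copositive. By Proposition~\ref{prop:interiorCAcox}, each property amounts to the corresponding Cox homogenization being nonnegative on $\RR^r_{\geq 0}$ with vanishing locus exactly the irrelevant variety. Lemma~\ref{Lemma:CoordChange} identifies the two Cox polynomials via $\widetilde{f}_\cox(x) = f_\cox(x^v)$, and after relabeling vertices of each factor $\Delta_j$ so that $\det(M_j) > 0$, Lemma~\ref{lemma:v_inKernel} guarantees that $v \in \ZZ^r_{>0}$. The map $x \mapsto x^v$ is then a homeomorphism of $\RR^r_{\geq 0}$ preserving the coordinate-hyperplane structure. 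Since $\Phi$ is an affine isomorphism, $\Sigma_A$ and $\Sigma_{\widetilde{A}}$ are combinatorially isomorphic under the indexing of~\eqref{eq:ChoiceOfv}, so the primitive collections $C_j$, the maximal cones, and the irrelevant varieties $Z_A = Z_{\widetilde{A}}$ all correspond, from which the equivalence follows.

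With this equivalence in hand, I apply Theorem~\ref{prop:polyaInCox_standard} to $\widetilde{f}$, obtaining the equivalence of its strict $\widetilde{A}$-copositivity with Pólya-type certificates for $\widetilde{f}_\cox$ using the primitive collections and irrelevant-ideal generators of $\Sigma_{\widetilde{A}}$, together with $\Newt(\widetilde{f}) = \conv(\widetilde{A})$. Substituting $\widetilde{f}_\cox(x) = f_\cox(x^v)$ and using the combinatorial bijection between the two fans then yields exactly conditions (ii) and (iii) of the theorem; the Newton polytope condition $\Newt(f) = \conv(A)$ follows from $\Newt(\widetilde{f}) = \conv(\widetilde{A})$ because $\Phi|_A$ is a bijection. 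Running the chain of equivalences in the opposite direction shows that either (ii) or (iii) implies (i).

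The main obstacle is the bookkeeping of the combinatorial dictionary between $\Sigma_A$ and $\Sigma_{\widetilde{A}}$ under the rescaling $\widetilde{F} = (L^{-1})^\top F \diag(v)$ in~\eqref{eq:ChoiceOfv}, especially to verify that primitive collections, maximal cones, and the irrelevant varieties really match under the chosen indexing. Once the sign convention ensuring $v \in \ZZ^r_{>0}$ is in place (so that $f_\cox(x^v)$ is an honest polynomial and not a genuine Laurent expression), the remainder is a direct translation of Theorem~\ref{prop:polyaInCox_standard} through the substitution $x \mapsto x^v$.
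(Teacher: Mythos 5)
Your proposal is correct and follows essentially the same route as the paper: reduce to the dilated-standard-simplex case via the affine map $\Phi(p)=Lp+w$, identify $\widetilde{f}_\cox(x)=f_\cox(x^v)$ through Lemma~\ref{Lemma:CoordChange}, and invoke Theorem~\ref{prop:polyaInCox_standard}. The only difference is that you spell out the equivalence ``$f$ strictly $A$-copositive $\Leftrightarrow$ $\widetilde{f}$ strictly $(LA+w)$-copositive'' (via Proposition~\ref{prop:interiorCAcox}, the positivity of $v$, and the matching of the fans and irrelevant loci), which the paper asserts without detail.
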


\begin{proof}
    The polynomial $f$ is strictly $A$-copositive if and only if $\widetilde{f}$ is strictly $(LA+w)$-copositive. Now the statement follows from Lemma~\ref{Lemma:CoordChange} and Theorem~\ref{prop:polyaInCox_standard}.
\end{proof}

\section{Examples and counterexamples}
\label{sec:Examples}
In this section, we discuss the Pólya certificates for sparse polynomials from Section~\ref{sec:SparsePolya} and~\ref{sec:Cox} and consider their possible generalizations through a series of examples and counterexamples. We begin with an example that illustrates the objects studied in Section~\ref{sec:ProdSimplex} and compares the Pólya multipliers given in Theorem~\ref{Thm:SparsePolya} and Theorem~\ref{thm:polyaInCox}.
\begin{example}
\label{Ex:Running:Cox}
In the following, we follow the notations of~\eqref{eq:Landw}.
Consider the support set $A$ from Example~\ref{Ex:RunningDehom}. The convex hull of $A$ is the product of the simplices $\Delta_1 = \conv((0,0),(1,0),(0,2)) \subseteq \RR^2$ and $\Delta_1 = \conv((0,1)) \subseteq \RR$. To transform these to standard simplices, we consider $a_2^{(1)} = (0,0), a_1^{(2)} = 0$ and 
\begin{align*}
    M_1 = \begin{pmatrix}
        1 & 0 \\
        0 & 2
    \end{pmatrix} \in \ZZ^{2 \times 2}, \qquad     M_2 = \begin{pmatrix}
        1
    \end{pmatrix} \in \ZZ^{1 \times 1}.
\end{align*}
These give rise to the affine linear map $\Phi\colon \RR^3 \to \RR^3, p \mapsto Lp + w$ where
\begin{align*}
    L = \begin{pmatrix}
        2 & 0 & 0\\
        0 & 1 & 0\\
        0 & 0 & 1
    \end{pmatrix}, \qquad w = \begin{pmatrix}
        0\\
        0\\
        0
    \end{pmatrix}.
\end{align*}
The transformed polytope $\Phi(\Delta_1 \times \Delta_2)$ is a $2$-dilated standard simplex in $\RR^3$. For $f$ as in \eqref{Eq:RunningDehom}, the transformed polynomial reads
 \begin{align*}
         \widetilde{f} = 1+t_1^2 - 1.9t_2 + t_2^2  +t_3 + t_1^2t_3-1.9t_2t_3+t_2^2t_3 = f(t_1^2,t_2,t_3).
    \end{align*}
and its Cox homogenization equals
\begin{align*}
    \widetilde{f}_\cox = x_3^2x_5 + x_1^2 x_5 -1.9 x_2x_3x_5 + x_2^2x_5+x_3^2x_4 + x_1^2x_4-1.9x_2x_3x_4+x_2^2x_4.
\end{align*}
To compare $\widetilde{f}_\cox$ with $f_\cox$ from~\eqref{Eq:RunningCoxDehom}, we compute the vector $v$ as in~\eqref{eq:ChoiceOfv}. This is the unique vector $v$ satisfying 
\begin{align}
    (L^{-1})^\top F \diag(v) = \begin{pmatrix}
        \tfrac{1}{2}v_1 & 0 & -v_3 & 0 & 0 \\
        0 & v_2 & -v_3 & 0 & 0 \\
        0 & 0 &  0 & v_4 & -v_5 \\
    \end{pmatrix} = \begin{pmatrix}
        1 & 0 & -1 & 0 & 0 \\
        0 & 1 & -1 & 0 & 0 \\
        0 & 0 &  0 & 1 & -1 \\
    \end{pmatrix}
\end{align}
 Thus $v = (2,1,1,1,1)$ and we have $\widetilde{f}_\cox = f_\cox(x_1^2,x_2,x_3,x_4,x_5)$ by Lemma~\ref{Lemma:CoordChange}.

The irrelevant ideal has 6 monomial generators corresponding to the 6 maximal cones in $\Sigma_A$, namely 
\begin{align*}
     B(A) =\langle\, x_3x_5,\, x_2x_5,\, x_1x_5,\, x_3x_4,\, x_2x_4,\, x_1x_4\,\rangle=\langle\, x_1,\,x_2,\,x_3\,\rangle \cap\langle\, x_4,\,x_5\,\rangle.
 \end{align*}
 There are two primitive collections of rays, namely $C_1=\{1,2,3\}$ and $\,C_2=\{4,5\}$. By Theorem~\ref{thm:polyaInCox}(iii), $f$ is strictly $A$-copositive if and only if there exists $N\in \NN$ such that \begin{align}
 \label{eq:examplewithirrelevantgens}
(x_3x_5+x_2x_5+x_1x_5+x_3x_4+x_2x_4+x_1x_4)^N f_\cox(x_1^2,x_2,x_3,x_4,x_5)
 \end{align}
 has positive coefficients. In this example the smallest exponent with that property is $N=38$, for which the product in \eqref{eq:examplewithirrelevantgens} has 34320 terms. Part (ii) of Theorem \ref{thm:polyaInCox} states that the strict $A$-copositivity of $f$ can also be certified by computing
 \begin{align}
 \label{eq:examplewithprimitivegens}
     (x_1+x_2+x_3)^{N_1}(x_4+x_5)^{N_2}f_\cox(x_1^2,x_2,x_3,x_4,x_5).
 \end{align}
 For $N_1=38,N_2=0$ the polynomial in (\ref{eq:examplewithprimitivegens}) has only 1716 terms each with a positive coefficient, thereby certifying copositivity. Alternatively, we also compute the certificate from Theorem \ref{Thm:SparsePolya}: 
 \[ (1+t_1+t_2+t_3+t_2^2+t_1t_3+t_2t_3+t_2^2t_3)^N f\]
 has nonnegative coefficients for $N\geq 14$. In this case, the product has 4096 terms.
\end{example}

We now provide two examples showcasing that the special assumptions on the polytope $\conv(A)$ cannot easily be weakened in Theorem \ref{thm:polyaInCox}.
The multipliers involving $f_\cox(x^v)$ from (ii) and (iii) in Theorem \ref{thm:polyaInCox} can be defined for any support set $A$.
  By Lemma~\ref{lemma:v_inKernel}, the vector $v \in \ZZ^r$ lies in the kernel of the ray generator matrix $F$, and without loss of generality we might assume that it has only positive coordinates. 
  In the following two examples, we illustrates that (ii) and (iii) in Theorem \ref{thm:polyaInCox} might fail to detect copositivity for any choice of $v \in \ker(F) \cap \ZZ^{r}_{>0} $.
    
\begin{example}
\label{ex:FirstCounterExample}
    To showcase that the multipliers in Theorem \ref{thm:polyaInCox}(ii) and (iii) cannot be used to certify copositivity, we consider the polynomial \begin{align*}
        f=1+t_1+t_2+t_1^2-2t_1t_2+t_2^2+t_1t_2^2+t_1^2t_2.
    \end{align*}
   By Theorem \ref{Thm:Main}, $f$ is strictly $A$-copositive for $A = \supp(f)$ since the polynomial\begin{align*}
(1+t_1+t_2+t_1^2+t_1t_2+t_2^2+t_1t_2^2+t_1^2t_2)f 
    \end{align*}
    has positive coefficients. We depicted the polytope $\conv(A)$ in Figure~\ref{FIG2}(a). Notice in particular that $\conv(A)$ does not satisfy the assumptions in Theorem~\ref{thm:polyaInCox}. It has $5$ facets, and its ray generator matrix is given by 
\begin{align*}
F = \begin{pmatrix}
    1 & 0 & -1 & -1 & 0 \\
    0 & 1 &  0 & -1 & -1
\end{pmatrix},
\end{align*}
which gives rise to the Cox homogenization
 \begin{align*}
f_\cox=  x_3^2x_4^3x_5^2 + x_1x_3x_4^2x_5^2 + x_2x_3^2x_4^2x_5 + x_1^2x_4x_5^2 -2x_1x_2x_3x_4x_5+x_2^2x_3^2x_4+x_1x_2^2x_3+x_1^2x_2x_5
    \end{align*}

    The matrix $F$ has a three-dimensional kernel,  and each vector $v \in \ker(F) \cap \ZZ^5_{>0}$ has the form $v=( \lambda_1 + \lambda_2, \lambda_2 + \lambda_3, \lambda_1,\lambda_2,\lambda_3)$ for some $\lambda_1,\lambda_2,\lambda_3 \in \ZZ_{>0}$.  
 The Cox homogenization is the homogeneous polynomial
  \begin{align*}
f_\cox&(x^v) =  x_3^{2\lambda_1}x_4^{3\lambda_2}x_5^{2\lambda_3} + x_1^{\lambda_1 + \lambda_2} x_3^{\lambda_1} x_4^{2\lambda_2} x_5^{2\lambda_3} + x_2^{\lambda_2 + \lambda_3} x_3^{2\lambda_1} x_4^{2\lambda_2}x_5^{\lambda_3} + x_1^{2(\lambda_1+\lambda_2)}x_4^{\lambda_2}x_5^{2\lambda_3} \\
&-2x_1^{\lambda_1 + \lambda_2} x_2^{\lambda_2 + \lambda_3} x_3^{\lambda_1} x_4^{\lambda_2} x_5^{\lambda_3}+x_2^{2(\lambda_2 + \lambda_3)}x_3^{2\lambda_1}x_4^{\lambda_2}+x_1^{\lambda_1+\lambda_2} x_2^{2(\lambda_2 + \lambda_3)} x_3^{\lambda_1}+x_1^{2(\lambda_1 + \lambda_2)}x_2^{\lambda_2 + \lambda_3}x_5^{\lambda_3}
    \end{align*}
    of total degree $2\lambda_1 + 3 \lambda_2 + 2 \lambda_3$.
    The irrelevant ideal is generated by the five monomials $x_1x_2x_3,\,x_2x_3x_4,\,x_3x_4x_5,\,x_4x_5x_1,\,x_5x_1x_2$.
In contrast to Theorem~\ref{thm:polyaInCox}, which applies when $\conv(A)$ is a product of simplices, the copositivity of $f$ cannot be certified by computing
  \begin{align}
  \label{eq:counterex1}
(x_1 x_2 x_3\,+\,x_2 x_3 x_4\,+\,x_3 x_4 x_5\,+\,x_4 x_5 x_1\,+\,x_1 x_2)^N f_{\cox}(x^v),
    \end{align} 
    since the monomial $x_1^{N+(\lambda_1+\lambda_2)}x_2^{N+(\lambda_2 + \lambda_3)}x_3^{N+\lambda_1}x_4^{\lambda_2}x_5^{\lambda_3}$ has a negative coefficient $-2$ for any $N \in \mathbb{N}$. 
To justify this claim, we argue that there is unique way to obtain this monomial in the product~\eqref{eq:counterex1}, namely as the product $x_1^{N}x_2^{N}x_3^{N}$ and $-2x_1^{\lambda_1 + \lambda_2}x_2^{\lambda_2 + \lambda_3}x_3^{\lambda_1}x_4^{\lambda_2}x_5^{\lambda_3}$. We first argue that the only monomial of $f_\cox(x^v)$ that can contribute to the coefficient of $x_1^{N+(\lambda_1+\lambda_2)}x_2^{N+(\lambda_2 + \lambda_3)}x_3^{N+\lambda_1}x_4^{\lambda_2}x_5^{\lambda_3}$ is $-2x_1^{\lambda_1 + \lambda_2}x_2^{\lambda_2 + \lambda_3}x_3^{\lambda_1}x_4^{\lambda_2}x_5^{\lambda_3}$. By looking at the degree in the variable $x_1$ we see that a contributing monomial must be divisible by $x_1^{\lambda_1+\lambda_2}$, similarly by looking at the degrees in $x_2$ and $x_3$ we get divisibility by $x_2^{\lambda_2+\lambda_3}$ and $x_3^{\lambda_1}$. This rules out all but two monomials in $f_\cox(x^v)$, leaving us with $-2x_1^{\lambda_1 + \lambda_2}x_2^{\lambda_2 + \lambda_3}x_3^{\lambda_1}x_4^{\lambda_2}x_5^{\lambda_3}$ and $x_1^{\lambda_1+\lambda_2}x_2^{2(\lambda_2+\lambda_3)}x_3^{\lambda_1}$. The second monomial does not work since in order to complement it to the desired monomial $x_1^{N+(\lambda_1+\lambda_2)}x_2^{N+(\lambda_2 + \lambda_3)}x_3^{N+\lambda_1}x_4^{\lambda_2}x_5^{\lambda_3}$ by taking its product with $N$ monomials in the multiplier, we would be forced to pick only monomials in the multiplier that are divisible by $x_1x_3$ (this in order to make the degree in $x_1$ and $x_3$ match). The only such monomial is $x_1x_2x_3$, which does not contain $x_4,x_5$. Hence these terms cannot contribute to the coefficient of $x_1^{N+(\lambda_1+\lambda_2)}x_2^{N+(\lambda_2 + \lambda_3)}x_3^{N+\lambda_1}x_4^{\lambda_2}x_5^{\lambda_3}$ which is divisible by $x_4x_5$. For the first monomial  $-2x_1^{\lambda_1 + \lambda_2}x_2^{\lambda_2 + \lambda_3}x_3^{\lambda_1}x_4^{\lambda_2}x_5^{\lambda_3}$ in $f_\cox(x^v)$, there is a unique way to complement it to desired monomial $x_1^{N+(\lambda_1+\lambda_2)}x_2^{N+(\lambda_2 + \lambda_3)}x_3^{N+\lambda_1}x_4^{\lambda_2}x_5^{\lambda_3}$ by picking $N$ times the monomial $x_1x_2x_3$ in the multiplier. Hence in the product we get the negative coefficient $-2$, and thus Theorem~\ref{thm:polyaInCox}(iii) does not apply in this example.
    
    The primitive collections of rays in $\Sigma_A$ are $\{1,3\},\{1,4\},\{2,4\},\{2,5\},\{3,5\}$. Since these are not disjoint, the multiplier used in~\eqref{eq:counterex1} is not equal to the product of sums over primitive collections. 
One might therefore hope that the multiplier from part (ii) of Theorem~\ref{thm:polyaInCox} could be used to certify copositivity of $f$. This, however, is not the case, since
\begin{align}
  \label{eq:counterex2}
        (x_1+x_3)^{N_1}(x_1+x_4)^{N_2}(x_2+x_4)^{N_3}(x_2+x_5)^{N_4}(x_3+x_5)^{N_5}f_\cox(x^v)
    \end{align}
    also has negative coefficients for any choice of $N_1,N_2,N_3,N_4,N_5$.
   To see this notice first that by possibly increasing some of the $N_i$, we can assume $N_1=N_2=N_3=N_4=N_5\eqqcolon N$.  Then a similar argument as above shows that the polynomial in \eqref{eq:counterex2} always contains the term $-2x_1^{N+\lambda_1 + \lambda_2}x_2^{N+\lambda_2 + \lambda_3}x_3^{2N+\lambda_1}x_4^{N+\lambda_2}x_5^{\lambda_3}$, which has a negative coefficient. Hence criterion (ii) in Theorem~\ref{thm:polyaInCox} also fails to detect copositivity of $f$. 
\end{example}

\begin{figure}[t]
\centering
\begin{minipage}[h]{0.3\textwidth}
\centering
\includegraphics[scale=0.4]{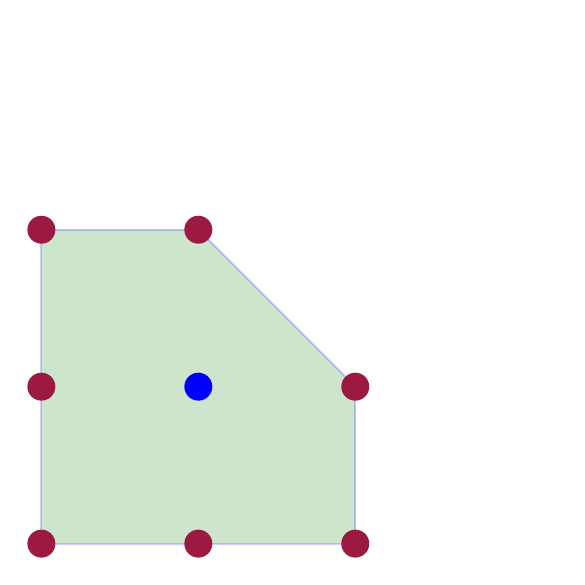}

{\small (a)}
\end{minipage}
\hspace{1 pt}
\begin{minipage}[h]{0.3\textwidth}
\centering
\includegraphics[scale=0.4]{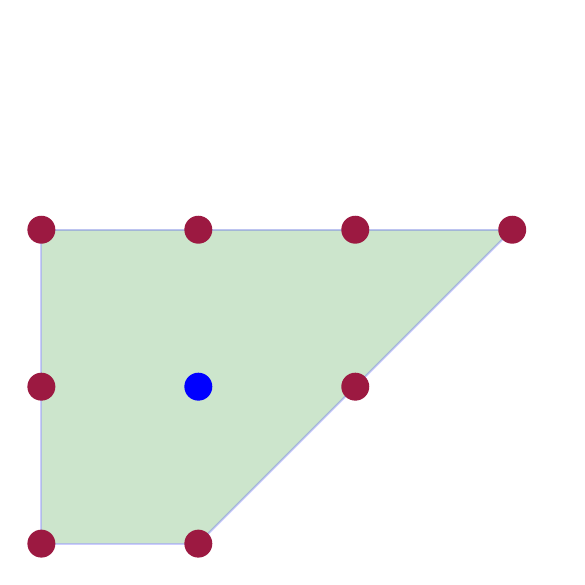}

{\small (b)}
\end{minipage}
\hspace{1 pt}
\begin{minipage}[h]{0.3\textwidth}
\centering
\includegraphics[scale=0.4]{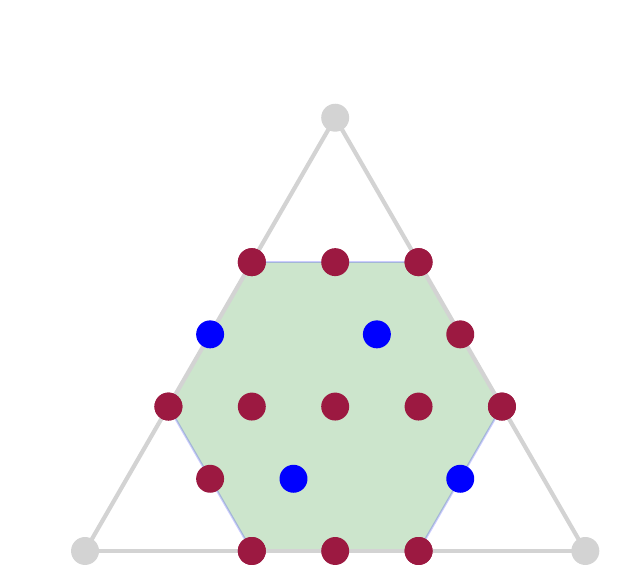}

{\small (c)}
\end{minipage}
\caption{{\small  Newton polytopes of the polynomials considered in Example~\ref{ex:FirstCounterExample},~\ref{ex:counterex2} and~\ref{Ex:MinkowskiSum}. }}\label{FIG2}
\end{figure}

One might be tempted to believe that the criterion in Theorem~\eqref{thm:polyaInCox}(ii) fails in Example~\refeq{ex:FirstCounterExample} because the primitive collections are not disjoint. Recall that disjointness played a crucial role in the proof of Theorem~\ref{prop:polyaInCox_standard}, as it enabled a convenient change of variables in \eqref{eq:proof_changeofcoords} that eliminated all terms except those with positive coefficients.
However, even when the primitive collections are disjoint, we get another problem: the positive kernel vector $v$ in general only makes $f_\cox(x^v)$ homogeneous with respect to the total degree of all variables. 
But in the proof of Theorem \ref{thm:polyaInCox} we also need homogeneity with respect to the total degree of variables within the same primitive collection.
For a product of simplices, this is guaranteed by the block diagonal shape of the ray generator matrix (Lemma~\ref{lem:simplicesKernel}), but the conclusion fails in general as the following examples illustrate.

\begin{example}
\label{ex:counterex2}
    Consider the polynomial \begin{align*}
f= 1 + t_1 + t_2 -2t_1t_2  +t_1^2t_2  +t_2^2  +t_1t_2^2 +t_1^2t_2^2 + t_1^3t_2^2
    \end{align*}
 It is strictly $A$-copositive for $A = \supp(f)$ since \begin{align*}
(1 + t_1 + t_2 + t_1t_2  +t_1^2t_2  +t_2^2  +t_1t_2^2 +t_1^2t_2^2 + t_1^3t_2^2)f 
    \end{align*}
 has nonnegative coefficients, so we can apply Theorem \ref{Thm:Main}. The Newton polytope of $f$ is a quadrilateral, see Figure~\ref{FIG2}(b). 
The ray generator matrix of the normal cone of $\conv(A)$ equals
\begin{align}
    F = \begin{pmatrix}
        1 & 0 & -1 & 0 \\
        0 & 1 &  1 & -1
    \end{pmatrix}.
\end{align}
The Cox homogenization of $f$ is given by
\begin{align*}
f_\cox= x_3 x_4^2 + x_1x_4^2 + x_2x_3^2 x_4  - 2 x_1x_2x_3x_4  + x_1^2 x_2x_4 + x_2^2 x_3^3 + x_1 x_2^2 x_3^2 + x_1^2 x_2^2 x_3 +x_1^3 x_2^2
\end{align*}
In this example, the vectors in $\ker(F)\cap \ZZ^4_{>0}$ are given by $v=(\lambda_1, \lambda_1 + \lambda_2 ,\lambda_1 ,2\lambda_1 + \lambda_2)$ for $\lambda_1,\lambda_2 \in \ZZ_{>0}$, which give 
\begin{equation}
\small
\begin{aligned}
\label{eq:examplefcoxno-2trick}
&f_\cox(x^v)= x_3^{\lambda_1} x_4^{2(2\lambda_1+\lambda_2)} + x_1^{\lambda_1}x_4^{2(2\lambda_1 + \lambda_2)} + x_2^{\lambda_1 + \lambda_2} x_3^{2\lambda_1 } x_4^{2\lambda_1 + \lambda_2}  - 2 x_1^{\lambda_1}x_2^{\lambda_1 + \lambda_2}x_3^{\lambda_1}x_4^{2\lambda_1 + \lambda_2} \\ & + x_1^{2\lambda_1} x_2^{\lambda_1+\lambda_2} x_4^{2\lambda_1 + \lambda_2} + x_2^{2(\lambda_1 + \lambda_2)} x_3^{3\lambda_1} + x_1^{\lambda_1} x_2^{2(\lambda_1 + \lambda_2)} x_3^{2\lambda_1} + x_1^{2\lambda_1} x_2^{2(\lambda_1+\lambda_2)} x_3^{\lambda_1} +x_1^{3\lambda_1} x_2^{2(\lambda_1 + \lambda_2)}.
\end{aligned}
\end{equation}
The primitive collections are $\{1,3\},\{2,4\}$.
The polynomial $f_\cox(x^v)$ is homogeneous with respect to the weight $(1,1,1,1)$, but not with respect to $(1,0,1,0)$ and $(0,1,0,1)$ individually.
Both criteria (ii) and (iii) of Theorem \ref{thm:polyaInCox} fail to detect the copositivity of $f$. 
To see this, note that $(x_1+x_3)(x_2+x_4)=x_1x_2+x_1x_4+x_2x_3+x_3x_4$. Therefore, it is enough to show that condition (iii) fails.
We claim that for any $N\geq \lambda_1$ the polynomial
\begin{align*}
(x_1x_2+x_1x_4+x_2x_3+x_3x_4)^{2N}f_\cox(x^v)
\end{align*}
 has negative coefficient $2\left(\binom{2N}{N-\lambda_1}-\binom{2N}{N}\right)<0$ in front of $x_1^{N+\lambda_1}x_2^{2N+\lambda_1+\lambda_2}x_3^{N+\lambda_1}x_4^{2\lambda_1+\lambda_2}$. Once this claim is proven, since no large even power $N$ exists for which the product in (iii) of Theorem~\ref{thm:polyaInCox} has positive coefficients, no $N'>0$ can work.
To justify the claim, we count all possible ways to obtain the monomial $x_1^{N+\lambda_1}x_2^{2N+\lambda_1+\lambda_2}x_3^{N+\lambda_1}x_4^{2\lambda_1+\lambda_2}$ by multiplying $2N$ monomials from the multiplier $x_1x_2+x_1x_4+x_2x_3+x_3x_4$ and one monomial from $f_\cox(x^v)$. We first look at the possible monomials in $f_\cox(x^v)$ that can be used. By looking at the degree in $x_2$ we see that such a monomial must be divisible by $x_2^{\lambda_1+\lambda_2}$, hence eliminating the first two monomials (in the order used in \eqref{eq:examplefcoxno-2trick}) in $f_\cox(x^v)$. By looking at the degree with respect to the weight $(0,1,0,1)$, i.e., the combined degree in $x_2$ and $x_4$, we see that any contributing monomial must have degree at least $3\lambda_1+2\lambda_2$ with respect to this weight. In this way, we see that the last four monomials (again in the order used in \eqref{eq:examplefcoxno-2trick}) of $f_\cox(x^v)$ can not contribute. The remaining three monomials of $f_\cox(x^v)$ contribute to the desired monomial $x_1^{N+\lambda_1}x_2^{2N+\lambda_1+\lambda_2}x_3^{N+\lambda_1}x_4^{2\lambda_1+\lambda_2}$ in the product as follows: \begin{itemize}
    \item {$x_2^{\lambda_1+\lambda_2}x_3^{2\lambda_1}x_4^{2\lambda_1+\lambda_2}$:} This monomial can be complemented to the desired monomial  by choosing $N-\lambda_1$-times the monomial $x_2x_3$ and $N+\lambda_1$-times the monomial $x_1x_2$ when distributing the product $(x_1x_2+x_1x_4+x_2x_3+x_3x_4)^{2N}f_\cox(x^v)$. There are $\binom{2N}{N-\lambda_1}$ possible ways to choose these monomials from the $2N$ factors.
    \item {$x_1^{2\lambda_2}x_2^{\lambda_1+\lambda_2}x_4^{2\lambda_1+\lambda_2}$:} This monomial can be complemented to the desired monomial by choosing $N-\lambda_1$-times the monomial $x_1x_2$ and $N+\lambda_1$-times the monomial $x_2x_3$ when distributing the product $(x_1x_2+x_1x_4+x_2x_3+x_3x_4)^{2N}f_\cox(x^v)$. There are $\binom{2N}{N-\lambda_1}$ possible ways to choose these monomials from the $2N$ factors.
    \item {$- 2 x_1^{\lambda_1}x_2^{\lambda_1 + \lambda_2}x_3^{\lambda_1}x_4^{2\lambda_1 + \lambda_2}$:} This monomial can be complemented to the desired monomial by choosing $N$-times the monomial $x_1x_2$ and $N$-times the monomial $x_2x_3$ when distributing the product $(x_1x_2+x_1x_4+x_2x_3+x_3x_4)^{2N}f_\cox(x^v)$. There are $\binom{2N}{N}$ possible ways to choose these monomials from the $2N$ factors and the coefficient of the this monomial in $f_\cox(x^v)$ is $-2$.
\end{itemize}
Summing up all three cases we find that the coefficient of $x_1^{N+\lambda_1}x_2^{2N+\lambda_1+\lambda_2}x_3^{N+\lambda_1}x_4^{2\lambda_1+\lambda_2}$ in the product $(x_1x_2+x_1x_4+x_2x_3+x_3x_4)^{2N}f_\cox(x^v)$ is \begin{align*}
    \binom{2N}{N-\lambda_1}+\binom{2N}{N-\lambda_1}-2\binom{2N}{N}= 2\left(\binom{2N}{N-\lambda_1}-\binom{2N}{N}\right)
\end{align*}
as claimed. This is always negative since central binomial coefficients are maximal.
\end{example}

Next, we turn to Theorem~\ref{Thm:SparsePolya} and discuss possible generalizations. One might hope to allow different summands in the Minkowski sum $k\cdot A = A + \dots + A$ and still obtain a Pólya certificate, as was the case for the polynomials considered in \cite[Theorem 6.1]{SturmfelsTelek}. However, the claim does not hold in general.

\begin{example}
\label{Ex:MinkowskiSum}
     Consider the polynomial \begin{align*}
f=&2t_1^4t_2^2+t_1^3t_2^3+t_1^2t_2^4+2t_1^4t_2t_3-5t_1^3t_2^2t_3-2t_1t_2^4t_3+2t_1^4t_3^2+t_1^3t_2t_3^2+12t_1^2t_2^2t_3^2+t_1t_2^3t_3^2\\&+2t_2^4t_3^2-2t_1^3t_3^3-5t_1t_2^2t_3^3+2t_2^3t_3^3+t_1^2t_3^4+t_1t_2t_3^4+2t_2^2t_3^4
     \end{align*}
The Newton polytope of $f$ is the hexagon with vertices given by the columns of the matrix \begin{align*}
    \begin{pmatrix}
         4 & 2 & 4 & 0 & 2 & 0 \\ 2 & 4 & 0 & 4 & 0 & 2 \\ 0 & 0 & 2 & 2 & 4 & 4
    \end{pmatrix}.
\end{align*}
see Figure~\ref{FIG2}(c) for an illustration.
For $A = \supp(f)$, $f$ is strictly $A$-copositive by Theorem \ref{Thm:Main}, since the polynomial\begin{align*}
    \Big(\sum_{a\in A}t^a\Big)^3f
\end{align*}
has nonnegative coefficients.
The polynomial $f$ factors into two irreducible polynomials\begin{align*}
    f_1&=2t_1^2+t_1t_2+t_2^2-2t_1t_3+t_2t_3+t_3^2, \\
    f_2&=t_1^2t_2^2+t_1^2t_2t_3-2t_1t_2^2t_3+t_1^2t_3^2+t_1t_2t_3^2+2t_2^2t_3^2.
\end{align*}
Hence the Newton polytope of $f$ is the Minkowski sum of two triangles, namely the two Newton polytopes of $f_1$ and $f_2$. We get two possible candidates for Pólya multipliers of $f$ by summing the monomials in $f_1$ and $f_2$ respectively \begin{align*}
    g_1&=t_1^2+t_1t_2+t_2^2+t_1t_3+t_2t_3+t_3^2, \\
    g_2&=t_1^2t_2^2+t_1^2t_2t_3+t_1t_2^2t_3+t_1^2t_3^2+t_1t_2t_3^2+t_2^2t_3^2.
\end{align*}
However, by looking at the highest degree in the variable $t_2$, we easily see that the polynomial $g_1^Nf$ has negative coefficient $-2$ in front of the monomial $t_1t_2^{2N+4}t_3$ for all $N\in \NN$. Similarly by looking at the degree zero part in the variable $y$, we find that in $g_2^Nf$, the monomial $t_1^{2N+3}t_3^{2N+3}$ always has negative coefficient $-2$. We conclude that in general Minkowski summands do not suffice as Pólya multipliers.
\end{example}

\section{Symanzik polynomials}
\label{sec:Applications}
\label{sec:Feynman}

The convergence of Feynman integrals is closely linked to copositivity of sparse polynomials.
In this section, we briefly review this relation and showcase how our sparse version of Pólya's method (Theorem \ref{Thm:SparsePolya}) can be applied to the study of convergence of Feynman integrals, extending beyond the cases considered in \cite{SturmfelsTelek}.

Feynman integrals are fundamental objects in particle physics, serving as building blocks of scattering amplitudes, which describe the probabilities of specific outcomes in scattering experiments.
For a more detailed overview of their role in particle physics, we refer the reader to \cite{Weinzierl}.
Here, we recall only the absolutely necessary mathematical background required to understand our results.

The possible interactions between particles are represented by \emph{Feynman diagrams}, which are connected graphs $G$ equipped with some kinematic data. A \emph{Feynman integral} (in the Feynman parameter representation) is defined as
\begin{align}
\label{Eq:FeynmParamInt}
    I_G(z) \,\,= \int_{\mathbb{P}_{>0}^{n-1}} \frac{\left(\,\prod_{i=1}^n x_i^{\nu_i} \right)\mathcal{U}(x)^{|\nu|-(\ell+1) D /2}}{\mathcal{F}(x)^{|\nu| - \ell D / 2}} \; \text{\footnotesize$ \left(\sum_{i=1}^n(-1)^{n-i} \frac{\mathrm{~d} x_1}{x_1} \wedge \cdots \wedge \frac{\widehat{\mathrm{~d} x_i}}{x_i} \wedge \cdots \wedge \frac{\mathrm{~d} x_n}{x_n}\right)$},
\end{align}
where $D,\ell,n \in \mathbb{N}$, $\nu_1, \dots, \nu_n \in \mathbb{R}$, $|\nu| = \nu_1 + \dots + \nu_n$ and $\mathcal{U}(x), \mathcal{F}(x)$ are polynomials associated to the Feynman diagram.
We will elaborate on the construction of these polynomials and the kinematic parameters in more detail below. First, however, we recall the following theorem, which relates copositivity to the convergence of Feynman integrals.

\begin{theorem}
\label{Prop:ConvergInER}
\cite[Theorem 3]{Borinsky} 
The Feynman integral in \eqref{Eq:FeynmParamInt} converges if
\begin{itemize}
\item[(i)] $\mathcal{F}_z$ is strictly $A$-copositive with respect to $A = \supp(\mathcal{F}_z)$, 
    \item[(ii)] $\Newt\, \! \left(  \left(\,\prod_{i=1}^n x_i^{\nu_i} \right) \mathcal{U}(x)^{|\nu|-(\ell+1) D /2} \right)
\, \subseteq \,\relint  \Newt \!  \left(\mathcal{F}(x)^{|\nu| - \ell D / 2} \right)$.
\end{itemize}
\end{theorem}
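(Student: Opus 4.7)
The plan is to combine the P\'olya-type lower bound from Theorem~\ref{Thm:SparsePolya} with a tropical/toric analysis of the integrand, reducing convergence to a monomial power-counting criterion that is exactly encoded by condition~(ii).

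First, I would apply Theorem~\ref{Thm:SparsePolya} to $\mathcal{F}_z$ with $A = \supp(\mathcal{F}_z)$. Strict $A$-copositivity yields $N \in \NN$, $k \in \NN$ with $\supp(\mathcal{F}_z) \subseteq k \cdot A$, a constant $c > 0$, and the identity $\Newt(\mathcal{F}_z) = \conv(k \cdot A)$, together with
\[
\left(\sum_{a \in A} x^a\right)^N \mathcal{F}_z(x) \;\geq\; c \sum_{b \in (k+N)\cdot A} x^b \qquad \text{on } \RR_{>0}^n.
\]
Dividing by the positive polynomial $(\sum_a x^a)^N$, this gives $\mathcal{F}_z(x) \geq c' \sum_{a \in k \cdot A} x^a$. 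Combined with the trivial upper bound $|\mathcal{F}_z(x)| \leq C \sum_a x^a$ and the elementary estimate $\max \leq \sum \leq |S|\cdot\max$ for sums of positive monomials indexed by $S$, one obtains the tropical equivalence
\[
\mathcal{F}_z(x) \;\asymp\; \max_{a \in \supp(\mathcal{F}_z)} x^a \qquad \text{on } \RR_{>0}^n,
\]
where $\asymp$ denotes equivalence up to positive multiplicative constants. Since the first Symanzik polynomial $\mathcal{U}$ has only positive coefficients (one per spanning tree of $G$), the analogous equivalence for $\mathcal{U}$ is immediate.

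Next, I would apply these bounds to the integrand in~\eqref{Eq:FeynmParamInt}, using the upper or the lower monomial estimate on each factor depending on the signs of $\alpha = |\nu| - (\ell+1)D/2$ and $\beta = |\nu| - \ell D/2$. In absolute value, the integrand is then bounded above by a constant times a piecewise-monomial tropical function $\max_{a \in S_{\mathrm{num}}} x^a / \max_{a' \in S_{\mathrm{den}}} x^{a'}$, where $S_{\mathrm{num}}$ and $S_{\mathrm{den}}$ are the supports of the numerator $(\prod x_i^{\nu_i})\mathcal{U}^\alpha$ and the denominator $\mathcal{F}_z^\beta$ respectively. Convergence of~\eqref{Eq:FeynmParamInt} thus reduces to integrability of this tropical integrand over the probability simplex $\Delta^{n-1} \cong \PP^{n-1}_{>0}$. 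I would then resolve the boundary behavior by passing to a smooth projective toric variety $Y$ whose fan refines the common refinement of the normal fans of $\Newt(\mathcal{U})$ and of $\Newt(\mathcal{F}_z)$, equipped with a proper birational map $Y \to \PP^{n-1}$. In each affine chart of $Y$, the $\max$ functions become linear in the logarithms of local coordinates and the tropical integrand reduces to a single monomial $\prod_i y_i^{\gamma_i}$, whose exponent along the ray $\rho_i$ is $\gamma_i = \min_{a' \in S_{\mathrm{den}}}(\rho_i \cdot a') - \min_{a \in S_{\mathrm{num}}}(\rho_i \cdot a)$. Condition~(ii), namely $\Newt(\mathrm{num}) \subseteq \relint \Newt(\mathrm{den})$, is exactly the statement $\gamma_i > -1$ for every ray, which yields local integrability by Fubini; a partition of unity on the compact $Y$ concludes.

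The main obstacle is the clean bookkeeping of signs and non-integer exponents in the reduction to the tropical integrand: when $\beta < 0$ or $\alpha < 0$, one must invert the monomial estimates correctly, and one must justify the identity $\Newt(p^\gamma) = \gamma\,\Newt(p)$ for positive functions $p$ and real $\gamma$ with the appropriate sign conventions on the Newton polytope. A secondary subtlety is ensuring that the equivalence $\mathcal{F}_z \asymp \max_a x^a$ holds uniformly up to the boundary strata after pullback to $Y$, which uses strict positivity of $\mathcal{F}_z$ on all of $(Y_{\hat{A}})_{\geq 0} \setminus \{0\}$ rather than merely on the open cone. Once these points are settled, Theorem~\ref{Thm:SparsePolya} supplies the only nontrivial analytic input, and the rest of the argument is essentially toric power counting.
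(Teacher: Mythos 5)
You should first note that the paper does not prove this statement at all: it is imported verbatim as \cite[Theorem 3]{Borinsky}, so there is no internal proof to compare against. Your sketch is therefore an independent reconstruction, and it follows the standard route behind that cited result (tropical approximation of the Symanzik polynomials plus toric/sector power counting, as in the Euler--Mellin integral literature). The skeleton is sound: strict $A$-copositivity does give the two-sided bound $\mathcal{F}_z \asymp \sum_{a\in A}x^a \asymp \max_a x^a$ on $\RR^n_{>0}$ with global constants (so your worry about uniformity near boundary strata is moot — the torus orbit has full measure), $\mathcal{U}$ is trivially comparable to its tropical maximum, $\asymp$ is stable under fixed real powers so the signs of $\alpha,\beta$ are harmless, and reducing the relint condition (ii) to strict support-function inequalities on the rays of a common smooth refinement is correct. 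One remark: invoking Theorem~\ref{Thm:SparsePolya} here is heavier than necessary; the quotient $\mathcal{F}_z/\sum_{a\in A}x^a$ is a degree-zero continuous function on the compact set $(X_A)_{\geq0}$, and strict $A$-copositivity (via Proposition~\ref{Prop:InteriorOfCA_bis}) already gives the positive lower bound by compactness, with no Pólya multiplier needed.

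There is, however, a concrete bookkeeping error in the power-counting step as written. In a chart of the smooth toric resolution the measure is the torus-invariant form, which in local coordinates is $\prod_i dy_i/y_i$, not Lebesgue measure; so local integrability near $y_i=0$ requires the integrand's exponent along each ray to be strictly positive, i.e. $\min_{a\in S_{\mathrm{num}}}\langle\rho_i,a\rangle-\min_{b\in S_{\mathrm{den}}}\langle\rho_i,b\rangle>0$, which is what matches $\Newt(\mathrm{num})\subseteq\relint\Newt(\mathrm{den})$. Your $\gamma_i$ is defined with the opposite sign (denominator minimum minus numerator minimum), and the threshold $\gamma_i>-1$ belongs to the Lebesgue normalization after absorbing the Jacobian factor $\prod_i y_i^{-1}$; as literally stated, the claimed equivalence ``(ii) $\Leftrightarrow$ $\gamma_i>-1$ for every ray'' is false. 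The fix is mechanical (either switch to the dlog measure and require the correctly signed exponent to be positive, or work with Lebesgue measure and include the $-1$ shift coming from the Jacobian), and you should also record the elementary fact that positivity of the piecewise-linear function $\min_{a}\langle u,a\rangle-\min_{b}\langle u,b\rangle$ on the rays of the refining fan implies positivity for all $u\neq 0$ modulo the lineality direction $\mathds{1}$, which is what makes the ray-wise check equivalent to (ii). With these repairs your argument goes through and is essentially the proof of the cited theorem.
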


Condition (ii) in Theorem~\ref{Prop:ConvergInER} can be achieved by choosing $D,\nu_1,\dots,\nu_n$ appropriately. In this section, we focus on certifying condition (i).

We recall the construction of the polynomials $\mathcal{U}(x)$ and $\mathcal{F}(x)$.
Let $G$ be a connected graph. The letter $\ell$ in \eqref{Eq:FeynmParamInt} denotes the number of independent cycles in $G$, that is,
\[ \ell = \# \text{ edges of $G$} \; - \; \# \text{ vertices of $G$} \; + \; 1. \]
An edge of $G$ is called an \emph{external edge} if one of its vertices is attached to no other edge of $G$. Otherwise, we call an edge an \emph{internal edge}.
For example, the \emph{banana diagram}
\begin{equation}
\begin{aligned}
\label{Eq:Banana}
\begin{tikzpicture}[scale=0.3]
    \node[] (P4) at (4.6, 2) {\footnotesize $p_4$};
     \node[] (P3) at (4.6, -2) {\footnotesize $p_3$};
    \node[] (P1) at (-4.6, 2) {\footnotesize $p_1$};
     \node[] (P2) at (-4.6, -2) {\footnotesize $p_2$};
    \node[fill={black}, circle, inner sep=1.5pt]
    (A) at (2.5, 0) {};
     \node[fill={black}, circle, inner sep=1.5pt] (B) at (-2.5, 0) {};
     \node[fill=lightgray, circle, inner sep=1.5pt] (A1) at (5.5, 1.8) {};
      \node[fill=lightgray, circle, inner sep=1.5pt]  (A2) at (5.5, -1.8) {};
    \node[fill=lightgray, circle, inner sep=1.5pt] (B1) at (-5.5, 1.8) {};
      \node[fill=lightgray, circle, inner sep=1.5pt] (B2) at (-5.5, -1.8) {};
     \draw[bend left=70] (A) to (B) node[midway, above=-45 pt] {\Huge$m_3$};
      \draw[] (A) to (B) node[midway, above=2 pt]  {\Huge$m_2$};
     \draw[bend right=70] (A) to (B) node[midway, above=45 pt]  {\Huge$m_1$};
    \draw (A) -- (A1);
    \draw (A) -- (A2);
       \draw (B) -- (B1);
    \draw (B) -- (B2);
\end{tikzpicture}
\end{aligned}
\end{equation}

has $4$ external and $3$ internal edges and $\ell = 2$. In general, we denote by $N$ the number of external edges, and by $n$ the number of internal edges of a graph $G$. Each external edge is assigned a \emph{momentum vector} $p_i \in \mathbb{R}^D$, while the internal edges carry kinematic parameters $m_1,\dots,m_n$, called \emph{internal masses}, together with variables $x_1,\dots,x_n$. The \emph{first Symanzik polynomial} is defined as
\begin{align*}
    \mathcal{U}(x) \,\,:=\,\, \sum_{T \in \mathcal{T}} \prod_{e \notin T} x_e,
\end{align*}
where the sum is over the set of all spanning trees $\mathcal{T}$ of the graph $G$. These are connected subgraphs without
cycles that contain all vertices of $G$. The \emph{second Symanzik polynomial} equals
\begin{align}
\label{Eq:F}
    \mathcal{F}(x)\,\, := \sum_{\{T_1,T_2\} \in \mathcal{W}}\!\biggl( \,\sum_{i \in I_{T_1}} \sum_{j \in I_{T_2}} k_{ij} 
    \biggr) \!\! \prod_{e \notin T_1 \sqcup T_2} \!\! \!x_e \,
    \,+ \,\,\biggl(\,\sum_{e=1}^n m_e x_e \biggr)\, \mathcal{U}(x).
\end{align}
The first sum is over the set of all \emph{spanning 2-forests}, that is, each $\{T_1,T_2\} \in \mathcal{W}$ consists of two disjoint connected subgraphs of $G$ that contain no cycles, and together $T_1 \cup T_2$ covers all vertices of $G$. The sets $I_{T_1}, I_{T_2} \subseteq [N]$ record the indices of the external edges belonging to $T_1$ and $T_2$, respectively.
Moreover, $k_{ij}$ denotes the Minkowski scalar product of the momentum vectors $p_i, \, p_j$, that is, $k_{ij} = p_{i1}p_{j1} -  p_{i2}p_{j2}-\dots -  p_{iD}p_{jD}$. The second Symanzik polynomial is homogeneous of degree $\ell+1$, but each variable appears with exponent at most $2$. Thus, these polynomials are sparse, as many monomials of degree $\ell+1$ are absent.
\begin{example}
The banana diagram \eqref{Eq:Banana} has three spanning trees and one spanning $2$-forest
\begin{center}
\begin{minipage}{0.22\textwidth}
\centering
\begin{tikzpicture}[scale=0.3]
    \node[] (P4) at (4.6, 2) {\footnotesize $p_4$};
     \node[] (P3) at (4.6, -2) {\footnotesize $p_3$};
    \node[] (P1) at (-4.6, 2) {\footnotesize $p_1$};
     \node[] (P2) at (-4.6, -2) {\footnotesize $p_2$};
    \node[fill={black}, circle, inner sep=1.5pt]
    (A) at (2.5, 0) {};
     \node[fill={black}, circle, inner sep=1.5pt] (B) at (-2.5, 0) {};
     \node[fill=lightgray, circle, inner sep=1.5pt] (A1) at (5.5, 1.8) {};
      \node[fill=lightgray, circle, inner sep=1.5pt]  (A2) at (5.5, -1.8) {};
    \node[fill=lightgray, circle, inner sep=1.5pt] (B1) at (-5.5, 1.8) {};
      \node[fill=lightgray, circle, inner sep=1.5pt] (B2) at (-5.5, -1.8) {};
     \draw[bend right=70] (A) to (B) node[midway, above=45 pt]  {\Huge$m_1$};
    \draw (A) -- (A1);
    \draw (A) -- (A2);
       \draw (B) -- (B1);
    \draw (B) -- (B2);
\end{tikzpicture}
\end{minipage}
\hspace{8pt}
\begin{minipage}{0.22\textwidth}
\centering
\begin{tikzpicture}[scale=0.3]
    \node[] (P4) at (4.6, 2) {\footnotesize $p_4$};
     \node[] (P3) at (4.6, -2) {\footnotesize $p_3$};
    \node[] (P1) at (-4.6, 2) {\footnotesize $p_1$};
     \node[] (P2) at (-4.6, -2) {\footnotesize $p_2$};
    \node[fill={black}, circle, inner sep=1.5pt]
    (A) at (2.5, 0) {};
     \node[fill={black}, circle, inner sep=1.5pt] (B) at (-2.5, 0) {};
     \node[fill=lightgray, circle, inner sep=1.5pt] (A1) at (5.5, 1.8) {};
      \node[fill=lightgray, circle, inner sep=1.5pt]  (A2) at (5.5, -1.8) {};
    \node[fill=lightgray, circle, inner sep=1.5pt] (B1) at (-5.5, 1.8) {};
      \node[fill=lightgray, circle, inner sep=1.5pt] (B2) at (-5.5, -1.8) {};
      \draw[] (A) to (B) node[midway, above=2 pt]  {\Huge$m_2$};
    \draw (A) -- (A1);
    \draw (A) -- (A2);
       \draw (B) -- (B1);
    \draw (B) -- (B2);
\end{tikzpicture}
\end{minipage}
\hspace{8pt}
\begin{minipage}{0.22\textwidth}
\centering
\begin{tikzpicture}[scale=0.3]
    \node[] (P4) at (4.6, 2) {\footnotesize $p_4$};
     \node[] (P3) at (4.6, -2) {\footnotesize $p_3$};
    \node[] (P1) at (-4.6, 2) {\footnotesize $p_1$};
     \node[] (P2) at (-4.6, -2) {\footnotesize $p_2$};
    \node[fill={black}, circle, inner sep=1.5pt]
    (A) at (2.5, 0) {};
     \node[fill={black}, circle, inner sep=1.5pt] (B) at (-2.5, 0) {};
     \node[fill=lightgray, circle, inner sep=1.5pt] (A1) at (5.5, 1.8) {};
      \node[fill=lightgray, circle, inner sep=1.5pt]  (A2) at (5.5, -1.8) {};
    \node[fill=lightgray, circle, inner sep=1.5pt] (B1) at (-5.5, 1.8) {};
      \node[fill=lightgray, circle, inner sep=1.5pt] (B2) at (-5.5, -1.8) {};
     \draw[bend left=70] (A) to (B) node[midway, above=-45 pt] {\Huge$m_3$};
    \draw (A) -- (A1);
    \draw (A) -- (A2);
       \draw (B) -- (B1);
    \draw (B) -- (B2);
\end{tikzpicture}
\end{minipage}
\hspace{8pt}
\begin{minipage}{0.22\textwidth}
\centering
\begin{tikzpicture}[scale=0.3]
    \node[] (P4) at (4.6, 2) {\footnotesize $p_4$};
     \node[] (P3) at (4.6, -2) {\footnotesize $p_3$};
    \node[] (P1) at (-4.6, 2) {\footnotesize $p_1$};
     \node[] (P2) at (-4.6, -2) {\footnotesize $p_2$};
    \node[fill={black}, circle, inner sep=1.5pt]
    (A) at (2.5, 0) {};
     \node[fill={black}, circle, inner sep=1.5pt] (B) at (-2.5, 0) {};
     \node[fill=lightgray, circle, inner sep=1.5pt] (A1) at (5.5, 1.8) {};
      \node[fill=lightgray, circle, inner sep=1.5pt]  (A2) at (5.5, -1.8) {};
    \node[fill=lightgray, circle, inner sep=1.5pt] (B1) at (-5.5, 1.8) {};
      \node[fill=lightgray, circle, inner sep=1.5pt] (B2) at (-5.5, -1.8) {};
    \draw (A) -- (A1);
    \draw (A) -- (A2);
       \draw (B) -- (B1);
    \draw (B) -- (B2);
\end{tikzpicture}
\end{minipage}
     \end{center}
     The first and second Symanzik polynomials for the banana diagram are
     \begin{equation}
     \label{Eq:BananaF}
     \begin{aligned}
        \mathcal{U}(x) &= x_2x_3 + x_1x_3 + x_1x_2 \\
        \mathcal{F}(x) &=   (k_{13} + k_{14} +k_{23} + k_{24}) x_1x_2x_3 + (m_1x_1 + m_2x_2 + m_3x_3)(x_2x_3 + x_1x_3 + x_1x_2 ) \\
         &=(k_{13} + k_{14} +k_{23} + k_{24} + m_1 + m_2 + m_3) x_1x_2x_3 \\
         &+ m_2 x_2^2 x_3  + m_3 x_2 x_3^2 + m_1 x_1^2x_2 + m_1 x_1^2x_3  + m_2 x_1x_2^2 + m_3 x_1x_3^2.
     \end{aligned}
     \end{equation}
   The Newton polytope of $\mathcal{F}$ lives in the ambient space $\mathbb{R}^3$. However, as $\mathcal{F}$ is homogeneous, its Newton polytope lives in the affine hyperplane $x_1 + x_2 + x_3 = 3$, and it has dimension two. For an illustration, we refer to Figure~\ref{FIG1}(a).  
\end{example}

\begin{figure}[t]
\centering
\begin{minipage}[h]{0.34\textwidth}
\centering
\includegraphics[scale=0.4]{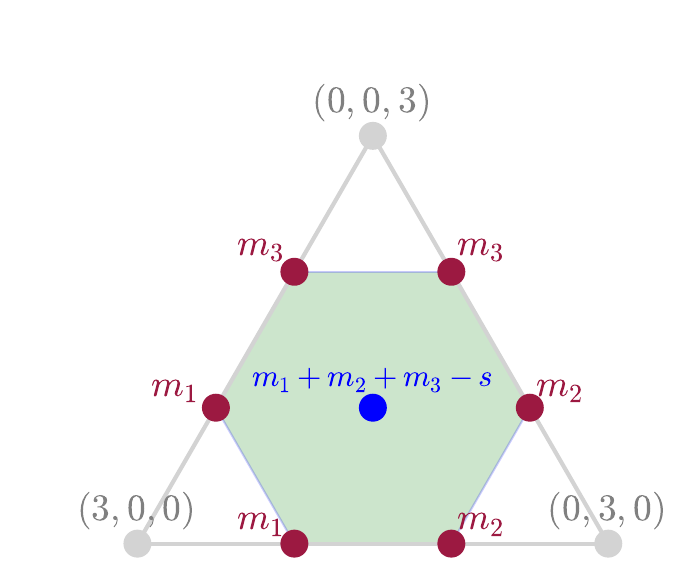}

{\small (a)}
\end{minipage}
\hspace{10 pt}
\begin{minipage}[h]{0.4\textwidth}
\centering
\includegraphics[scale=0.4]{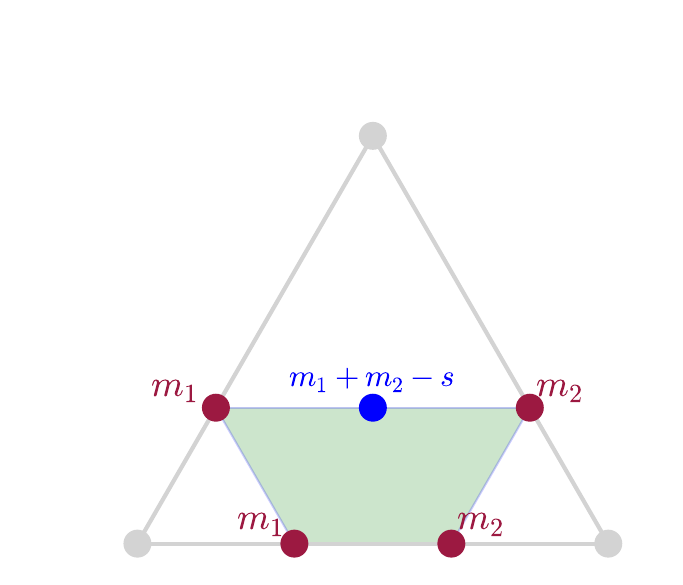}

{\small (b)}
\end{minipage}
\caption{{\small  (a) Newton polytope of the second Symanzik polynomial $\mathcal{F}$ of the banana diagram \eqref{Eq:BananaF}, $s = -(k_{13} + k_{14} +k_{23} + k_{24})$.  (b) depicts the Newton polytope of $\mathcal{F}$ after setting $m_3 = 0$.}}\label{FIG1}
\end{figure}

In the physics context, the coefficients of $\mathcal{F}$ may satisfy certain linear relations. For example, momentum conservation implies that $k_{i1} + \dots + k_{iN} = 0$ for each $i \in [N]$. Another common assumption is that some of the internal masses are zero. To accommodate these scenarios, we consider a family of polynomials with the following parametrization:
\begin{align}
\label{Eq:Fzm}
    \mathcal{F}_{z,m}(x)\,\, := \sum_{\{T_1,T_2\} \in \mathcal{W}} L_{T_1,T_2}(z) \!\! \prod_{e \notin T_1 \sqcup T_2} \!\! \!x_e \,
    \,+ \,\,\biggl(\,\sum_{e\in\mathcal{E}}m_e x_e \biggr)\, \mathcal{U}(x),
\end{align}
where $L\colon \mathbb{R}^K \to \mathbb{R}^{\mathcal{W}}$, and $\mathcal{E} \subseteq [n]$.
Let $A_G
\subseteq \mathbb{Z}^n$ be the support of $ \mathcal{F}_{z,m}(x)$ for generic values of $(z,m) \in \mathbb{R}^K \times \mathbb{R}^{\mathcal{E}}$.
Because of the linear relations among the coefficients, the set of polynomials 
$\{\, \mathcal{F}_{z,m}(x) \, | \, (z,m) \in \mathbb{R}^K \times \mathbb{R}^{\mathcal{E}} \, \}$ 
forms a linear subspace of $\mathbb{R}[x_1^\pm,\dots,x_n^\pm]_{A_G}$.
We call this subspace the \emph{kinematic space} 
$\mathcal{K}_G$.

In \cite[Section 3.1]{HennRaman}, the intersection $\mathcal{K}_G \cap \interior(\mathcal{C}_{A_G})$ is referred to as the \emph{Euclidean region} of the Feynman diagram $G$. Its relevance in physics comes from the fact that if the second Symanzik polynomial lies in  $\mathcal{K}_G \cap \interior(\mathcal{C}_{A_G})$, then the convergence of the Feynman integral \eqref{Eq:FeynmParamInt} can be ensured using Theorem~\ref{Prop:ConvergInER}. In the remainder of this section, we provide an easily checkable necessary and sufficient condition for the Euclidean region to be non-empty (Proposition~\ref{Prop:IntersectCoposCone}).

To that end, note that support set $A_G$ is contained in the union of the following two sets
\begin{align}
\label{Eq:supportFz}
 A^1_G = \Big\{ \,e_p \,+\!\! \sum_{j\in [n] \backslash T } \!e_j \,\mid \,T \in \mathcal{T} ,\,p \in T \Big\},
\quad 
    A^2_G\, =\, \Big\{ 2e_q \,+\!\!\! \sum_{j\in [n]\backslash (T \cup \{q\})}\!\!\!
    \!\! e_j \,\mid\, T \in \mathcal{T} ,\; q \not\in T \Big\}.
 \end{align}
The next technical lemma will be used in the proof of Proposition~\ref{Prop:IntersectCoposCone}.
\begin{lemma}
\label{Lemma:NotAVertex}
Let $A_G \subseteq A_G^1 \cup A_G^2$ be the support of $\mathcal{F}_{z,m}$ in \eqref{Eq:F} for generic $(z,m)$.
If for $a \in A_G \cap A_G^1$ the coefficient of $x^a$ does not involve any $m_e$'s, then $a$ is a vertex of $\conv(A_G)$.
\end{lemma}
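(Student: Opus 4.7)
The plan is to exhibit a supporting hyperplane of $\conv(A_G)$ whose intersection with the polytope contains only $a$. Writing $a = e_p + \sum_{j \notin T} e_j$ with $T$ a spanning tree and $p \in T$, first set $T' \coloneqq T \setminus \{p\}$, a spanning $2$-forest of $G$; a direct inspection then gives $a = \mathds{1}_{[n] \setminus T'}$. Consider the linear functional $c \coloneqq \mathds{1}_{[n] \setminus T'}$. Because $\mathcal{F}_{z,m}$ is homogeneous of degree $\ell + 1$, every $b \in A_G^1 \cup A_G^2$ satisfies $\sum_j b_j = \ell + 1$, so $\langle c, b \rangle \le \ell + 1$ with equality if and only if $\supp(b) \subseteq [n] \setminus T'$. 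Hence $a$ is a vertex of $\conv(A_G)$ if and only if it is an extreme point of the face $F = \conv(S)$, where $S \coloneqq \{\, b \in A_G \colon \supp(b) \subseteq [n] \setminus T'\,\}$.

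The next step is to enumerate $S$. Any element of $S \cap A_G^1$ is the indicator of a spanning $2$-forest $T'' \supseteq T'$, and cardinality forces $T'' = T'$, so $S \cap A_G^1 = \{a\}$. Any element of $S \cap A_G^2$ comes from a spanning tree $T'' = T' \cup \{e\}$ with $e \in E(T')$, where $E(T')$ denotes the set of edges completing $T'$ to a spanning tree, together with a choice of $q \in [n] \setminus T' \setminus \{e\}$; the corresponding vector is $v_{e,q} \coloneqq \mathds{1}_{[n] \setminus T'} - e_e + e_q$, and it lies in $A_G$ precisely when its coefficient $m_q$ in $\mathcal{F}_{z,m}$ is generically nonzero, i.e.\ when $q \in \mathcal{E}$. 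Expanding \eqref{Eq:Fzm} shows that the coefficient of $x^a$ itself is $L_{T_1, T_2}(z) + \sum_{e \in E(T') \cap \mathcal{E}} m_e$, where $\{T_1, T_2\}$ is the partition of $T'$ into its two connected components. The hypothesis on $a$ therefore translates to the combinatorial statement $E(T') \cap \mathcal{E} = \emptyset$: every edge completing $T'$ to a spanning tree is massless.

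Finally, suppose for contradiction that $a = \sum_i \lambda_i v_{e_i, q_i}$ with $\lambda_i > 0$, $\sum_i \lambda_i = 1$, and each $v_{e_i, q_i} \in S \cap A_G^2$. Cancelling the common term $\mathds{1}_{[n] \setminus T'}$ yields $\sum_i \lambda_i e_{e_i} = \sum_i \lambda_i e_{q_i}$. By the translated hypothesis the left-hand side is supported in $E(T') \subseteq [n] \setminus \mathcal{E}$, while the right-hand side is supported in $\mathcal{E}$; disjointness of the supports forces both sides to vanish, which is incompatible with $\sum_i \lambda_i = 1$. Hence $a$ is extreme in $F$ and therefore a vertex of $\conv(A_G)$. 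The main obstacle is not the extreme-point argument itself but the careful bookkeeping of which monomials in $A_G^1 \cup A_G^2$ actually lie in $A_G$ for the massless polynomial \eqref{Eq:Fzm}; once each coefficient is written explicitly in terms of $L_{T_1,T_2}(z)$ and the retained masses $\{m_e : e \in \mathcal{E}\}$, the rest reduces to an elementary support-disjointness computation.
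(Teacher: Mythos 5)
Your proof is correct, and its first half coincides in substance with the paper's argument: both reduce to the observation that any representation of $a$ as a convex combination of points of $A_G$ can only use points supported inside $\supp(a)=[n]\setminus T'$, and that by the degree count every such point other than $a$ lies in $A_G^2$; your supporting functional $c=\mathds{1}_{[n]\setminus T'}$ and the face $F=\conv(S)$ are a clean repackaging of this step. The two proofs part ways at the finish. The paper argues by contraposition and, from a single summand $b_1\in A_G^2$, takes the unique edge $e$ with $\supp(a)=\{e\}\cup\supp(b_1)$ and asserts $m_e\neq 0$, concluding that the coefficient of $x^a$ involves $m_e$; but membership $b_1\in A_G$ only forces the doubled coordinate $q$ of $b_1$ to lie in $\mathcal{E}$, not the missing edge $e$, so that assertion is not justified as written. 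Your balancing argument supplies exactly the missing ingredient: writing each summand as $\mathds{1}_{[n]\setminus T'}-e_{e_i}+e_{q_i}$ with $e_i$ an edge completing $T'$ to a spanning tree and $q_i\in\mathcal{E}$, cancellation gives $\sum_i\lambda_i e_{e_i}=\sum_i\lambda_i e_{q_i}$, and under your (correct) translation of the hypothesis --- no edge completing $T'$ to a spanning tree lies in $\mathcal{E}$ --- the two sides have disjoint supports, a contradiction; equivalently, in the contrapositive this identity forces some $e_i$ to equal some $q_j\in\mathcal{E}$, which is precisely what the paper's ``$m_e\neq 0$'' requires. Your explicit formula for the coefficient of $x^a$ (the kinematic term plus the sum of $m_e$ over massive edges completing $T'$) also makes the hypothesis concrete, which the paper leaves implicit. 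In short: same skeleton, but your finishing step is more careful and in fact repairs the one unjustified step in the paper's own write-up, at the modest cost of enumerating the face explicitly.
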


\begin{proof}
Since $a \in A^1_G$, there exists a spanning tree $T$ of $G$ and $p \in T$ such that  $a = e_p +  \sum_{j\in [n] \backslash T } \!e_j$. 
We prove the statement by contraposition.
Assume that $a$ is not a vertex of $\conv(A_G)$. Then there exists $b_1,\dots,b_k \in A_G \setminus \{a\}$ and $\lambda_1,\dots,\lambda_k > 0$ such that $a = \sum_{i=1}^k\lambda_i b_i$ and $\sum_{i=1}^k\lambda_i =1$. 

For a vector $v \in \mathbb{R}^n$, we write $\supp(v) = \{ i \in [n] \mid v_i \neq 0 \}$. Since the entries of $b_1, \dots, b_k$ are nonnegative, we have $\supp(b_i) \subseteq \supp(a) = \{p\}\cup ([n] \setminus T) $ for each $i \in [k]$. 
Since $b_1 \neq a$ and the sum of the coordinates of $b_1$ is equal to $\ell +1 =\# (\{p\} \cup [n]\setminus T)$, there exists $e \in \{p\}\cup ([n] \setminus T)$ such that $e \notin \supp(b_1)$, and $b_1 \in A_{G}^2 \cap A_G$.
This implies that there exists a spanning tree $T_1$ such that $\supp(b_1) = [n] \setminus T_1$.
Thus, we have $e \in T_1$, $\supp(a) = \{e \} \cup ([n] \setminus T_1)$ and $m_e \neq 0$.
Using \eqref{Eq:F}, we conclude that the coefficient of $x^a$ involves $m_{e}$, which concludes the proof.
\end{proof}

\begin{proposition}
\label{Prop:IntersectCoposCone}
Let $G$ be a Feynman diagram and let $A_G$ be the support of $\mathcal{F}_{z,m}$ in \eqref{Eq:F} for generic $(z,m) \in \RR^K \times \RR^{\mathcal{E}}$.
The Euclidean region $\mathcal{K}_G \cap \interior \mathcal{C}_{A_G}$ is nonempty if and only if $\mathcal{K}_G \cap \mathbb{R}_{> 0}[x_1^\pm, \dots,x_n^\pm]_{A_G} \neq  \emptyset$.
\end{proposition}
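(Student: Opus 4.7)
\emph{Plan for the proof.} The ``$\Leftarrow$'' direction is immediate from Theorem~\ref{Thm:SparsePolya}: any $g \in \mathcal{K}_G$ with strictly positive coefficients supported on all of $A_G$ satisfies $\Newt(g) = \conv(A_G)$ and already has nonnegative coefficients, so taking the trivial multiplier $N=0$ in part (iii) of that theorem shows that $g$ is strictly $A_G$-copositive, hence $g \in \mathcal{K}_G \cap \interior \mathcal{C}_{A_G}$.

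For the ``$\Rightarrow$'' direction I would proceed by a perturbation in the mass parameters. Fix any $f_0 = \mathcal{F}_{z_0, m_0} \in \mathcal{K}_G \cap \interior \mathcal{C}_{A_G}$ and, for $M \geq 0$, set $f_M \coloneqq \mathcal{F}_{z_0,\, m_0 + M \mathbf{1}_{\mathcal{E}}}$, where $\mathbf{1}_{\mathcal{E}}$ denotes the all-ones vector in $\RR^{\mathcal{E}}$. By construction $f_M \in \mathcal{K}_G$, so it suffices to show that for $M$ large enough every coefficient of $f_M$ is strictly positive. Inspecting \eqref{Eq:Fzm}, the coefficient $c_a(z,m)$ of $x^a$ is either $m_q$ (when $a \in A_G^2$, writing $a = 2 e_q + \sum_{j \notin T \cup \{q\}} e_j$) or of the form $L_a(z) + \sum_{e \in S_a} m_e$ for some $S_a \subseteq \mathcal{E}$ and some linear $L_a$ (when $a \in A_G^1$). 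In either case $c_a(z_0,\, m_0 + M\mathbf{1}_{\mathcal{E}}) = c_a(z_0, m_0) + M N_a$, with $N_a \in \{0, 1, |S_a|\}$, depends affinely on $M$.

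The remaining verification is a short case analysis on $a \in A_G$. If $a$ is a vertex of $\conv(A_G)$, then Proposition~\ref{Prop:InteriorOfCA_bis} applied to the face $\{a\}$ gives $c_a(z_0, m_0) > 0$, since the truncation $f_0^{\{a\}} = c_a(z_0, m_0)\, x^a$ must be strictly positive on $\RR^n_{>0}$; adding $M N_a \geq 0$ preserves positivity for all $M \geq 0$. If $a \in A_G$ is not a vertex, then either $a \in A_G^2$ (so $N_a = 1$), or $a \in A_G^1$, in which case Lemma~\ref{Lemma:NotAVertex} forces $S_a \neq \emptyset$ and hence $N_a \geq 1$; in both non-vertex cases $c_a(z_0, m_0) + M N_a$ grows linearly in $M$ and becomes positive for $M$ sufficiently large. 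Choosing $M$ to exceed the finitely many thresholds $-c_a(z_0, m_0)/N_a$ arising from non-vertex exponents produces $f_M \in \mathcal{K}_G \cap \RR_{>0}[x_1^\pm, \dots, x_n^\pm]_{A_G}$.

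The main obstacle is ensuring, for every non-vertex exponent $a$, that the corresponding coefficient actually involves one of the parameters we are allowed to tune — this is exactly the content of Lemma~\ref{Lemma:NotAVertex}. Without that lemma, the argument would fail for any non-vertex monomial whose coefficient happened to be a pure function of $z$: increasing masses could not help, and one would have to exploit the freedom in $z$ instead, which is considerably more delicate since the linear map $L\colon \RR^K \to \RR^{\mathcal{W}}$ may have restricted image.
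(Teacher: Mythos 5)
Your proof is correct and takes essentially the same route as the paper: the easy inclusion $\RR_{>0}[x_1^\pm,\dots,x_n^\pm]_{A_G}\subseteq \interior \mathcal{C}_{A_G}$ for one direction, and for the other the positivity of all vertex coefficients combined with Lemma~\ref{Lemma:NotAVertex} (plus the observation that coefficients of exponents in $A_G^2$ are pure masses) to make every coefficient positive by increasing masses while staying in $\mathcal{K}_G$. Your uniform shift $m_0+M\mathbf{1}_{\mathcal{E}}$ and the affine-in-$M$ bookkeeping simply make explicit the paper's ``choose $m_e$ sufficiently large'' step.
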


\begin{proof}
The only if part is obvious, since $\mathbb{R}_{>0}[x_1^\pm, \dots,x_n^\pm]_{A_G} \subseteq \interior(\mathcal{C}_{A_G})$.
To show the other implication, let $\mathcal{F}_{z,m} \in \mathcal{K}_G \cap \interior \mathcal{C}_{A_G}$.
If $a \in A_G$ is a vertex of $\conv(A_G)$, then the coefficient of the monomial $x^a$ must be positive as $\mathcal{F}_{z,m} \in \interior (\mathcal{C}_{A_G})$.
If $a \in A_G$ is not a vertex, then by Lemma~\ref{Lemma:NotAVertex} the coefficient of $x^a$ involves at least one $m_e$. By choosing $m_e$ sufficiently large, the coefficient of $x^a$ becomes positive, which completes the proof.
\end{proof}

We conclude this section with two examples. We begin by showing that the Euclidean region might be empty, which is known to happen for non-planar diagrams~\cite{HennSmirnovSmirnov,HennRaman}.
\begin{example}
    Consider the non-planar double box diagram
   \begin{equation*}
    \begin{aligned}
        \begin{tikzpicture}[scale=0.6]
    \node[fill=black, circle, inner sep=1.5pt] (C) at (0, 1) {};
    \node[fill=black, circle, inner sep=1.5pt] (D) at (0, -1) {};
        \node[fill=black, circle, inner sep=1.5pt] (E) at (2, 1) {};
    \node[fill=black, circle, inner sep=1.5pt] (F) at (2, -1) {};
        \node[fill=black, circle, inner sep=1.5pt] (A) at (-2, 1) {};
    \node[fill=black, circle, inner sep=1.5pt] (B) at (-2, -1) {};
    \node[fill=gray, circle, inner sep=1.5pt] (U) at (3, 1.7) {};
    \node[fill=gray, circle, inner sep=1.5pt] (V) at (-3, 1.7) {};
    \node[fill=gray, circle, inner sep=1.5pt] (W) at (3, -1.7) {};
    \node[fill=gray, circle, inner sep=1.5pt] (T) at (-3, -1.7) {};

        \node[] (P) at (1, 0) {};
    \node[] (Q) at (1, 0) {};
  
    \draw (A) -- (B);
    \draw (A) -- (C);
    \draw (B) -- (D);
    \draw (C) -- (F);
    \draw (P) -- (D);
    \draw (E) -- (Q);
    \draw (C) -- (E);
    \draw (D) -- (F);
    \draw (E) -- (U);
    \draw (A) -- (V);
    \draw (F) -- (W);
    \draw (T) -- (B);

    \node[] (m4) at (0.2, 0.25) {\footnotesize $m_4$};
    \node[] (m5) at (1.85, 0.25) {\footnotesize $m_7$};
    \node[] (m1) at (-2.4, 0) {\footnotesize $m_2$};
    \node[] (m_2) at (-1, -1.25) {\footnotesize $m_1$};
    \node[] (m3) at (-1, 1.25) {\footnotesize $m_3$};
    \node[] (m6) at (1, -1.25) {\footnotesize $m_6$};
    \node[] (m7) at (1, 1.25) {\footnotesize $m_5$};
    \node[] (P1) at (2.6, 1.8) {\footnotesize $p_3$};
    \node[] (P1) at (-2.6, 1.8) {\footnotesize $p_2$};
    \node[] (P1) at (2.6, -1.8) {\footnotesize $p_4$};
    \node[] (P1) at (-2.6, -1.8) {\footnotesize $p_1$};
\end{tikzpicture}
    \end{aligned}
    \end{equation*}
    with all external masses set to zero, i.e., $k_{11} = k_{22} = k_{33} = k_{44} = 0$ (cf. \eqref{Eq:F}). In addition, we set the internal masses $m_3, m_4, m_5, m_6, m_7$  to zero as well. Under the assumption of momentum conservation, the second Symanzik polynomial is given by
    \begin{align*}
        &\mathcal{F}_{z,m}(x) = m_1x_1^2x_6 + m_1x_1^2x_7 + m_1x_4x_1^2 + m_1x_5x_1^2 + m_2x_2^2x_6 + m_2x_2^2x_7 + m_2x_4x_2^2 \\ 
        &+ m_2x_5x_2^2 + m_1x_1x_6x_7 + (m_1 + m_2)x_2x_1x_6 + (m_1 + m_2)x_2x_1x_7     + (m_1 - s)x_3x_1x_6 + 
          (m_1-s)x_3x_1x_7    \\
        &+ m_1x_4x_1x_7  + (m_1+m_2)x_4x_2x_1 + (m_1 -s)x_4x_3x_1 + (m_1-s)x_4x_5x_1 + m_1x_5x_1x_6 + (m_1 + m_2) x_5x_2x_1 \\
        &+ (m_1 - s)x_5x_3x_1     
        + m_2x_2x_3x_6 
        + m_2x_2x_3x_7 
        + m_2x_2x_6x_7 
        + m_2x_4x_2x_3 
        + (m_2 - t)x_4x_2x_7 \\
        &+ m_2x_4x_5x_2 
      + m_2x_5x_2x_3
      +( m_2 + s + t)x_5x_2x_6  - sx_3x_6x_7 
    \end{align*}
One easily checks that for $s < 0$ and $m_1, m_2 > 0$ sufficiently large $\mathcal{F}_{z,m}(x)$  has only positive coefficients. Thus, the kinematic space $\mathcal{K}_G$ intersects $\interior(\mathcal{C}_{A_G})$ and one can use Theorem~\ref{Thm:SparsePolya} to make containment in the Euclidean region $\mathcal{K}_G \cap \interior(\mathcal{C}_{A_G})$ manifest.

If we additionally set $m_2 = 0$, the situation changes. The monomials $x_2x_5x_6, \, x_3x_6x_7, \, x_2x_4x_7$ have coefficients $s+t,-s$ and $-t$ respectively. In this case, $\mathcal{F}_{z,m}(x)$ has both positive and negative coefficients for all values of $m_1,s,t$. 
Using Proposition~\ref{Prop:IntersectCoposCone}, we conclude that the corresponding Euclidean region is empty.
\end{example}

Both Theorem~\ref{Thm:SparsePolya} and \cite[Theorem 6.1]{SturmfelsTelek} provide a way to make containment in the Euclidean region manifest.
In the next example, we compare these two approaches.

\begin{example}
    The second Symanzik polynomial of the banana diagram \eqref{Eq:Banana} is given by
    \begin{align*}
         \mathcal{F}_{z,m}(x) =(m_1 + m_2 + m_3 - s) x_1x_2x_3 + m_2 x_2^2 x_3  + m_3 x_2 x_3^2 + m_1 x_1^2x_2 + m_1 x_1^2x_3  + m_2 x_1x_2^2 + m_3 x_1x_3^2.
    \end{align*}
   Here, we set  $k_{13} + k_{14} +k_{23} + k_{24} = -s$, cf. \eqref{Eq:BananaF}. If all internal masses are non-zero, both Theorem~\ref{Thm:SparsePolya} and \cite[Theorem 6.1]{SturmfelsTelek} can be used to check containment in the Euclidean region. In this case, one verifies whether
   \begin{align}
   \label{Eq:SparseMethod}
       &(x_1x_2x_3 + x_2^2 x_3  + x_2 x_3^2 + x_1^2x_2 + x_1^2x_3  + x_1x_2^2 + x_1x_3^2)^N \mathcal{F}_{z,m}(x) \quad \text{or} \\
       \label{Eq:ZerosMethod}
        &(x_1 + x_2 + x_3)^N \mathcal{F}_{z,m}(x)
   \end{align}
has only positive coefficients for some $N \in \mathbb{N}$.
For example, when $m_1 = m_2 = m_3 = 1$ and $s= 8.97$ the smallest $N$ for which \eqref{Eq:SparseMethod} has only positive coefficients is $N=232$, while for \eqref{Eq:ZerosMethod} it is $N=596$. Although the exponent is much larger in the second case, the time required to compute the product is approximately the same, and the resulting polynomials have roughly the same number of monomials: $2 732 802$ for \eqref{Eq:SparseMethod} and $3 056 236$ for \eqref{Eq:ZerosMethod}.

We modify the example and assume that $m_3 = 0$. Now, the second Symanzik polynomial is 
    \begin{align*}
         \mathcal{F}_{z,m}(x) =(m_1 + m_2 - s) x_1x_2x_3 + m_2 x_2^2 x_3   + m_1 x_1^2x_2 + m_1 x_1^2x_3  + m_2 x_1x_2^2.
    \end{align*}
    Since the Newton polytope of $ \mathcal{F}_{z,m}(x)$ has changed (see Figure~\ref{FIG1}(b)), \cite[Theorem 6.1]{SturmfelsTelek} does not apply in this case. However, we can still use Theorem~\ref{Thm:SparsePolya} and check whether
     \begin{align*}
       ( x_1x_2x_3 + x_2^2 x_3   +  x_1^2x_2 + x_1^2x_3  + x_1x_2^2)^N  \mathcal{F}_{z,m}(x) 
    \end{align*}
    has positive coefficients for some $N \in \mathbb{N}$.
\end{example}

\printbibliography

@article{CastlePowersReznick,
title = {Pólya’s Theorem with zeros},
journal = {J. Symb. Comput.},
volume = {46},
number = {9},
pages = {1039-1048},
year = {2011},
author = {Castle, M. and Powers, V. and Reznick, B.}
}

@book{MaclaganSturmfels,
  title={Introduction to Tropical Geometry},
  author={Maclagan, D. and Sturmfels, B.},
  isbn={9780821851982},
  lccn={2014036141},
  series={Graduate Studies in Mathematics},
  year={2015},
  publisher={American Mathematical Society}
}

@article{Hilbert,
title = {Mathematical Problems},
journal = {Bull. Amer. Math. Soc.},
volume = {8},
pages = {437–479},
year = {1902},
author = {Hilbert, D.}
}

@article{Artin,
title = {Über die {Z}erlegung definiter {F}unktionen in {Q}uadrate},
journal = {Abh. Math. Semin. Univ. Hambg.},
volume = {5},
pages = {100–115},
year = {1927},
author = {Artin, E.}
}

@article{Motzkin,
      title={Copositive quadratic forms}, 
      author={Motzkin, T.~S.},
      year={1952},
      journal={National Bureau of Standards Report 1818},
      pages = {11-12}
}

@article{Polya,
 author = {P{\'o}lya, G.},
 title = {{\"U}ber positive {Darstellung} von {Polynomen}},
 year = {1928},
 journal = {Vierteljahrsschrift {Z{\"u}rich}},
 volume = {73},
 pages = {141-145}
}

@article{PowersReznick,
title = {A new bound for {P}ólya's Theorem with applications to polynomials positive on polyhedra},
journal = {J. Pure Appl. Algebra},
volume = {164},
number = {1},
pages = {221-229},
year = {2001},
author = {Powers, V. and Reznick, B.}
}

@article{BomzeDur,
title = {On Copositive Programming and Standard Quadratic Optimization Problems},
journal = {J. Glob. Optim.},
volume = {18},
pages = {301–320},
year = {2000},
author = {Bomze, I.~M. and Dür, M. and de~Klerk, E. and Roos, C. and Quist, A.~J. and Terlaky, T. },
}

@InProceedings{Dur,
author={D{\"u}r, M.},
editor={Diehl, M. and Glineur, F. and Jarlebring, E. and Michiels, W.},
title={Copositive Programming -- a Survey},
booktitle={Recent Advances in Optimization and its Applications in Engineering},
year={2010},
pages={3--20}
}

@article{BodirskyKummerThom,
author = {Bodirsky, M. and Kummer, M. and Thom, A.},
year = {2024},
title = {Spectrahedral shadows and completely positive maps on real closed fields},
journal = {J. Eur. Math. Soc.},
author = {Bodirsky,  Manuel and Kummer,  Mario and Thom,  Andreas},
note = {published online first}
}

@article{deLoeraSantos,
title = {An effective version of {P}ólya's theorem on positive definite forms},
journal = {J. Pure Appl. Algebra.},
volume = {108},
number = {3},
pages = {231-240},
year = {1996},
issn = {0022-4049},
author = {{de Loera}, J.~A. and Santos, F.},
}

@article{MokTo,
author = {Mok, H.-N. and To, W.-K.},
year = {2008},
volume = {24},
number = {4},
pages = {524-544},
title = {Effective {P}ólya semi-positivity for non-negative polynomials on the simplex},
journal = {J. Complex.},
}

@book{Parrilo,
publisher = {Ph.D. thesis, California Institute of Technology, Pasadena,CA},
author = {Parrilo, P.~A.},
title = {Structured Semideﬁnite Programs and Semi-algebraic Geometry Methods in Robustness and Optimization},
year = {2000},
comment = {Available online at: https://www.mit.edu/~parrilo/pubs/files/thesis.pdf}
}

@book{Marshall,
AUTHOR = {Marshall, Murray},
     TITLE = {Positive polynomials and sums of squares},
    SERIES = {Mathematical Surveys and Monographs},
    VOLUME = {146},
 PUBLISHER = {American Mathematical Society, Providence, RI},
      YEAR = {2008},
}

@book{Vargas,
publisher = {Ph.D. thesis,  Tilburg University},
author = {Vargas, L.~F.},
title = {Sum-of-Squares Representations for Copositive Matrices and Independent Sets in Graphs},
year = {2023},
comment = {Available online at: https://pure.uvt.nl/ws/portalfiles/portal/81931069/Proefschrift_Luis_Vargas_digitaal.pdf}
}

@article{DresslerMurray,
  author    = {Dressler, M. and Murray, R.},
  title     = {Algebraic Perspectives on Signomial Optimization},
  journal   = {SIAM J. Appl. Algebra Geom.},
  volume    = {6},
  number    = {4},
  pages     = {650--684},
  year      = {2022},
  doi       = {10.1137/21M1462568},
  url       = {https://epubs.siam.org/doi/10.1137/21M1462568}
}

@article{Delzell,
title = {Impossibility of extending {P}ólya’s theorem to “forms” with arbitrary real exponents},
journal = {J. Pure Appl. Algebra.},
volume = {212},
number = {12},
pages = {2612-2622},
year = {2008},
issn = {0022-4049},
doi = {https://doi.org/10.1016/j.jpaa.2008.04.006},
url = {https://www.sciencedirect.com/science/article/pii/S002240490800090X},
author = {Delzell, C.~N.},
}

@article{ChandrasekaranShah,
author = {Chandrasekaran, V. and Shah, P.},
title = {Relative Entropy Relaxations for Signomial Optimization},
journal = {SIAM J. Optim.},
volume = {26},
number = {2},
pages = {1147-1173},
year = {2016},
doi = {10.1137/140988978},
}

@inproceedings{PowersReznick2006,
  author    = {Powers, V. and Reznick, B.},
  title     = {A Quantitative {P}ólya’s Theorem with Corner Zeros},
  booktitle = {Proceedings of the 2006 International Symposium on Symbolic and Algebraic Computation (ISSAC)},
  editor    = {Dumas, J.-G.},
  year      = {2006},
  pages     = {285--290},
  publisher = {ACM Press},
}

@misc{WangJainiYuPoupart,
      title={A {P}ositivstellensatz for Conditional {SAGE} Signomials}, 
      author={Allen Houze Wang and Priyank Jaini and Yaoliang Yu and Pascal Poupart},
      year={2020},
      eprint={2003.03731},
      archivePrefix={arXiv},
}

@misc{telen2022introductiontoricgeometry,
      title={Introduction to Toric Geometry}, 
      author={Simon Telen},
      year={2022},
      eprint={2203.01690},
      archivePrefix={arXiv},
}

@book{CoxLittleSchenck,
  title        = {Toric Varieties},
  author       = {Cox, D.~A. and Little, J.~B. and Schenck, H.~K.},
  series       = {Graduate Studies in Mathematics},
  volume       = {124},
  year         = {2011},
  publisher    = {American Mathematical Society},
  address      = {Providence, RI},
  isbn         = {978-0-8218-4819-7},
  doi          = {10.1090/gsm/124},
  pages        = {841},
}

@article{SturmfelsTelek,
author = {Sturmfels, B. and Telek, M.~L.},
 title={Copositive geometry of {F}eynman integrals}, 
journal = {Lett. Math. Phys.},
volume = {115},
number = {74},
year = {2025}
}

@book{Ziegler,
AUTHOR = {Ziegler, G\"unter M.},
     TITLE = {Lectures on polytopes},
    SERIES = {Graduate Texts in Mathematics},
    VOLUME = {152},
 PUBLISHER = {Springer-Verlag, New York},
      YEAR = {1995},
}

@book{TelenBook,
author = {Telen, S.},
title = {Applied toric geometry},
year = {2025},
note={Draft version available at \\ https://sites.google.com/view/simontelen/teaching}
}

@article{Borinsky,
      author = {Borinsky, Michael},
 title = {Tropical {Monte} {Carlo} quadrature for {Feynman} integrals},
 fjournal = {Annales de l'Institut Henri Poincar{\'e} D. Combinatorics, Physics and their Interactions},
 journal = {Ann. Inst. Henri Poincar{\'e} D, Comb. Phys. Interact.},
 volume = {10},
 number = {4},
 pages = {635--685},
 year = {2023},
}

@article{Bruns,
    AUTHOR = {Bruns, W.},
     TITLE = {Polytope volume in {N}ormaliz},
   JOURNAL = {S\~ao Paulo J. Math. Sci.},
  FJOURNAL = {S\~ao Paulo Journal of Mathematical Sciences},
    VOLUME = {17},
      YEAR = {2023},
    NUMBER = {1},
     PAGES = {36--54},
}

@article{Krivine,
      title={Anneaux préordennés}, 
      author={Krivine, J.~L.},
      year={1964},
      journal={J. Analyse Math.},
    volume={12},
    pages={307–326}
}

@article{HennRaman,
title = {Positivity properties of scattering amplitudes},
author = {Henn, J. and Raman, P.},
journal = {J. High Energ. Phys.},
year = {2025},
volume = {150}
}

@article{HennSmirnovSmirnov,
title = {Evaluating single-scale and/or non-planar diagrams by differential equations},
author = {Henn, J. and Smirnov, A.~V.  and Smirnov, V.~A.},
journal = {J. High Energ. Phys.},
year = {2014},
volume = {88}
}

@article{ConradiFeliuWiuf,
      title={Identifying parameter regions for multistationarity}, 
      author={Conradi, C. and Feliu, E. and Mincheva, M. and Wiuf, C.},
      year={2017},
      journal={PLOS Comput. Biol.},
    volume={13},
number = {10}
}

@book{Weinzierl,
title = {Feynman Integrals: A Comprehensive Treatment for Students and Researchers},
  journal = {UNITEXT for Physics},
  publisher = {Springer International Publishing},
  author = {Weinzierl,  Stefan},
  year = {2022}
}

@article{CastlePowersReznick2009,
  author  = {Castle, M. and Powers, V. and Reznick, B.},
  title   = {A Quantitative {P}ólya’s Theorem with Zeros},
  journal = {J. Symb. Comput.},
  volume  = {44},
  number  = {9},
  pages   = {1285--1290},
  year    = {2009}
}

@book{Scheiderer2024,
  title = {A Course in Real Algebraic Geometry: Positivity and Sums of Squares},
  series = {Graduate Texts in Mathematics},
  publisher = {Springer International Publishing},
  author = {Scheiderer,  C.},
  year = {2024}
}

@article{Bihan2002,
  title = {Viro Method for the Construction of Real Complete Intersections},
  volume = {169},
  ISSN = {0001-8708},
  url = {http://dx.doi.org/10.1006/aima.2001.2059},
  DOI = {10.1006/aima.2001.2059},
  number = {2},
  journal = {Adv. Math.},
  publisher = {Elsevier BV},
  author = {Bihan,  F.},
  year = {2002},
  month = aug,
  pages = {177–186}
}

@article{Burgdorf2012,
  title = {Pure states,  nonnegative polynomials and sums of squares},
  volume = {87},
  ISSN = {1420-8946},
  url = {http://dx.doi.org/10.4171/CMH/250},
  DOI = {10.4171/cmh/250},
  number = {1},
journal = {Comment. Math. Helv.},
  fjournal = {Commentarii Mathematici Helvetici},
  publisher = {European Mathematical Society - EMS - Publishing House GmbH},
  author = {Burgdorf,  S. and Scheiderer, C. and Schweighofer,  M.},
  year = {2012},
  month = jan,
  pages = {113–140}
}


\end{document}